\pdfoutput=1
\newif\ifconf
\conftrue

\newif\ifsub
\subtrue

\ifconf
\documentclass[11pt,hidelinks]{article}
\usepackage{fullpage}
\else
\documentclass[12pt]{amsart}
\fi 

\input{macros.tex}

\title{\textbf{On the Pseudo-Mixing of Kac's Walk}} 

\author{Natesh S. Pillai\\Harvard
\and Aaron Smith\\uOttawa and TIMC
\and Vinod Vaikuntanathan\\MIT}

\begin{document}
\maketitle

\begin{abstract}

We study the pseudo-mixing properties of Kac's walk, a popular Markov chain Monte Carlo algorithm introduced by Hastings in 1970 to sample from the Haar measure on $\So{n}$. The mixing time of the full walk is known to be $n^{2} \, \mathrm{polylog}(n)$ in Wasserstein distance and very recently, in total variation distance (Oliveira, Annals of Applied Probability, 2009; and Pillai and Smith, Annals of Probability, 2018, and Arxiv, 2026).  

However, algorithms based on replacing Haar-distributed matrices with ones generated by Kac's walk for a mere $n \, \mathrm{polylog}(n) \ll n^{2}$ steps (which we will refer to as Kac matrices) have enjoyed strong empirical success achieving near-quadratic runtime speedup (e.g. Ailon and Chazelle, STOC 2006; and Jain et al., Annals of Applied Probability, 2022). Indeed, a recent work (Vaikuntanathan and Zamir, SODA 2025) conjectured that such matrices are {\em computationally indistinguishable} from Haar-random matrices, meaning that any polynomial-time algorithm using Haar matrices would work just as well using Kac matrices.

We show two results supporting this conjecture:

\begin{itemize}
    \item First, we prove a conjecture of Oliveira that the first $k$ columns of Kac's walk mix in time $nk \, \mathrm{polylog}(n)$ uniformly in $n$ and $k$;
    \item Secondly, we use this and a representation-theoretic argument to show that low-degree polynomials cannot distinguish Kac matrices from Haar matrices. In particular, degree-$k$ polynomials fail to distinguish $nk^2\, \mathrm{polylog}(n)$-step Kac matrices from Haar matrices.
\end{itemize}

As an illustrative application, we also show that our second result immediately implies that Kac's walk can be used as a plug-in replacement for Haar matrices in the fast Johnson-Lindenstrauss algorithm, providing a quick proof of a conjecture of Ailon and Chazelle (STOC 2006).  

We believe our results may be of independent interest to MCMC practitioners: they suggest that the ``computational" mixing time is drastically smaller than the mixing time in ``traditional" norms. While it is well-known that this phenomenon is \textit{possible} (Rabinovich, Ramdas, Jordan and Wainwright, Bayesian Analysis, 2020), this is the first illustration of a drastic difference for useful functions of a non-toy chain that we are aware of. 

\end{abstract}

\ifsub
\else 

\newpage 

\pagenumbering{roman}
\tableofcontents

\newpage 

\thispagestyle{empty}
\section*{TODOs}

\begin{itemize}
    \item Compare to the results in the quantum literature for $SU(n)$. My sense is that they also involve bounds on moments of monomials in the entries of $X$, and also involve representation theory, so even a somewhat superficial comaparison seems like a must.

    \item Applications: What about other applications of Kac walk, other than JL?  Do any of them follow immediately from our polynomial indistinguishability result?  \vnote{I think at this point, we should say that the application to JL is an illustrative application of our result. Others very likely exist, and will be explored in future work.}

    \item [DONE] Open question: can the $k$-column mixing result be extended to TV and not just Wasserstein? What is the bottleneck?

    \item \st{Open question: why are we restricted to poly(log n) degree polynomials? Since any distinguisher can be written as a general polynomial (maybe multilinear suffices? so degree at most $n^2$?) extending it all the way is infeasible.  So, what is the limit of the current technique?}

    \item [DONE] Open question: low-degree polynomial functions on the spectrum?  Cf. discussion with Aaron over email.

    \item \st{To discuss / ref: We should discuss this rather recent paper: https://arxiv.org/pdf/2601.05850 In particular, its relation to our appendix E.}

\end{itemize}

\newpage 
\fi

\pagenumbering{arabic}

\section{Introduction}

In a landmark paper, Hastings~\cite[Section~3]{hastings70} introduced a discrete-time random walk $\{X_t\}_{t\geq 0}$ on $\So{n}$,\footnote{$\So{n}$ is the special orthogonal group, the set of all matrices in $\mathbb{R}^{n\times n}$ with determinant $1$.} now called \textit{Kac's walk}, that evolves by the rule
\begin{equation} \label{EqKacWalkDef}
X_{t+1} = R_{i_t j_t}(\theta_t)\, X_t,
\end{equation}
where
\[
(i_t,j_t) \in \{(i,j):1\le i<j\le n\}
\]
are chosen uniformly and independently, \(\theta_t\) are sampled from the uniform distribution on $[0,2\pi)$, and 
\[
R_{ij}(\theta)
=
I_n
+
(\cos\theta-1)(E_{ii}+E_{jj})
+
\sin\theta\, E_{ij}
-
\sin\theta\, E_{ji}
\]
is simply rotation by angle $\theta$ in the $(i,j)$-coordinate hyperplane. That is, starting from a fixed matrix $X_0$, the random process repeatedly picks uniformly random rows $i_t$ and $j_t$ at time $t$, performs a random rotation in the two-dimensional space they span, and replaces rows $i_t$ and $j_t$ with their rotated versions.

It is straightforward to check that, for any $1 \leq k \leq n$, the projection of Kac's walk to the first $k$ columns is again a Markov chain. This is called the $k$-column walk. The special case $k=1$, which gave the walk on $\So{n}$ its name, was introduced in the statistical physics literature in Kac's insightful 1959 paper~\cite{Kac1959ProbabilityRelatedTopics} in an attempt to simplify Boltzmann’s equation.

\paragraph{Previous Work on Mixing Times.}
The stationary distribution of the Kac walk is the Haar measure on $\So{n}$. Many notions of ``mixing times" have been studied extensively for Kac's walk. Early work focused on the relaxation time (\textit{i.e.} inverse of the spectral gap of the transition operator), a rather weak notion of convergence. The first polynomial-time estimate appeared in Diaconis and Saloff-Coste~\cite{DiaconisSaloffCoste2000Kac}, who showed that the relaxation time was $O(n^{3})$. The relaxation time was shown to be $\Theta(n)$ several years later in \cite{CCL03}, and \cite{Mas03} computed the full spectrum of the transition operator. Although Kac's walk is a random walk on a group, it is not conjugation-invariant, and so even complete spectral information does not automatically lead to good estimates of the mixing time in stronger norms. The Wasserstein mixing time was shown to be $O(n^{2} \log(n))$ by Oliveira~\cite{Oliveira_2009}, who also showed that this bound is tight modulo logarithmic factors. Finally, there has been substantial effort to estimate the mixing time in total variation. Diaconis and Saloff-Coste~\cite{DiaconisSaloffCoste2000Kac} proved the first bound, but it was exponential in $n$. The first polynomial-time bound was show  Jiang~\cite{Jiang2017KacSOPolynomial}, who showed the mixing time was $O(n^5 \log^2 n)$. This was improved shortly after by Pillai and Smith~\cite{PillaiSmith2018MixingKacWalk} to $O(n^4 \log n)$, and very recently, in a work concurrent to ours, to the near-optimal $O(n^2 \log n)$ by the same authors~\cite{pillai2026kacswalkrotationmatrices}. 

This paper is focused on the mixing time of the $k$-column walk. In the case $k=1$, the tight mixing bound of $O(n\log n)$ was shown by Pillai and Smith~\cite{pillaiKac17} after a sequence of works~\cite{DiaconisSaloffCoste2000Kac,Jiang2012TotalVariationKac}. To our knowledge, there are no strong bounds on the mixing time for $1 < k \ll n$ in any sense except for the relaxation time.

\paragraph{Our First Result.}
The first result in this paper fits squarely into the large literature on mixing times of Markov chains~\cite{levin2008markov}. It shows that the $k$-column walk mixes in $O(n\log(n) \cdot \max(k,\log(n)))$ steps uniformly in $k$ and $n$ in the Wasserstein metric, resolving a conjecture of \cite[pg.~1229]{Oliveira_2009}. 

\begin{theorem}[Informal]
\label{informal:kwise}
Let $D$ be the Riemannian distance on the Stiefel manifold $\mathrm{St}(n,k)$~\footnote{For $1 \le k \le n$, the Stiefel manifold is
$\mathrm{St}(n,k)=\{ X \in \mathbb{R}^{n\times k} : X^\top X = I_k \}$.} and let $W_D$ be Wasserstein distance with cost $D$. Let $P^T(X)$ denotes the probability distribution induced by running the $k$-column walk for $T$ steps starting with $X \in \mathrm{St}(n,k)$. Then, for every $X,Y\in\mathrm{St}(n,k)$, the Wasserstein distance $W_D\left(P^T(X),P^T(Y)\right)$ is at most $\epsilon$ as long as 
\[
T = \Omega\big(n(k+\log n) \cdot \log (D(X,Y)/\epsilon) \big)~.
\] 
\end{theorem}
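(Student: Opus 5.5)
We will prove the bound with a path-coupling argument in the spirit of Oliveira's proof for the full walk, adapted to the Stiefel manifold. By the triangle inequality for $W_D$ and the fact that $D$ is a geodesic (length) distance, it is enough to get a local contraction estimate. Concretely, we aim to show that there are constants $c,C>0$ such that for every $X \in \mathrm{St}(n,k)$ and every tangent direction (equivalently, every pair $Y$ infinitesimally close to $X$) there is a one-step coupling with
\[
\mathbb{E}\bigl[D(X_1,Y_1)\bigr] \le \Bigl(1 - \frac{c}{n(k+\log n)}\Bigr) D(X,Y) + o(D(X,Y)).
\]
Integrating along a minimizing geodesic and iterating, as in the path-coupling lemma of Bubley--Dyer (in Oliveira's continuous form), then gives $W_D(P^T(X),P^T(Y)) \le (1-c/(n(k+\log n)))^T D(X,Y)$. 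This is at most $\epsilon$ once $T \ge C n(k+\log n)\log(D(X,Y)/\epsilon)$.

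The coupling is the natural one. Both chains use the same pair $(i,j)$. The angle for $Y$ is $\theta+\varphi$, where the shift $\varphi$ is chosen to align the $2\times k$ block of rows $i,j$ of $Y$ with that of $X$ as much as possible. Write $Y = X + \Delta$ with $\Delta$ in the tangent space at $X$, so that $X^\top\Delta$ is antisymmetric. Measure the distance through the Frobenius norm of $\Delta$, or better a weighted norm adapted to the Stiefel geometry. One step changes only rows $i,j$. Choosing $\varphi$ to cancel the component of the rotation generator $E_{ij}-E_{ji}$ acting on the $(i,j)$ block yields an expected squared-norm decrease of order
\[
\frac{1}{\binom n2}\sum_{i<j}\frac{\langle (E_{ij}-E_{ji})X, \Delta\rangle^2}{\|(E_{ij}-E_{ji})X\|^2}.
\]
Here we use that rotating rows $i,j$ preserves the Frobenius norm, so the rotation itself contributes no expansion. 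The task then reduces to a lower bound on this quadratic form in $\Delta$ of order $\|\Delta\|^2/(n(k+\log n))$, after which we convert the squared contraction into contraction of $D$ via Jensen.

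The main obstacle is exactly this lower bound. The weights $\|(E_{ij}-E_{ji})X\|^2 = \|x_i\|^2+\|x_j\|^2$, with $x_i$ the rows of $X$, can be very uneven. For $X$ equal to the first $k$ standard basis columns, most rows vanish, and $\Delta$ directions lying in the normal part (rows $k+1,\dots,n$) can only be corrected through pairs with one index among the few heavy rows. For such extreme configurations the simple one-step bound gives only roughly $1/n^2$ per step; this loss is also what produces the $n^2$ for the full walk. We will first try to fix this with a multi-step or weighted-norm argument. Use a $\Delta$-dependent metric that downweights rows with small $\|x_i\|$; alternatively, run the coupling for $T_0 \approx n\log n$ steps, during which the rows of $X$ delocalize so that $\max_i\|x_i\|^2 \lesssim (k+\log n)/n$ with high probability. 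This uses the one-column mixing result of Pillai--Smith applied column by column together with a union bound, and it is where the $\log n$ enters. On the delocalized set, the quadratic form is comparable to $\frac{1}{n^2}\sum_{i<j}\langle\cdot\rangle^2\,\frac{n}{k+\log n}$. The sum $\sum_{i<j}\langle (E_{ij}-E_{ji})X,\Delta\rangle^2$ equals $\|X\Delta^\top-\Delta X^\top\|_F^2/2$, and for tangent $\Delta$ this is at least $c\|\Delta\|_F^2$. That yields the desired rate $1/(n(k+\log n))$. The remaining bookkeeping handles the bad event of non-delocalization, whose contribution is controlled because $D \le O(\sqrt k)$ is bounded on $\mathrm{St}(n,k)$ and the bad-event probability is polynomially small.
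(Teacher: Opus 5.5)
Your skeleton is the paper's: the same optimal-angle coupling (your $\langle (E_{ij}-E_{ji})X,\Delta\rangle$ is the paper's $q_{ij}$), the same identity $\sum_{i<j}q_{ij}^2=\frac12\|X\Delta^\top-\Delta X^\top\|_F^2$ (Lemma~\ref{lem:row_sum_secant_identity}), and a burn-in of order $n\log n$ to delocalize rows. However, your delocalization step does not reach the scale you need. Applying the one-column result to each of the $k$ columns and union-bounding controls only individual entries, $X_t[i,c]^2\lesssim \log n/n$. That gives only $\max_i\|x_i\|^2\lesssim k\log n/n$, because column marginals say nothing about how the $k$ entries of a row combine. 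With $M\asymp k\log n/n$ the good-step rate is $c/(Mn^2)\asymp 1/(nk\log n)$, so you lose a factor $\log n$ whenever $k\gg\log n$. The paper applies the one-column result to \emph{rows} instead. With $X_t=Q_tX_0$ and $V_0$ the column span of $X_0$, one has $\|r_a(X_t)\|^2=\|P_{V_0}Q_t^\top e_a\|^2$, and $Q_t^\top e_a$ has the law of the one-column walk from $e_a$ (reverse the order of the rotations and negate the angles). Coupling it to a uniform $U$, using the $\chi^2_k$-versus-$\chi^2_n$ tail of $\|P_{V_0}U\|^2$, and union-bounding over the $n$ rows gives $(k+\log(n/\delta))/n$ (Lemmas~\ref{lem:haar_projection_tail} and~\ref{lem:kcol_rowmax_burnin}). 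Alternatively, applying the one-column result to $X_tw$ for $w$ in a $1/2$-net of $S^{k-1}$ also works, with a burn-in of order $n(k+\log n)$.

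Second, the globalization and the bad-event bookkeeping have to be restructured. The good-step rate depends on $\max_i\|x_i\|^2$, and your own example shows it can be $\Theta(n^{-2})$. So there is no one-step bound $\mathbb{E}D(X_1,Y_1)\le(1-c/(n(k+\log n)))D(X,Y)+o(D(X,Y))$ at every point, and nothing can be integrated along a geodesic at the one-step level: a geodesic joining two delocalized points may pass through localized ones, as in the paper's $\mathrm{St}(2,1)$ example. What globalizes is a local bound for the epoch kernel $P^{T_{\mathrm{epoch}}}$, with $T_{\mathrm{epoch}}=T_{\mathrm{burn}}+T_{\mathrm{contr}}$, valid for \emph{every} pair with $D(x,y)\le\rho$. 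This uniformity is available because the burn-in lemma holds from any starting point. The path-coupling lemma needs this bound to be multiplicative in $D(x,y)$. Bounding the distance on the bad event by the diameter instead gives an additive error of order $\delta\sqrt k$ that does not vanish as $D(x,y)\to0$, which breaks your infinitesimal argument. The missing ingredient is that the optimal-angle coupling is deterministically non-expansive in Frobenius norm: the shift minimizes $\|X-R_{ij}(\alpha)Y\|$ and $\alpha=0$ is admissible (Lemma~\ref{lem:one_step_stability}). On bad events the distance is then at most its initial value, so they cost only $\delta\,D(x,y)$. You must also impose the row-norm event on both chains at every step of the contraction phase, with a union bound over its $T_{\mathrm{contr}}$ steps (Lemma~\ref{lem:B1_multiplicative}). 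The output is $n^{-\lfloor T/T_{\mathrm{epoch}}\rfloor}D(X,Y)$ with $T_{\mathrm{epoch}}\lesssim n(k+\log n)\log n$, so at least one full epoch is always needed. It is not the per-step rate $(1-c/(n(k+\log n)))^T$ that you state.
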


When $k=n$, the $k$-column chain is the usual Kac walk on $\So{n}$ and the corresponding Wasserstein contraction estimate matches the one proved in \cite{Oliveira_2009}. Furthermore, for every $k$ and $n$, our bound is essentially optimal since $\Omega(nk)$ steps is absolutely necessary for convergence~\cite{Oliveira_2009}. We refer the reader to 
Theorem \ref{MainThm:kcol} for a formal statement, and to Section~\ref{SecContraction} for its proof.

\paragraph{Pseudo-Mixing and Our Main Result.}
While Kac's walk on $\So{n}$ does not mix in any traditional sense in $o(n^2)$ steps, it does seem to produce ``random enough'' matrices after much fewer, e.g. $n \cdot (\log n)^{O(1)}$ steps. 
A major consequence is that you get matrices that look random yet admit fast matrix operations; for example, a $T$-step Kac matrix admits matrix-vector multiplication in $O(T)$ time.

This phenomenon was first conjectured by Ailon and Chazelle~\cite{FastKacConj} in their groundbreaking work on fast Johnson-Lindenstrauss (JL) transforms. Recall that JL is a dimensionality reduction technique where, given a set $S$ of $N$ points in $\mathbb{R}^n$, one applies a Gaussian (or scaled Rademacher) matrix $A \in \mathbb{R}^{\ell \times n}$ to each $x\in S$ so that 
$$ (1-\epsilon) ||x-y||_2 \leq ||Ax-Ay||_2 \leq (1+\epsilon) ||x-y||_2$$
for all $x,y\in S$.
Remarkably, this can be achieved when $\ell$  is only  $\Theta(\log N / \epsilon^2)$. While a na{i}ve application of the JL lemma requires runtime $O(n\ell)$ for multiplication of $x$ by $A$, Ailon and Chazelle came up with several ways to reduce the runtime to $\tilde{O}(n)$, nearly the time required to just read the input $x$, by constructing the matrix $A$ cleverly. Curiously, they also propose an alternative construction, setting the matrix $A$ to be (the projection to the first $\ell$ rows of) a matrix resulting from Kac's walk after $T = O(n\log n + (\log N / \epsilon)^{O(1)})$  steps, and conjecture it to be ``at least as good as the one described in [their] paper, yet much more
elegant''. Indeed, matrix-vector multiplication with such a matrix can be accomplished in $O(T)$ time, much faster than $O(n\log N / \epsilon^2)$ guaranteed by vanilla JL. This conjecture was essentially resolved nearly 15 years later by Jain, Pillai, Sah, Sawhney and Smith~\cite{jain2020fastmemoryoptimaldimensionreduction}.

Yet, the phenomenon, that we will refer to as {\em pseudo-mixing}, appears to be much more general than that. In a recent work, Vaikuntanathan and Zamir~\cite{vaikuntanathan2025improvingalgorithmicefficiencyusing} conjectured that the Kac walk after $n\cdot (\log n)^{O(1)}$ steps is computationally indistinguishable from a ``uniformly random'' orthogonal matrix, namely from the Haar measure on $\So{n}$. That is, no polynomial-time algorithm can tell the two distributions of matrices apart. They relied on this conjecture to construct {\em trapdoored matrices}, distributions over $n \times n$ matrices which look (Haar-)random to any polynomial-time algorithm, yet admit near-linear time matrix-vector multiplication (and many other matrix operations as well). The computational indistinguishability here is crucial: they also show via a simple counting argument that truly (Haar-)random matrices (and ones that are TV- or Wasserstein-close to them) cannot possibly admit subquadratic time matrix-vector multiplication.

If the VZ conjecture is true, it says a far more general statement than the one predicted by Ailon and Chazelle. Not only is the Kac matrix (after a near-linear number of Kac steps) good enough for JL, it is good enough for {\em any polynomial-time computable} application. 

It is hopeless to prove the conjecture of \cite{vaikuntanathan2025improvingalgorithmicefficiencyusing} in its full generality as it would imply that $P \neq NP$, and even that cryptographic one-way functions exist unconditionally. Short of that, we could attempt to prove the conjecture against restricted classes of distinguishers. In this work, we consider the family of low-degree polynomials over $\mathbb{R}$, drawing from an illustrious line of work in the algorithmic statistics literature (see the thesis of Hopkins~\cite{hopkinsSOS} and the survey of Wein~\cite{wein2025computationalcomplexitystatisticsnew} for an extensive historical perspective.)

Our second, and main, result, is thus about Kac's walk on all of $\So{n}$ and says, roughly, that a certain class of test functions, namely (poly-logarithmically) low-degree polynomials, cannot distinguish between a sample from the Haar measure on $\So{n}$ and a sample obtained by running Kac's walk for $T \approx n \, \mathrm{polylog}(n)$ steps. This resolves an open problem posed by Vaikuntanathan and Zamir~\cite{vaikuntanathan2025improvingalgorithmicefficiencyusing}.

\begin{theorem}[Informal] \label{informal:MainThm} 
Polynomials over $\mathbb{R}$ of total degree at most $k=k(n)$ cannot distinguish between a sample from the Haar measure and a sample obtained by running Kac's walk for $T(n) = \tilde{O}(nk^2)$ steps.
\end{theorem}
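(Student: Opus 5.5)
Throughout, "cannot distinguish" is taken to mean the following. For every polynomial $f$ of degree at most $k$ on $\mathbb{R}^{n\times n}$, normalized so that $\mathrm{Var}_{\mathrm{Haar}}(f)=1$, we have $|\mathbb{E}[f(X_T)]-\mathbb{E}_{\mathrm{Haar}}[f]|\le \epsilon$. The plan combines a representation-theoretic reduction with the $k$-column Wasserstein contraction of Theorem~\ref{informal:kwise} (formally Theorem~\ref{MainThm:kcol}).

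\textbf{Step 1: representation-theoretic reduction.} The space $\mathcal{P}_k$ of polynomials of degree $\le k$ in the entries of $X\in\So{n}$ is invariant under both left and right translation. The Kac operator $P$ acts by left multiplication, $Pf(X)=\mathbb{E}[f(R X)]$, so it preserves $\mathcal{P}_k$ and commutes with right translations. Decompose $f-\mathbb{E}_{\mathrm{Haar}} f$ into matrix coefficients $X\mapsto \langle u,\rho(X)v\rangle$ of the irreducibles $\rho$ occurring in $V^{\otimes j}$ for $j\le k$, where $V=\mathbb{R}^n$. By right invariance we can rotate the vector $v$ freely. Every such coefficient can then be written as a degree-$\le k$ polynomial in the entries of $X$ that involves only $k$ columns, say the first $k$. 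Concretely, the tensor $\rho(X)v$ with $v\in V^{\otimes j}$ depends on $X$ only through $Xv$, and any $v$ lies in a right-translate of the span of tensors built from $e_1,\dots,e_k$. Hence it suffices to control $\mathbb{E} g(X_T)-\mathbb{E}_{\mathrm{Haar}} g$ for polynomials $g$ of degree $\le k$ in the first $k$ columns. This is exactly a function of the $k$-column walk.

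\textbf{Step 2: apply Wasserstein contraction.} For such $g$, couple the $k$-column walk started at $X_0$ with one started from stationarity, using the coupling of Theorem~\ref{MainThm:kcol}. This gives
\[
|\mathbb{E} g(X_T)-\mathbb{E}_{\mathrm{Haar}} g|\le \mathrm{Lip}_D(g)\, W_D(P^T(X_0),\pi_k)\le \mathrm{Lip}_D(g)\,\mathrm{diam}\, e^{-cT/(n(k+\log n))}.
\]
We then need a Lipschitz bound on $g$ over $\mathrm{St}(n,k)$ in terms of its Haar $L^2$ norm. Entries of Haar columns are of size about $n^{-1/2}$. A degree-$k$ polynomial with unit Haar variance can have coefficients as large as $n^{O(k)}$, and a Markov/Bernstein-type inequality on the compact manifold gives $\mathrm{Lip}(g)\le n^{O(k)}\|g\|_{L^2(\mathrm{Haar})}$. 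Such an inequality is available for spherical harmonics and should extend to the Stiefel manifold through the bound on the eigenvalues of the Laplacian on $\mathcal{P}_k$ together with Sobolev embedding in dimension $nk$.

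\textbf{Step 3: choose $T$.} Take $T\gtrsim n(k+\log n)\cdot(k\log n+\log(1/\epsilon))$. Then $n^{O(k)}e^{-cT/(n(k+\log n))}\le\epsilon$. The result is $T=\tilde O(nk^2)$, as claimed.

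\textbf{Main obstacle.} The hardest step is Step 2's norm-equivalence, which converts the $L^2$-normalization into a sup or Lipschitz bound of size only $n^{O(k)}$. A crude Sobolev embedding on a manifold of dimension $nk$ costs $n^{O(nk)}$, which would ruin the bound. I would avoid it with a hypercontractivity argument. The Stiefel manifold $\mathrm{St}(n,k)$ is a homogeneous space with Ricci curvature of order $n$, so the heat semigroup gives $\|g\|_\infty\le e^{O(k\log n)}\|g\|_2$ for $g$ in the span of the low eigenspaces. The gradient is handled the same way, since $\nabla g$ is again built from degree-$\le k$ polynomials. A secondary subtlety is the reduction in Step 1: it must use only $k$ columns, not $jk$, which holds because $j\le k$ tensor factors use at most $k$ distinct vectors.
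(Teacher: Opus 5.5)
Your Step~1 rests on a false claim, and it is exactly where the difficulty lives. A matrix coefficient $X\mapsto\langle u,X^{\otimes j}v\rangle$ depends only on $k$ columns of a right translate of $X$ when $v\in W^{\otimes j}$ for some $k$-dimensional subspace $W$, for example when $v$ is a pure tensor. A general $v\in(\mathbb{R}^n)^{\otimes j}$ is not of this form. The union of all such $W^{\otimes j}$ has dimension at most $k(n-k)+k^j$, far below $n^j$ once $j\ge2$ and $k\ll n$. Your closing remark that ``$j\le k$ tensor factors use at most $k$ distinct vectors'' holds only for pure tensors.

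Concretely, take $X\mapsto\sum_{i=1}^n c_iX_{1i}X_{2i}=\langle e_1\otimes e_2,\,X^{\otimes 2}\sum_i c_i\,e_i\otimes e_i\rangle$ with distinct $c_i$. This is a single degree-$2$ matrix coefficient, yet it is not a function of the restriction $X|_W$ for any subspace $W$ with $\dim W\le n-2$. So the orthogonal decomposition into matrix coefficients, which is what would have controlled the pieces by $\|f-\mathbb{E}f\|_{L^2}$, does not produce $k$-column functions. Conversely, decompositions into $k$-column pieces (e.g.\ into monomials) come with no a priori norm control.

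The repair is the paper's route. Expand $f$ in monomials; each involves at most $k$ columns and is bounded by $1$ on $\mathrm{SO}(n)$. Control each monomial by the $k$-column Wasserstein contraction plus telescoping (Theorem~\ref{LemmaDiffUpperBoundMonomial}). No Lipschitz-from-$L^2$ estimate or hypercontractivity is needed for this.

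What must then be supplied, and what your proposal never addresses, is a bound on the total coefficient mass by $n^{O(k)}\sqrt{\mathrm{Var}_{\mathrm{Haar}}(f)}$. This fails for an arbitrary representative because of cancellation on $\mathrm{SO}(n)$: $\sum_{i,j}X_{ij}^2-n$ has large coefficients and zero variance. The paper therefore passes to the reduced representative with coefficient tensor $T^\sharp=\mathrm{Proj}_{\mathcal C^\perp}T$. It then proves by Schur orthogonality that $\mathrm{Var}_{\mathrm{Haar}}(f)=\sum_{\lambda\ne\mathrm{triv}}d_\lambda^{-1}\|B_\lambda\|_{\mathrm{HS}}^2\ge(\dim\mathcal U)^{-1}\sum_{\lambda\ne\mathrm{triv}}m_\lambda^{-1}\|B_\lambda\|_{\mathrm{HS}}^2=(\dim\mathcal U)^{-1}\|T^\sharp\|_{\mathrm{HS}}^2$ (Theorems~\ref{thm:compact-group-variance} and~\ref{ThmVarLowerBound}). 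Cauchy--Schwarz then bounds the $\ell^1$ coefficient norm by $n^{O(k)}$ times the standard deviation, and this factor is absorbed once $T=\omega(nk(k+\log n)\log n)$.

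Separately, the $L^\infty$--$L^2$ comparison you worry about needs no curvature input. For a finite-dimensional left-invariant space $\mathcal H$ of functions on $\mathrm{SO}(n)$, such as your $\mathcal P_k$, the diagonal of the reproducing kernel is constant (equal to $\dim\mathcal H$). Hence $\|h\|_\infty\le\sqrt{\dim\mathcal H}\,\|h\|_{L^2(\mathrm{Haar})}$. The operator $Pf(X)=\mathbb{E}f(RX)$ is positive, self-adjoint, preserves $\mathcal P_k$, and has the $\Theta(n)$ relaxation time quoted in the introduction. Combining these facts would give a route that avoids the column reduction altogether.
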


We refer the reader to 
Theorem \ref{MainThm} for a formal statement, and to Section~\ref{SecMainThmProof} for its proof.

While this result may at first sight appear to contradict previous lower bounds on the mixing time of Kac's walk, such as the lower bound of $\Omega(n^{2})$ for mixing in Wasserstein due to \cite{Oliveira_2009}, it does not. The difference, of course, comes from the test functions being studied. Most of the literature on mixing times measures mixing using ``classical" test functions such as all bounded functions (total variation) and all Lipschitz functions (Wasserstein), while we follow \cite{hopkinsSOS,wein2025computationalcomplexitystatisticsnew} and use low-degree polynomials. Low-degree polynomials are not sufficient for all applications, but as we discuss in Section~\ref{SecApp}, they are good enough for many.

Following the low-degree heuristic championed by \cite{hopkinsSOS}, one could also informally interpret this as saying that it is computationally difficult for any ``natural'' polynomial-time algorithm to distinguish between Haar and Kac matrices. However, such a statement should be made with some care as recently demonstrated by \cite{buhai2025quasipolynomiallowdegreeconjecturefalse}; see also \cite{DBLP:conf/stoc/HsiehKKLMT26} for a more nuanced perspective of when predictions made by the low-degree heuristic can be turned into theorems.

\paragraph{Broader Perspective: Pseudo-Mixing as a General Phenomenon.}
While  Theorem \ref{MainThm} only applies to a single Markov chain, we believe our results are of broader interest. This is both because the Kac walk itself is of great interest, and because this is (to our knowledge) the first example of a phenomenon that would be of interest to the Markov chain Monte Carlo (MCMC) community:

\begin{itemize}

\item \textbf{MCMC:} The Markov chain studied here was the main worked example in the first major paper on MCMC \cite{hastings70}. The last sentence of that paper asked, essentially, the question we are trying to answer in Theorem \ref{MainThm}: {\em for what functions on $\So{n}$ do we expect this MCMC chain to be ``efficient"}? In light of previous work on this walk \cite{Oliveira_2009}, we believe the answer of this paper to be perhaps a bit surprising: the walk ``looks random" to certain computationally-bounded observers long before it ``looks random" according to most metrics on probability measures.

Beyond the historical interest in Kac's walk, we believe this result is qualitatively interesting to MCMC practitioners. It is well-known that mixing times can give conservative estimates for the convergence of specific functions of Markov chains \cite{Rabinovichfctn}, but most mathematically tractable examples appear to be ``toy" functions. Even when the phenomenon was empirically noticed for practical MCMC algorithms, it appeared to mostly require rather special functions. This is one of very few situations in which the usual mixing time is known to be very conservative for {\em natural and important} functions of a {\em natural} Markov chain. Having found one such example, we expect there to be many others. 

\item \textbf{Mathematical Techniques:} 
Our paper also includes nontrivial representation-theoretic arguments that are used to lower-bound the variances of polynomials on $\So{n}$ (see Section \ref{SecRepTheoryVariance}). These results may be of independent interest, as moment calculations on manifolds are generally difficult \cite{collins_weingarten_survey}. Such variance lower-bounds are necessary for one major approach to computational hardness \cite{wein2025computationalcomplexitystatisticsnew}, and could be used by researchers working on computational hardness against low-degree polynomials in constrained state spaces such as $\So{n}$.



\end{itemize}

\subsection{Applications} \label{SecApp}

\paragraph{Machine Learning and Fast Matrix Calculations.} \label{SecAppML}
As discussed in \cite{vaikuntanathan2025improvingalgorithmicefficiencyusing}, computational indistinguishability of $T$-step Kac walk for $T \ll n^2$ can be used in algorithmic applications. Informally, the heuristic is: if you can't ``easily'' tell the difference between a target distribution $\mu$ and an easier-to-sample distribution $\nu$, then surely it must be ``safe'' to replace $\mu$ by $\nu$ in any ``reasonable'' statistical or machine learning workflow. 


It is sometimes possible to upgrade this heuristic to a precise theorem. As an illustrative example, we show that Theorem \ref{MainThm} almost immediately implies that Kac's walk can, by itself, be used to give a fast Johnson-Lindenstrauss transform (see Theorem \ref{thm:kac-jl-transform}), providing a short proof of a conjecture of \cite{FastKacConj} who suggested this approach but did prove any formal guarantees. We note that our proof is \textit{inspired} by the computational hardness heuristic of \cite{vaikuntanathan2025improvingalgorithmicefficiencyusing}, but is not a heuristic argument. 

We also note that the algorithm in \cite{FastKacConj} is no longer the state-of-the-art fast Johnson-Lindenstrauss transform, and our theorem is no longer the state-of-the-art result. In the 20 years since \cite{FastKacConj}, there has been substantial work on developing slightly faster transforms \cite{AilonLiberty2013FastJL,DasguptaKumarSarlos2010SparseJL,KrahmerWard2011RIPJL,KaneNelson2014SparserJL,BambergerKrahmer2021OptimalFastJL,Fandina2023FastJLEvenFaster,HouenThorup2023FastHashing}, and the conjecture of \cite{FastKacConj}  was more-or-less proved in \cite{jain2020fastmemoryoptimaldimensionreduction}. We nonetheless view Theorem \ref{thm:kac-jl-transform} as interesting, since it shows how to obtain nontrivial results as a near-immediate consequence of a general bound on the computational mixing time.  


We suspect that this line of thinking is fruitful in a much broader set of applications and believe that it could be a modular source of speed-ups in various machine learning algorithms.
 

\paragraph{Cryptography.}
 A trapdoored matrix~\cite{sotirakitrap,vaikuntanathan2025improvingalgorithmicefficiencyusing,DBLP:conf/tcc/BravermanN25} consists of a public $n$-by-$n$ matrix $M$, as well as a private trapdoor function $P_{M}$ with the following properties, for some $0 < \varepsilon \ll 1$. 

\begin{enumerate}
    \item We have $P_{M}(v) = Mv$ for all vectors $v$.
    \item For any $v$, we can compute $P_{M}(v)$ in $O(n^{1 + \varepsilon})$ steps; however,
    \item For a computationally bounded adversary, $M$ is indistinguishable from a random matrix, and consequently, by a simple counting argument, a polynomial-time adversary cannot come up with a $o(n^2)$-size circuit that computes $v \mapsto Mv$. 
\end{enumerate}

%


Sotiraki~\cite{sotirakitrap} implicitly defined this notion and used it to construct an authentication protocol where the honest parties (the authenticators) run in $\tilde{O}(n)$ time, but no $o(n^2)$-time adversary can impersonate them. Her construction used  matrices that result from a {\em transvection walk} over a finite field $\mathbb{F}$, a process that is very similar to Kac's walk. A transvection walk~\cite{DiaconisSaloffCoste1996GeneratingSetsAbelian} is one where, in each step, one chooses rows $i$ and $j$ randomly and performs a random invertible linear transformation to them. This forms a Markov chain over $GL_n(\mathbb{F})$, similar to how Kac's walk works over $\So{n}$. She proved that each column of the resulting matrix, after a $\tilde{O}(n)$-step walk, is close to uniformly random over $\mathbb{F}^n$. These results have since then been improved, and much is known about the transvection walk by now~\cite{BenHamouPeres2018StratifiedHypercube,BenHamou2025MatrixTransvections,pillai2026mixingkcolumnstransvection}.

Vaikuntanathan and Zamir, and independently, Braverman and Newman~\cite{vaikuntanathan2025improvingalgorithmicefficiencyusing,DBLP:conf/tcc/BravermanN25} formally defined trapdoored matrices. 
\cite{vaikuntanathan2025improvingalgorithmicefficiencyusing} used a construction based on the Kac walk (conjecturing that the Kac matrix is computationally indistinguishable from a Haar matrix) and used it to speed up algorithms that use random matrices (as discussed above) as well as to show improved worst-case to average-case reductions for matrix multiplication over finite fields.  This latter application can be ported to the case of reals as well, as we briefly discuss below.  Given a black-box that multiplies two Haar-random matrices $A$ and $B$ in $\So{n}$ correctly with probability $\epsilon > 0$ (here, we ignore finite-precision issues), one can use it to compute $AB$ for {\em any} two matrices $A$ and $B$. Pick three Kac matrices $M_1, M_2$ and $M_3$. Feed the box with $A' = M_1 A M_2$ and $B' = M_2^{-1}BM_3$. The black-box returns a matrix $C'$, which if it did its job correctly, will be $M_1 AB M_3$. One can recover $AB$ from this by multiplying on the left by $M_1^{-1}$ and on the right by $M_3^{-1}$. Here, we use an additional property of Kac matrices: multiplication by a Kac matrix as well as by its inverse can be done in $\tilde{O}(n^2)$ time. One also crucially uses the fact that the correctness of the box, namely whether $C' = A'B'$ can be checked in $O(n^2)$ time using (a variation of) Frievald's checker~\cite{freivalds1979fast}. Note that this worst-case to average-case reduction is very different from the usual \cite{DBLP:conf/stoc/BlumLR90} type additive randomization over finite fields that, unfortunately, does not work over the reals. The consequence of our Theorem~\ref{MainThm} to this reduction is less clear, and we leave an in-depth exploration to future work. 

We also mention that variations of the cryptographic applications of \cite{sotirakitrap,DBLP:conf/tcc/BravermanN25} can also be implemented using Kac walk over $\So{n}$ as opposed to the transvection walk over a finite field, but leave the details to future work.

\subsection{Other Related Work}

The notion of a unitary $k$-design from quantum information asks for a distribution over unitary matrices in $SU(N)$ (where $N=2^n$, $n$ being the number of qubits in question, is the dimension of the underlying Hilbert space) whose $k$-th moments match those of a Haar-random matrix over $SU(N)$. This is very close to what we study in this paper. Indeed, there have been several recent works that construct $k$-designs, the closest to us being the construction based on random quantum circuits~\cite{tan2021approximateunitary3designstransvection,DBLP:conf/focs/ChenHHLMT25}. Random quantum circuits are unitaries defined by composing many local quantum gates, that is, gates that operate on $O(1)$ many qubits. However, the notion of locality here differs a great deal from the notion of locality in Kac's walk. A local unitary gate on $n$ qubits actually acts on many columns of the underlying unitary in $SU(2^n)$. On the other hand, a unitary on $SU(2^n)$ that acts on just two columns (such as in Kac's walk) is not a local gate in the sense that it has to act on many qubits.  Beyond this, the techniques in our work and \cite{DBLP:conf/focs/ChenHHLMT25} seem to be very different.


\paragraph{Acknowledgments and Statement of AI Use.}
ChatGPT 5.5 pro was used throughout the creation of this document. Many of the uses were related to saving time and making the proof structure cleaner, such as propagating changes to notation, or simplifying some of our earlier estimates (\textit{e.g.} Lemma \ref{lem:row_sum_secant_identity}), or drafting straightforward modifications of standard calculations (\textit{e.g.} Lemma \ref{lem:jl_haar_moment}). 
ChatGPT 5.5 Pro provided extensive guidance on the representation-theoretic literature that was most relevant for the calculations in Appendix~\ref{SecRepTheoryVariance}. We also thank Mark Sepanski 
for providing a more experienced set of eyes in reviewing the calculations in Appendix~\ref{SecRepTheoryVariance}. 
The final results are of course the responsibility of the authors.

\paragraph{Guide to the Paper.}
In Section~\ref{SecThmStatements}, we state the formal versions of our theorems and give an outline of the proofs. We prove the main error upper bound in Section \ref{SecContraction}, the matching lower bound in \ref{SecBoundsOnPoly}, and the main computational mixing bound in Section \ref{SecMainThmProof}. Section \ref{SecJL} gives an application to the analysis of fast Johnson-Lindenstrauss transforms. Most proof details are deferred to the appendices; the proofs in the main text are largely intended to show how those arguments can be tied together.

\section{Technical Overview} 
\label{SecThmStatements}

\subsection{Low-Degree Hardness and Pseudo-mixing} \label{SecCompMixingDefs}


Our definitions follow \cite[Definition 3.4]{wein2025computationalcomplexitystatisticsnew}.

\begin{defn} [Distinguishability With Respect to Fixed Sequences] \label{DefIndFunc}
Fix two sequences of distributions $\mu_{n}, \nu_{n}$ that take on values in the set $\mathcal{S}_{n}$, and let $f_{n}$ be a sequence of functions from $\mathcal{S}_{n}$ to $\mathbb{R}$.

We say the sequence $f_{n}$ \textit{weakly} distinguishes $\mu_{n}$ and $\nu_{n}$ if
\be 
\sqrt{\max(\Var_{\mu_{n}}(f_{n}),\Var_{\nu_{n}}(f_{n}))} = O(|\mathbb{E}_{\mu_{n}}[f_{n}] - \mathbb{E}_{\nu_{n}}[f_{n}] |),
\ee 
and \textit{strongly} distinguishes if the LHS is 
$o(|\mathbb{E}_{\mu_{n}}[f_{n}] - \mathbb{E}_{\nu_{n}}[f_{n}] |)$.
\end{defn}

Typically, we are interested in indistinguishability with respect to a large class of adversaries. The most important is the class of low-degree polynomials.

\begin{defn} [Indistinguishability With Respect to Low-Degree Polynomials] \label{DefInd}
Fix a monotone sequence $k(n)$. We say that $\mu_{n},\nu_{n}$ are weakly indistinguishable by degree-$k(n)$ polynomials if, for every sequence of polynomials $f_{n}$ with $deg(f_{n}) \leq k(n)$, $f_{n}$ does not weakly distinguish between $\mu_{n}, \nu_{n}$. The definition for strong indistinguishability is analogous.
\end{defn}

\noindent
This motivates the definitions of computational distance and computational mixing time.

\begin{defn} [Computational Distance]
Fix two distributions $\mu, \nu$ on a state space indexed by a size parameter $n$, and fix a degree parameter $d > 0$. We define
\be 
d_{\mathrm{comp}}(\mu,\nu) = d_{\mathrm{comp}}(\mu,\nu;n,d) \equiv  \sup_{f \in \mathcal{F}_{n,d}} | \mathbb{E}_{\mu}[f] - \mathbb{E}_{\nu}[f]|,
\ee 
where $\mathcal{F}_{n,d}$ is the collection of real polynomials in the ambient coordinates of degree at most $d$ satisfying
\be 
\max(\Var_{\mu}(f),\Var_{\nu}(f)) = 1.
\ee 
\end{defn}
\vnote{I made a major change in the def above. The paraemter $d$ refers to the degree of the polys and not the exponent of the logarithm as it was before... TODO: make sure this is propagated throughout the paper.}

\begin{defn} [Computational Mixing Time, or Pseudo-mixing Time]
Fix a sequence of Markov chains $\{X_{t}^{(n)}\}$ indexed by a size parameter $n$ and with unique stationary measures $\mu^{(n)}$. Fix a degree parameter $d > 0$ and a tolerance $\varepsilon > 0$. Define 
\be 
\tau_{\mathrm{comp}} = \min \{t > 0 \, : \, d_{\mathrm{comp}}(\mathcal{L}(X_{t}^{(n)}), \mu^{(n)}; n,d) \leq \varepsilon \}.
\ee 
where $\mathcal{L}(X_t^{(n)})$ is the probability law corresponding to the random variable $X_t^{(n)}$.
\end{defn}

\begin{remark}
A remark to experts on Markov chain theory: note that the definition of computational distance is very close to the definition of Wasserstein distance: both distances are defined as the largest difference in means across some collection of test functions, with only the test functions differing. Although this might make the notion of computational mixing or pseudo-mixing quite familiar to those with backgrounds in Markov chain theory, the computational distance does not have most of the familiar ``nice" properties of Wasserstein distances. For example, single-step contraction of the computational distance does not imply multi-step contraction.
\end{remark}

\subsection{Statements of Main Theorems}

Our result on the mixing time of the $k$-column walk is as follows.

\begin{thm}\label{MainThm:kcol}
There is a universal constant $C>0$ such that the following holds. For every $1\leq k<n$, let $P$ be the transition kernel of the $k$-column walk on $\mathrm{St}(n,k)$, let $D$ be the Riemannian distance on $\mathrm{St}(n,k)$, and let $W_D$ be Wasserstein distance with cost $D$. Then, for every $T\geq0$ and every pair of starting points $X_0,Y_0\in\mathrm{St}(n,k)$,
\[
W_D\left(P^T(X_0,\cdot),P^T(Y_0,\cdot)\right)
\leq
n^{-\left\lfloor T/\big(Cn(k+\log n)\log n\big)\right\rfloor}D(X_0,Y_0).
\]
\end{thm}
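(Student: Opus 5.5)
The proof will be a path-coupling argument in the style of Oliveira, adapted to the Stiefel manifold $\mathrm{St}(n,k)$. By path coupling (geodesic convexity of $W_D$), it suffices to treat infinitesimally close pairs: $X$ and $Y=\exp_X(\epsilon V)$ with $V$ a tangent vector at $X$. For such pairs we bound the expected distance after $T_0 = Cn(k+\log n)\log n$ steps by $n^{-1}D(X,Y)$, uniformly in $X$ and $V$. Iterating over $\lfloor T/T_0\rfloor$ blocks, using the triangle inequality along geodesics, then gives the stated bound.

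\textbf{Coupling.} Run both chains with the same coordinate pair $(i_t,j_t)$. Choose the angle for $Y$ as $\theta_t+\phi_t$, where $\phi_t$ rotates, inside the $2\times k$ block of rows $\{i_t,j_t\}$, the difference $Y-X$ so as to kill as much of it as a single plane rotation allows. Concretely, write the tangent displacement as $V=XA+X_\perp B$ with $A$ skew-symmetric. A rotation by angle $\phi$ in the $(i,j)$ plane changes the displacement by $\phi(E_{ji}-E_{ij})X$. So I would pick $\phi$ to cancel the component of $V$ along this direction, i.e.\ project the $(i,j)$ rows of $V$ onto the generator $(E_{ji}-E_{ij})X$ restricted to those rows. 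The uncoupled part of the displacement evolves isometrically under the common rotation. Hence one step gives
\[
\mathbb{E}\|V_{t+1}\|^2=\|V_t\|^2-\mathbb{E}\,\frac{\langle V_t,(E_{ji}-E_{ij})X_t\rangle^2}{\|(E_{ji}-E_{ij})X_t\|^2},
\]
where the norm is the Riemannian metric (I will work with the Frobenius-type metric, which is comparable to $D$ up to constants).

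\textbf{Contraction over a block.} The one-step decrement is a weighted average over the pairs $(i,j)$ of the squared row-pair correlations between $V$ and $X$. The key inequality to prove is that, averaged over a block of $T_0$ steps, the fraction removed is at least of order $1/(n(k+\log n))$ per step. Heuristically, each pair $(i,j)$ sees the quantity $\sum_{\ell\le k}(V_{i\ell}X_{j\ell}-V_{j\ell}X_{i\ell})$. This is small relative to $\|V\|^2/n^2$ when the rows of $X$ are very unbalanced: rows of $X$ have squared norm about $k/n$ on average, but individual rows can be nearly $0$ or $1$. The single-step rate can therefore be as bad as $1/n^2$, for example when $X$ is a coordinate frame. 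That matches Oliveira's $k=n$ bound but is too weak here.

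\textbf{Main obstacle.} The hard part is to show that within $O(n\log n)$ steps the frame $X_t$ becomes ``delocalized'', meaning all row norms are $\asymp k/n$ up to $\log n$ factors. It must stay delocalized with high probability, and the contraction rate conditioned on delocalization is then $\gtrsim 1/(n(k+\log n))$. For delocalization I would use the one-column mixing of Pillai--Smith for each column together with a union bound, which is where the $\log n$ in $k+\log n$ enters. For the rate I would lower bound
\[
\sum_{i<j}\frac{\langle V,(E_{ji}-E_{ij})X\rangle^2}{\|X_i\|^2+\|X_j\|^2}
\]
by $c\,\|V\|^2 n/(k+\log n)$, by expanding the square and comparing with $\|XA\|^2+\|X_\perp B\|^2$ using row-norm balance. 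The $X_\perp B$ part is handled by pairs whose rows sit in complementary directions. The remaining difficulty is that the coupling must be maintained along a geodesic path while the delocalization event is only probabilistic. I would handle this by stopping the contraction accounting on the bad event, and bounding $D\le \pi\sqrt{k}$ times its $n^{-C}$ probability. After normalizing by $D(X_0,Y_0)$, this gives the $n^{-1}$ factor per block.
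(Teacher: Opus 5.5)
Your architecture matches the paper's. You use the same coordinate pair with an angle shift that minimizes the post-step distance; your projection is the first-order form of the optimal angle $\widehat\alpha_{ij}$ in \eqref{EqAlphaOptimal}. You then have a good-step contraction of order $1/(Mn^{2})$, with $M$ the largest squared row norm, a burn-in that makes $M\lesssim(k+\log n)/n$, and path coupling iterated over blocks. Your numerator bound needs no row balance. With $V=XA+X_\perp B$ one has exactly $\sum_{i<j}\langle V,(E_{ji}-E_{ij})X\rangle^{2}=\|V\|^{2}-\mathrm{Tr}\bigl((X^{\top}V)^{2}\bigr)=\|V\|^{2}+\|A\|^{2}$ (cf.\ Lemma~\ref{lem:row_sum_secant_identity}), so balance enters only through the denominators. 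Two steps, however, fail as written.

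\textbf{Bad-event accounting.} Bounding the distance on the bad event by the diameter gives $\mathbb{E}\,D(X_{T_0},Y_{T_0})\leq\lambda D(X_0,Y_0)+\pi\sqrt{k}\,n^{-C}$. The additive term does not scale with $D(X_0,Y_0)$, but path coupling needs a purely multiplicative bound for all sufficiently close pairs.
\begin{itemize}
\item For your pairs $D(X_0,Y_0)=O(\epsilon)$, so ``normalizing by $D(X_0,Y_0)$'' produces $\pi\sqrt{k}\,n^{-C}/\epsilon\to\infty$.
\item For the same reason, the final bound $n^{-r}D(X_0,Y_0)$ could never hold for nearby starting points.
\end{itemize}
The missing ingredient is deterministic non-expansion of the coupling. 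Your own one-step identity already gives it pathwise, because the decrement is a square (equivalently, $\phi=0$ is admissible): $\|V_{t+1}\|\leq\|V_t\|\leq\|V_0\|$. Write $\mathcal{L}_t$ for the length of the coupled displacement and $G_t$ for the delocalization event at time $t$. Then $\mathbb{E}[\mathcal{L}_{t+1}\mid\mathcal{F}_t]\leq(1-\kappa)\mathcal{L}_t+\mathcal{L}_0\mathbf{1}_{G_t^{c}}$, and iterating gives $\bigl((1-\kappa)^{T_{\mathrm{contr}}}+\sum_t\mathbb{P}(G_t^{c})\bigr)D(X_0,Y_0)$. This is exactly Lemma~\ref{lem:B1_multiplicative} combined with Lemma~\ref{lem:one_step_stability}. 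Non-expansion is also what carries the distance through the burn-in part of each block, where no contraction is available.

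\textbf{Delocalization rate.} One-column mixing applied to each column of $X_t$, plus a union bound, gives $X_t[a,\ell]^{2}\lesssim\log n/n$ for all $a,\ell$. Summing over a row then gives only $\|X_t\|_{row,\infty}^{2}\lesssim k\log n/n$. Per-column marginals do not see the joint law of the $k$ entries of a row, so the $\log n$ enters multiplicatively rather than additively. That gives $\kappa\asymp1/(nk\log n)$ and blocks of length $nk\log^{2}n$, a factor $\log n$ worse than the statement when $k\gg\log n$.

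The paper controls rows directly. With $X_t=Q_tX_0$ and $V_0$ the column span of $X_0$, one has $\|r_a(X_t)\|^{2}=\|P_{V_0}Q_t^{\top}e_a\|^{2}$. Here $Q_t^{\top}e_a$ is itself a $1$-column Kac walk started at $e_a$, since transposition reverses the order of the rotations and negates the angles, which preserves the law. Couple it to a uniform $U$ on $\mathbb{S}^{n-1}$. Then $\|P_{V_0}U\|^{2}\sim\mathrm{Beta}(k/2,(n-k)/2)$, which exceeds $C(k+s)/n$ with probability at most $e^{-s}$. Union bounds over rows and time steps then give the additive bound $(k+\log n)/n$ (Lemma~\ref{lem:kcol_rowmax_burnin}).
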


The proof is deferred to Section \ref{SecContraction}, where it is an immediate corollary of Theorem \ref{LemmaMainContractionEstimate}.

When $k=n$, if the chain is started in $\So{n}$, the $k$-column chain is the usual Kac walk on $\So{n}$; the corresponding Wasserstein contraction estimate is proved in \cite{Oliveira_2009}.

\medskip\noindent 

Let $\haarOdist_{n}$ denote the Haar measure on $\So{n}$. For $x\in\So{n}$, let $\kacdist_{n,T,x}$ denote the law of Kac's walk at time $T$ started from $x$. Our main theorem is:

\begin{thm}\label{MainThm}
Fix a sequence $k = k(n)$ and a sequence  $T = T(n)$ satisfying  $T(n) = \omega(nk \cdot (k+\log n) \cdot \log n)$, 
$\kacdist_{n,T,x}$ and $\haarOdist_{n}$ are weakly $k(n)$-indistinguishable in the sense of Definition \ref{DefInd}.
\end{thm}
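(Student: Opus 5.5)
Fix a polynomial $f$ of degree at most $k$ on $\So{n}$, normalized so that $\max(\Var_{\haarOdist_n}(f),\Var_{\kacdist_{n,T,x}}(f))=1$. The goal is $|\mathbb{E}_{\kacdist_{n,T,x}}f-\mathbb{E}_{\haarOdist_n}f|=o(1)$. This is in fact stronger than weak indistinguishability, since the numerator in Definition~\ref{DefIndFunc} is then of order one while the mean gap tends to zero.

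The plan is to reduce to the $k$-column walk. Any polynomial of degree $\le k$ in the entries of $X$ is a linear combination of monomials, each of which involves at most $k$ columns. Because Haar measure is invariant under right multiplication by permutation matrices (up to sign), and Kac's walk acts by left multiplication, the projection onto any fixed set $S$ of $|S|\le k$ columns is again a $|S|$-column walk. So Theorem~\ref{MainThm:kcol} applies to every such projection. I would decompose $f=\sum_S f_S$ by the column support of monomials. To avoid blowing up the number of terms, I would organize the decomposition through right-$\mathrm{SO}(n)$ isotypic components instead. The cleaner route is to write $f-\mathbb{E}_{\haarOdist_n}f$ in the basis of matrix coefficients of irreducible representations of degree $\le k$. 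The Kac kernel acts on the left factor only, so $P^T$ preserves each $V_\lambda\otimes V_\lambda^*$ and acts as $Q_\lambda^T\otimes I$. Each matrix coefficient depends on $X$ only through $Xv$ for vectors $v$ in a $k$-fold tensor. After a right rotation this is a function of the first $k$ columns.

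For the mean bound, I would use Kac's walk started at $x$ and at Haar, coupled through Theorem~\ref{MainThm:kcol} applied to the first $k$ columns after rotating coordinates. This gives $|\mathbb{E} g(X_T)-\mathbb{E}_{\haarOdist_n}g|\le \mathrm{Lip}(g)\, n^{-\lfloor T/(Cn(k+\log n)\log n)\rfloor}\pi\sqrt{k}$ for each such $g$. With $T=\omega(nk(k+\log n)\log n)$, the exponent is $\omega(k)$, giving decay $n^{-\omega(k)}$. This is designed to beat all polynomial-in-$n^{k}$ losses from the remaining steps. Two such losses arise. First, the Lipschitz constant on the Stiefel manifold of a degree-$k$ polynomial is bounded by $k$ times its sup norm. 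Second, the sup norm is bounded by $n^{O(k)}\|f\|_{L^2(\haarOdist_n)}$, by a reverse-H\"older / Nikolskii-type inequality on $\So{n}$ for polynomials of degree $k$. This follows because the space of such polynomials has dimension at most $n^{O(k)}$, and the reproducing kernel is bounded by that dimension. Summing over at most $n^{O(k)}$ isotypic pieces loses another $n^{O(k)}$.

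The main obstacle is controlling $\|f-\mathbb{E}f\|_{L^2(\haarOdist_n)}$ by the normalization. If $\Var_{\haarOdist_n}(f)=1$ this is immediate. The normalization is a maximum, however, so the concern is $f$ with tiny Haar variance but unit Kac variance. I would handle this by applying the same mean bound to $f^2$, a polynomial of degree $2k$, which still needs $T=\omega(nk(k+\log n)\log n)$. This shows $\Var_{\kacdist}(f)\le \Var_{\haarOdist}(f)+n^{-\omega(k)}\|f\|_\infty^2$. Here I must ensure that the sup norm is controlled by the Haar $L^2$ norm of $f-c$ for the right constant $c$. This is the step where the representation-theoretic variance lower bounds of Appendix~\ref{SecRepTheoryVariance} enter: they show that a nonconstant degree-$k$ polynomial (modulo the ideal of $\So{n}$) cannot have Haar variance smaller than $n^{-O(k)}$ times its coefficient norm. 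Combining the two bounds yields mean gap $n^{-\omega(k)}\cdot n^{O(k)}=o(1)$, as required.
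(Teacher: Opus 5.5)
Your proposal has a genuine gap in the reduction to the $k$-column walk. You assert that each matrix coefficient (and each isotypic piece) of degree $\le k$ becomes, after a right rotation, a function of the first $k$ columns. For $\langle u,X^{\otimes m}v\rangle$ this holds when $v\in S^{\otimes m}$ for some subspace $S$ with $\dim S\le k$ (for instance when $v$ is decomposable). It fails for generic vectors of an irreducible summand.

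Concretely, take $v=\sum_i c_i\,e_i\otimes e_i$ with $\sum_i c_i=0$ and distinct $c_i$. This $v$ lies in the traceless symmetric summand of $(\mathbb{R}^n)^{\otimes 2}$. Pairing $X^{\otimes 2}v$ with the projection of $e_1\otimes e_1$ onto that summand gives $\sum_i c_iX_{1i}^2=e_1^\top X\,\mathrm{diag}(c)\,X^\top e_1$. For $k\le n-2$ and any rotation $R$, this is not a function of the first $k$ columns of $XR^\top$: that would force $\mathrm{diag}(c)$ to have an eigenvalue of multiplicity at least $n-k$.

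Whole isotypic components are worse: already in degree one, the standard component contains $\mathrm{tr}(X)$. So Theorem~\ref{MainThm:kcol} says nothing about these pieces. The only column-Wasserstein bound covering them is the full-walk one, which needs order $n^2$ steps. Your Bernstein and Nikolskii estimates are correct in themselves, but they are being fed into a contraction estimate that is not available.

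What is missing is a decomposition of $f$ into genuine $k$-column functions whose total size is controlled by $\sqrt{\Var_{\haarOdist_n}(f)}$. The paper uses monomials:
\begin{itemize}
\item Each monomial depends on at most $k$ coordinate columns and has mean gap at most $2\sqrt{2k\omega_n}$ by Theorem~\ref{LemmaDiffUpperBoundMonomial}.
\item Summing, the total gap is at most $2\sqrt{2k\omega_n}\,\|T^\sharp\|_{\mathrm{coeff},1}\le 2\sqrt{2k\omega_n}\,\sqrt{k+1}\,n^k\,\|T^\sharp\|_{\mathrm{coeff},2}$.
\item The decisive input is Theorem~\ref{ThmVarLowerBound}: $\Var_{\haarOdist_n}(f)\ge\|T^\sharp\|_{\mathrm{coeff},2}^2/((k+1)n^k)$ for the reduced representative.
\end{itemize}
Choosing a representative is unavoidable, since $\sum_{i,j}X_{ij}^2-n$ has nonzero coefficients yet vanishes on $\So{n}$.

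Your matrix-coefficient idea can be repaired in the same spirit. Expand each $\langle u,X^{\otimes m}v\rangle$ with unit $u,v$ into monomials; the coefficient $\ell^1$-norm is at most $n^m$. Then use Schur orthogonality to bound $\sum_{\lambda,i,j}|c^\lambda_{ij}|$ by $n^{O(k)}\|f-\mathbb{E}_{\haarOdist_n}f\|_{L^2}$. This is essentially the computation in the proof of Theorem~\ref{thm:compact-group-variance}, and it makes Bernstein and Nikolskii unnecessary.

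Finally, your ``main obstacle'' is not one. Since $\max(\Var_{\kacdist_{n,T,x}}(f),\Var_{\haarOdist_n}(f))\ge\Var_{\haarOdist_n}(f)$, it suffices to show the mean gap is $o(\sqrt{\Var_{\haarOdist_n}(f)})$, which is exactly what the paper does. The $f^2$ argument is not needed. The appendix variance bound's real role is converting coefficient norms into Haar variance, not controlling sup norms.
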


The proof is deferred to Section \ref{SecMainThmProof} (which, in turn, heavily relies on Section \ref{SecBoundsOnPoly}).

\subsection{Proof Sketches}

We briefly sketch the proof of our main theorem, Theorem \ref{MainThm}. The proof is divided into two parts: an upper bound on the difference $|\kacdist_{n,T,x}(f)- \haarOdist_{n}(f)|$ and a lower bound on the variance $\Var_{\haarOdist_{n}}(f)$ for low-degree polynomials. 

To understand the upper bound, we begin by discussing some simple reductions. First, note that to control the error of any monomial in the entries of a matrix, it is sufficient to control the 1-Wasserstein distance between the matrices (see \textit{e.g.} the standard estimate in Lemma \ref{LemmaTelescopingSum}). Second, note that any monomial of degree $k$ is a function of variables that appear only in at most $k$ columns of Kac's walk. Thus, by linearity of expectations and the previous reduction, it is enough to check that the $k$-column walk mixes quickly in Wasserstein distance.

It is natural to try to mimic the existing sharp estimate on the Wasserstein mixing time of Kac's walk found in \cite{Oliveira_2009}. Unfortunately, there is an immediate problem. To see the problem, we briefly sketch \cite{Oliveira_2009}. The main calculation is a proof that two copies of Kac's walk that start at ``very close'' points $x,y$ can be coupled so that they ``contract'' at some rate $(1 - c)$. From that, a standard local-to-global argument implies that the walk mixes well enough for our purposes in time $c^{-1} \mathrm{polylog}(n)$. For the full walk on $\So{n},$ \cite{Oliveira_2009} shows that a contraction $c \approx n^{-2}$ holds for \textit{all} sufficiently nearby pairs, obtaining an optimal mixing time bound of roughly $n^{2} \mathrm{polylog}(n)$. If we could show an analogous bound of $c \approx \frac{1}{n(k+\log n)}$ for the $k$-column walk, we would be done.

Unfortunately, it does not seem to be possible to get such a bound uniformly in \textit{all} nearby points. Rather than finding a global contraction rate $c$, we find that the optimal constant depends on the row norms of the base points and can vary over roughly the range $0 \lesssim c \lesssim n^{-2}$. While the typical value of the contraction $c$ in the neighbourhood of a point sampled from Haar measure is roughly $c \approx \frac{1}{n(k+\log n)}$, this does not immediately give a one-step contraction estimate. Even worse, there are serious obstructions to going from any local contraction estimate to a global contraction estimates, since geodesic paths between points in high-contraction regions of $\So{n}$ may pass through low-contraction regions.\footnote{On $\mathrm{St}(2,1)$, the contraction rate near a point is proportional to 1 over the sup-norm. As a simple example of why contraction at points along a geodesic cannot be bounded by contraction at the endpoints, consider the points $X=(\frac{1}{\sqrt{2}},\frac{1}{\sqrt{2}})$ and $Y=(\frac{1}{\sqrt{2}},\frac{-1}{\sqrt{2}})$ on the unit circle $\mathrm{St}(2,1)$. These points have sup-norms satisfying $\|X\|_{row,\infty}^{2} = \|Y\|_{row,\infty}^{2} = \frac{1}{2}$, which is the smallest possible value on the unit circle. However, the geodesic must clearly pass through $Z=(1,0),$ which has sup-norm $\|Z\|_{row,\infty}^{2} = 1$, the largest possible value. } 

Our proof of the upper bound proceeds roughly as follows:

\begin{enumerate}
    \item We define an optimal coupling and use it to prove a local contraction estimate in Frobenius norm; see Lemma \ref{LemmaLocalCont}. The same coupling is deterministically non-expansive in Frobenius norm; see Lemma \ref{lem:one_step_stability}.
    \item We use burn-in estimates based on \cite{pillaiKac17} to show that, after a burn-in of order $n\log n$, individual  chains have row norms of order $\frac{k+\log n}{n}$ with high probability and thus are in high-contraction regions; see Section \ref{SecBurninEstimatesAll}. 
    \item We next consider the contraction of two copies of Kac's walk that are started very close to each other, over time periods of $T_{\mathrm{epoch}} \approx n\log(n)$ steps. Using Lemma \ref{lem:B1_multiplicative}, we combine the above burn-in estimates to show that these chains spend most of their time in high-contraction regions, the  contraction estimate \eqref{IneqLocalContractFrob} in these high-contraction regions, and the non-expansion bound \eqref{EqLocalEpochNonExpansion} outside of these regions, to obtain the multi-step contraction estimate \eqref{eq:epoch_boundary_contraction_final_conclusion} in Lemma \ref{lem:epoch_contraction_v2_refactored}.
    \item We apply a standard path-coupling result, Lemma \ref{lem:local_to_global_wasserstein}, to go from these local contraction estimates to global contraction estimates..
\end{enumerate}

The lower bound on the variance is a rather involved representation-theoretic argument, contained in Section \ref{SecRepTheoryVariance}. Compared to standard variance lower bounds in \cite{wein2025computationalcomplexitystatisticsnew}, the main difficulty here is illustrated by the fact that some nontrivial polynomials in the matrix entries, such as $\sum_{i,j} X_{i,j}^{2}$, have 0 variance under the Haar measure. For this reason, it is impossible to get any nontrivial lower bound purely in terms of the sizes of the coefficients, and any calculation must deal with very nontrivial correlation after ``projecting away" from the space of zero-variance polynomials. The majority of our argument consists of projecting to a space of polynomials with nonzero variance, then carrying out exact calculations using the well-understood representations of $\So{n}$. These are combined with a general projection inequality, Theorem \ref{thm:compact-group-variance}, that might be useful for other statistical problems on manifolds.

Finally, in terms of applications, we also show how our main results can be applied to get nontrivial bounds for a real algorithm: the fast Johnson-Lindenstrauss transform (see Theorem \ref{thm:kac-jl-transform}). The basic proof strategy is very similar to the elementary proof of the famous Johnson-Lindenstrauss theorem in  \cite{DasguptaGupta2003ElementaryProofJL}: we merely replace the bounds on exponential moments in  \cite{DasguptaGupta2003ElementaryProofJL} by bounds on moments of polynomials with low degree and propagate these changes. 


\section{Contraction in Columns of Kac's Walk} \label{SecContraction}

\vnote{Starting here, should carefully check that we say ``indistinguishable by low-degree polys" rather than "computationally indistinguishable".} 

The main ingredient in the proof of Theorem \ref{MainThm} is a proof that certain functions of Kac's walk mix much more quickly than the walk itself. In this section, we make this precise and prove the following mixing bound (with some notation made precise later in this section):

\begin{thm}[Global Multi-Step Contraction]
\label{LemmaMainContractionEstimate}
Fix $n \in \mathbb{N}$ and $k \in \mathbb{N}$ with $1\leq k < n$. Let $P$ be the one-step transition kernel of the $k$-column walk on $\mathrm{St}(n,k)$, let $D$ be the Riemannian distance on $\mathrm{St}(n,k)$, and let $W_D$ be Wasserstein distance with cost $D$. Then there exists a constant $C > 0$ such that, for every $T\geq0$ and every $X_0,Y_0\in\mathrm{St}(n,k)$,
\[
W_D\left(P^T(X_0,\cdot),P^T(Y_0,\cdot)\right)
\;\le\;
n^{-\left\lfloor \frac{T}{C n (k+\log(n))\log(n)} \right\rfloor}\,D(X_0,Y_0).
\]
\end{thm}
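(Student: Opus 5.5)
The proof follows the four-step outline in the technical overview: a local coupling, burn-in, contraction over an epoch, and path coupling to globalize.

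\emph{Step 1: local coupling.} For two nearby points $X,Y\in\mathrm{St}(n,k)$, couple one step of the two walks by using the same pair $(i,j)$. The angles are then coupled by an optimal choice: same angle plus a small correction that aligns the $2\times k$ blocks of rows $i,j$ of $X$ and $Y$.

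Two properties are needed of this coupling. First, it is deterministically non-expansive in Frobenius norm (Lemma~\ref{lem:one_step_stability}): a common rotation is an isometry, and the correction can only reduce the distance. Second, there is a local contraction estimate (Lemma~\ref{LemmaLocalCont}) of the form
\[
\mathbb{E}\|X_1-Y_1\|_F^2\le\bigl(1-c(X)\bigr)\|X-Y\|_F^2 ,
\]
up to higher-order terms. Here $c(X)\gtrsim 1/\bigl(n^2\max_i\|X_{i\cdot}\|^2\bigr)$ is governed by the maximal row norm.

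\emph{Step 2: burn-in.} Using the one-column mixing bounds of \cite{pillaiKac17} and a union bound over rows, I will show the following. After $O(n\log n)$ steps, from any start, $\max_i\|X_{i\cdot}\|^2\lesssim (k+\log n)/n$ holds for all but a small fraction of times in an epoch, with high probability. On these good times the rate is $c\gtrsim 1/\bigl(n(k+\log n)\bigr)$.

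\emph{Step 3: epoch contraction.} Work over an epoch of length $T_{\mathrm{epoch}}=Cn(k+\log n)\log n$. Multiply the per-step contraction factors along the trajectory, in the spirit of Lemma~\ref{lem:B1_multiplicative}. At good times use the contraction; at bad times use only non-expansion. This gives
\[
\mathbb{E}\,D(X_{T_{\mathrm{epoch}}},Y_{T_{\mathrm{epoch}}})\le n^{-1}D(X_0,Y_0)
\]
for infinitesimally close starting points; this is Lemma~\ref{lem:epoch_contraction_v2_refactored}. The exceptional burn-in events cost at most the non-expansion bound times their probability, which can be made $\le n^{-2}$.

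\emph{Step 4: globalization.} Apply path coupling (Lemma~\ref{lem:local_to_global_wasserstein}) along a Riemannian geodesic from $X_0$ to $Y_0$. Subdivide it finely, couple adjacent points using Step 3, and pass to the limit; this gives the epoch bound for $W_D$ at all pairs. Iterating over $\lfloor T/T_{\mathrm{epoch}}\rfloor$ epochs, and using the Markov property together with non-expansion for the leftover steps, yields the stated bound.

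\emph{Main obstacle.} The hard step is Step 3. The contraction rate is state-dependent, and the geodesic between good points may pass through bad regions. The key point is that the local contraction estimate is needed only along the random trajectories of the coupled pair, not along the geodesic. Because the pair starts infinitesimally close, the burn-in estimate for one chain transfers to the other. The delicate part is making the infinitesimal linearization uniform, so that the higher-order error terms do not accumulate over $T_{\mathrm{epoch}}$ steps.
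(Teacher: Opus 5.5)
Your outline follows the paper's proof step for step: the same optimal-angle coupling with deterministic Frobenius non-expansion (Lemmas~\ref{LemmaLocalCont} and~\ref{lem:one_step_stability}), a row-norm burn-in built from \cite{pillaiKac17}, the epoch estimate via Lemma~\ref{lem:B1_multiplicative}, and globalization by Lemma~\ref{lem:local_to_global_wasserstein}, followed by iteration over epochs with non-expansion for the leftover steps. The ``main obstacle'' you flag does not arise, because the paper's local contraction is exact rather than linearized: for the optimal angle the one-step decrease is exactly $2\bigl(\sqrt{p_{ij}^2+q_{ij}^2}-p_{ij}\bigr)\geq c\,q_{ij}^2/M$, and the secant identity of Lemma~\ref{lem:row_sum_secant_identity} gives $\sum_{i<j}q_{ij}^2\geq\|K\|^2-\frac14\|K\|^4\geq\frac34\|K\|^2$ with no error term whenever $\|X-Y\|\leq 1$, so non-expansion keeps the pair in this regime for the entire epoch with $\rho_{\mathrm{loc}}$ a universal constant; likewise no transfer of burn-in from one chain to the other is needed, since $M$ bounds the row norms of both chains and Lemma~\ref{lem:kcol_rowmax_burnin} is applied to each chain separately with a union bound.
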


\noindent
The case $k=n$ is the full Kac walk on $\So{n}$ and is already covered by \cite{Oliveira_2009}, so we restrict the statement and proof to $1\leq k<n$. The proof is completed at the end of this section, using the epoch estimate proved in Section \ref{SecBurninEstimatesAll}.

The proof relies on somewhat lengthy calculations on $\mathrm{St}(n,k)$, so we introduce notation before returning to the Markov chain itself.

\subsection{Notation and Basic Facts Related to Geometry}

We write
\[
\So{n}=\{Q\in\mathbb{R}^{n\times n}:Q^{\top}Q=I_n,\ \det(Q)=1\}.
\]
We will typically equip $\So{n}$ with the Riemannian structure induced by the Hilbert-Schmidt inner product
\[
\langle A,B\rangle \equiv \mathrm{Tr}(A^\top B),
\qquad A,B\in\mathbb{R}^{n\times n}.
\]

\noindent
The tangent space at the identity is the Lie algebra of skew-symmetric matrices
\[
\mathfrak{so}(n) = \{ A \in \mathbb{R}^{n\times n} : A^\top = -A \},
\]
and the tangent space at a general point $Q\in \So{n}$ is
\[
T_Q \So{n} = \{ AQ : A \in \mathfrak{so}(n) \}.
\]
For $1 \le i < j \le n$, define the skew-symmetric matrix
\[
a_{ij} = \frac{1}{\sqrt{2}}(e_i e_j^\top - e_j e_i^\top) \in \mathfrak{so}(n),
\]
where $\{e_1,\dots,e_n\}$ is the standard basis of $\mathbb{R}^n$. We note that $\{a_{ij}\}_{1 \leq i < j \leq n}$ is an orthonormal basis of $\mathfrak{so}(n)$. For convenience, we define $a_{ji} = a_{ij}$ for $i > j$, noting that adding these to the earlier basis would of course prevent it from being a basis.

\noindent
For $1 \le k \le n$, the Stiefel manifold is
\[
\mathrm{St}(n,k)
=
\{ X \in \mathbb{R}^{n\times k} : X^\top X = I_k \}.
\]

\noindent
The tangent space at $X\in\mathrm{St}(n,k)$ is
\[
T_X \mathrm{St}(n,k)
=
\{ H \in \mathbb{R}^{n\times k} : X^\top H + H^\top X = 0 \}.
\]

For two matrices $x,y \in \mathrm{St}(n,k)$, we denote by $\| x - y \|$ the usual Frobenius norm. This agrees with the Hilbert-Schmidt norm on $\So{n}$ when $n=k$. Let $D$ be the Riemannian metric on $\mathrm{St}(n,k)$ obtained by viewing $\mathrm{St}(n,k)$ as an embedded submanifold of $\mathbb{R}^{nk}$.

If $X \in \So{n}$, denote by $\mathcal{P}_{k}(X) \in \mathrm{St}(n,k)$ the first $k$ columns of $X$. We note that, if $\{X_{t}\}$ is a copy of Kac's walk, then $\{\mathcal{P}_{k}(X_{t})\}$ is also a Markov chain. We call this chain the $k$-column walk.

Finally, for $i \in [n]$ and $X \in \mathrm{St}(n,k)$, we will let $r_{i}(X) = (X[i,1],\ldots,X[i,k])$ be the $i$-th row of $X$. Define $\|X\|_{row,\infty}^{2} = \max_{i} \|r_{i}(X)\|^{2}$.

In addition to this notation, we record some basic relationships between the main metrics of interest. For all $x,y\in\mathrm{St}(n,k)$,
\be \label{IneqLocalEquivFrobRiem}
\|x-y\| \leq D(x,y).
\ee
In the other direction, we claim that there exist universal constants $C_D<\infty$ and $\rho_D>0$ not depending on $n$ or $k$ such that, whenever $\|x-y\|\leq \rho_D$,
\be \label{IneqLocalUpperFrobRiem}
D(x,y) \leq \|x-y\| + C_D \|x-y\|^{2}.
\ee
We believe that inequality \eqref{IneqLocalUpperFrobRiem} is standard, though we don't know a reference. See Lemma~\ref{lem:stiefel_metric_comparison} for a complete proof.

\subsection{Local Contraction and Averaging}\label{SecCoupAvg}

The first important step in our argument is a local contraction estimate in Frobenius norm. The coupling used in the proof below optimizes over the two rows updated by one step of Kac's walk. For $k=n$ the angle in \eqref{EqAlphaOptimal} agrees with the angle in \cite{Oliveira_2009} up to first-order terms, but in general it can be substantially different.

\begin{lemma}[Local Contraction Bound] \label{LemmaLocalCont}
There is a universal constant $c_{\mathrm{loc}}>0$ such that the following holds. Let $X,Y\in \mathrm{St}(n,k)$ satisfy $\|X-Y\|\leq 1$, and let
\be
M = \max\left(\max_{a \in [n]} \| r_{a}(X)\|^{2}, \max_{a \in [n]} \|r_{a}(Y) \|^{2}\right).
\ee
Then there is a coupling $(X',Y')$ of one step of the $k$-column walk from $X$ and $Y$ such that
\be \label{IneqLocalContractConc}
\mathbb{E}[\|X'-Y' \|^{2}\mid X,Y]
\leq \left(1 - \frac{c_{\mathrm{loc}}}{Mn^{2}} \right) \|X-Y\|^{2}.
\ee
After decreasing $c_{\mathrm{loc}}$ by a universal factor, the same coupling also satisfies the Frobenius-distance contraction
\be \label{IneqLocalContractFrob}
\mathbb{E}[\|X'-Y' \|\mid X,Y]
\leq \left(1 - \frac{c_{\mathrm{loc}}}{Mn^{2}} \right) \|X-Y\|.
\ee
\end{lemma}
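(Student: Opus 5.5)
The plan is to use the \emph{synchronous pair, optimally shifted angle} coupling. Both chains use the same pair $(i,j)$. Chain $X$ uses angle $\theta$ and chain $Y$ uses angle $\theta+\alpha_{ij}$, where $\alpha_{ij}=\alpha_{ij}(X,Y)$ is deterministic; this is the angle referred to in \eqref{EqAlphaOptimal}. Since $\theta+\alpha_{ij}$ is again uniform mod $2\pi$, this is a valid coupling.

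Let $A,B\in\mathbb{R}^{2\times k}$ be the $(i,j)$-rows of $X$ and $Y$. Only these rows change, and $\|R(\theta)A-R(\theta+\alpha)B\|=\|A-R(\alpha)B\|$. Hence deterministically
\[
\|X'-Y'\|^2=\|X-Y\|^2-\big(\|A-B\|^2-\|A-R(\alpha_{ij})B\|^2\big).
\]
We choose $\alpha_{ij}$ to minimize $\|A-R(\alpha)B\|^2$, i.e.\ to maximize $\mathrm{Tr}(R(\alpha)BA^\top)$. Writing $N=BA^\top$, $s=N_{11}+N_{22}$ and $w=N_{12}-N_{21}$, the maximum equals $\sqrt{s^2+w^2}$. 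So the gain for the pair $(i,j)$ is
\[
g_{ij}=2\big(\sqrt{s^2+w^2}-s\big)\ge 0,
\]
and with $\alpha=0$ this also yields the non-expansion needed later.

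Next I lower bound the gain. If $s>0$, then $\sqrt{s^2+w^2}-s\ge w^2/(2\sqrt{s^2+w^2})$. If $s\le 0$, the gain is at least $2|w|$. In both cases $\sqrt{s^2+w^2}\le\|A\|\|B\|\le 2M$, since each of $A,B$ has two rows of squared norm at most $M$. Also $|w|\le 2M$. Therefore in all cases $g_{ij}\ge w_{ij}^2/(2M)$.

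The key identity comes from setting $H=Y-X$ and noting that the $r_i(X)\cdot r_j(X)$ terms cancel:
\[
w_{ij}=r_i(H)\cdot r_j(X)-r_j(H)\cdot r_i(X)=\pm\,(HX^\top-XH^\top)_{ij}.
\]
Thus
\[
\sum_{i<j}w_{ij}^2=\tfrac12\|HX^\top-XH^\top\|^2=\|HX^\top\|^2-\mathrm{Tr}\big((X^\top H)^2\big).
\]

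\textbf{Main obstacle.} The delicate step is showing this is $\gtrsim\|H\|^2$, because $H$ is a chord, not a tangent vector. Using $X^\top X=I_k$ gives $\|HX^\top\|=\|H\|$. From $X^\top X=Y^\top Y=I_k$ we get $X^\top H+H^\top X=-H^\top H$. So $S=X^\top H$ has symmetric part $-\tfrac12H^\top H$ and some skew part $K$. Then
\[
-\mathrm{Tr}(S^2)=\|K\|^2-\tfrac14\|H^\top H\|^2\ge-\tfrac14\|H\|^4.
\]
Hence $\sum_{i<j}w_{ij}^2\ge\|H\|^2-\tfrac14\|H\|^4\ge\tfrac34\|H\|^2$ when $\|H\|\le1$; this is exactly where the hypothesis $\|X-Y\|\le1$ enters. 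In the tangent case the skew part only helps, so no other cancellation should arise.

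Averaging over the $\binom n2$ pairs gives
\[
\mathbb{E}\|X'-Y'\|^2\le\|X-Y\|^2-\binom n2^{-1}\frac{1}{2M}\cdot\frac34\|X-Y\|^2,
\]
which is \eqref{IneqLocalContractConc} with $c_{\mathrm{loc}}=3/4$ (or smaller). Finally, Jensen gives $\mathbb{E}\|X'-Y'\|\le\sqrt{\mathbb{E}\|X'-Y'\|^2}$, and $\sqrt{1-x}\le1-x/2$, so \eqref{IneqLocalContractFrob} follows with $c_{\mathrm{loc}}$ halved.
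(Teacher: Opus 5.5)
Your proposal is correct and follows the paper's proof essentially step for step. It uses the same optimally shifted-angle coupling, the same exact per-pair gain $2(\sqrt{s^2+w^2}-s)$ (your $s,w$ are the paper's $p_{ij},q_{ij}$ up to sign) bounded below by order $w_{ij}^2/M$, and the same secant row-sum identity via the symmetric/skew split of $X^\top(Y-X)$, followed by averaging over pairs and Jensen; the only cosmetic difference is your case split on the sign of $s$, where the paper uses the single rationalization $\Delta_{ij}=2q_{ij}^2/(\sqrt{p_{ij}^2+q_{ij}^2}+p_{ij})$.
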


\begin{proof}
We first construct the one-step coupling. Sample $i,j$ uniformly from $\{i,j\in[n]:1\leq i<j\leq n\}$ and sample $\theta$ uniformly from $[0,2\pi)$. For an angle shift $\alpha=\alpha(X,Y,i,j)$, set
\be\label{EqCoupling}
X' = R_{ij}(\theta) X, \qquad
Y' = R_{ij}(\theta + \alpha) Y.
\ee
As long as $\alpha$ is a deterministic function of $X,Y,i,j$, \eqref{EqCoupling} is a valid coupling of the two one-step marginals: conditionally on $X,Y,i,j$, the shifted angle $\theta+\alpha$ is uniform modulo $2\pi$.

We now choose $\alpha$. For the selected pair $(i,j)$, write
\[
B_X^{ij}=\begin{pmatrix} r_i(X) \\ r_j(X) \end{pmatrix},
\qquad
B_Y^{ij}=\begin{pmatrix} r_i(Y) \\ r_j(Y) \end{pmatrix}
\in \mathbb{R}^{2\times k},
\qquad
J=\begin{pmatrix}0&1\\-1&0\end{pmatrix}.
\]
Define
\be\label{EqPQDef}
p_{ij}=\langle B_X^{ij},B_Y^{ij}\rangle_F,
\qquad
q_{ij}=\langle B_X^{ij},J B_Y^{ij}\rangle_F.
\ee
If $p_{ij}^{2}+q_{ij}^{2}>0$, define $\widehat\alpha_{ij}(X,Y)\in(-\pi,\pi]$ by
\be\label{EqAlphaOptimal}
\cos\widehat\alpha_{ij}(X,Y)=\frac{p_{ij}}{\sqrt{p_{ij}^{2}+q_{ij}^{2}}},
\qquad
\sin\widehat\alpha_{ij}(X,Y)=\frac{q_{ij}}{\sqrt{p_{ij}^{2}+q_{ij}^{2}}}.
\ee
If $p_{ij}=q_{ij}=0$, set $\widehat\alpha_{ij}(X,Y)=0$. In \eqref{EqCoupling}, we take $\alpha=\widehat\alpha_{ij}(X,Y)$.

This choice maximizes the two-row overlap
\be\label{EqTwoRowOverlap}
\langle B_X^{ij}, R^{(2)}_{\alpha} B_Y^{ij}\rangle_F
= p_{ij}\cos\alpha+q_{ij}\sin\alpha,
\ee
where $R^{(2)}_{\alpha}=\begin{pmatrix}\cos\alpha&\sin\alpha\\-\sin\alpha&\cos\alpha\end{pmatrix}$. For every angle $\alpha$, applying the common inverse rotation $R_{ij}(-\theta)$ to both points in \eqref{EqCoupling} gives
\be\label{EqTwoRowDistanceFormula}
\|X-R_{ij}(\alpha)Y\|^{2}
=\|X-Y\|^{2}-2\Big(p_{ij}\cos\alpha+q_{ij}\sin\alpha-p_{ij}\Big).
\ee
Combining \eqref{EqAlphaOptimal} with \eqref{EqTwoRowDistanceFormula} gives
\be\label{EqExactDecrease}
\|X-Y\|^2-\|X'-Y'\|^2
=2\left(\sqrt{p_{ij}^{2}+q_{ij}^{2}}-p_{ij}\right)
\equiv \Delta_{ij}.
\ee

If
\[
M\geq \max\bigl(\|X\|_{row,\infty}^{2},\|Y\|_{row,\infty}^{2}\bigr),
\]
then $|p_{ij}|,|q_{ij}|\leq \|B_X^{ij}\|_F\|B_Y^{ij}\|_F\leq 2M$. We claim that there exists a universal constant $c_\Delta>0$ such that
\be\label{EqDeltaLowerByq}
\Delta_{ij}\geq \frac{c_\Delta}{M}q_{ij}^{2}.
\ee
If $q_{ij}=0$ the claim is trivial. Otherwise, rewriting Equation \eqref{EqExactDecrease} gives
\[
\Delta_{ij}
=2\frac{q_{ij}^{2}}{\sqrt{p_{ij}^{2}+q_{ij}^{2}}+p_{ij}},
\]
and the denominator is at most $6M$.

Let $K=Y-X$. By \eqref{EqExactDecrease} and \eqref{EqDeltaLowerByq},
\[
\mathbb{E}[\|X'-Y'\|^{2}\mid X,Y]
\leq \|K\|^{2}
-\frac{c_\Delta}{M\binom{n}{2}}\sum_{i<j}q_{ij}^{2}.
\]
Since $B_Y^{ij}=B_X^{ij}+B_K^{ij}$ and $\langle B_X^{ij},JB_X^{ij}\rangle_F=0$, we have
\[
q_{ij}=r_i(X)\cdot r_j(K)-r_j(X)\cdot r_i(K).
\]
Lemma \ref{lem:row_sum_secant_identity} gives, for $\|K\|\leq 1$,
\[
\sum_{i<j}q_{ij}^{2}\geq \frac34\|K\|^{2}.
\]
Combining \eqref{EqDeltaLowerByq} with Lemma \ref{lem:row_sum_secant_identity}, and decreasing the universal constant if necessary, gives \eqref{IneqLocalContractConc}. Jensen's inequality and the standard bound $\sqrt{1-u}\leq 1-u/2$ then give \eqref{IneqLocalContractFrob}, again after decreasing $c_{\mathrm{loc}}$ by a universal factor.
\end{proof}

Because the term $M$ appearing in Lemma \ref{LemmaLocalCont} can be as large as 1, the contraction bound in Lemma \ref{LemmaLocalCont} can be of the order $(1 - \Theta(n^{-2}))$ for the worst-case starting points $X,Y$. This poor contraction is real, not merely an artifact of the proof strategy. We get around this problem by showing that $M$ is typically of order $\frac{k+\log n}{n}$ after a burn-in of order $\Omega(n \log n)$. The details of this calculation are given in Appendix \ref{SecBurninEstimatesAll}. The main estimate we need from that appendix is the following consequence of Lemma \ref{lem:epoch_contraction_v2_refactored}.

\begin{corollary} \label{CorWhatWeNeedD}
Let $P$ be the one-step transition kernel of the $k$-column walk and let $W_D$ denote Wasserstein distance with cost $D$. There are universal constants $C,\rho_{\mathrm{loc}}>0$ with the following property. For all sufficiently large $n$, if $x,y\in \mathrm{St}(n,k)$ satisfy $D(x,y)\leq \rho_{\mathrm{loc}}$, then
\be\label{EqEpochLocalForPathCoupling}
W_D\left(P^{T_{\mathrm{epoch}}}(x,\cdot),P^{T_{\mathrm{epoch}}}(y,\cdot)\right)
\leq n^{-1}D(x,y),
\ee
where $T_{\mathrm{epoch}}$ is as in Definition \ref{def:C4_epoch} and satisfies
\[
T_{\mathrm{epoch}}\leq Cn(k+\log n)\log n.
\]
\end{corollary}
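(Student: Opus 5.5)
The plan is to deduce the corollary from the epoch estimate, Lemma \ref{lem:epoch_contraction_v2_refactored}, which I take to give the following. When two copies of the $k$-column walk start from Frobenius-close points $x,y$ and are driven by the coupling of Lemma \ref{LemmaLocalCont}, the Frobenius distance at time $T_{\mathrm{epoch}}$ satisfies a multiplicative contraction of the form \eqref{eq:epoch_boundary_contraction_final_conclusion}, with expected contraction factor at most $n^{-1}/2$ (or any fixed small multiple of $n^{-1}$). The corollary is then obtained in three steps: convert the hypothesis on $D$ into a Frobenius hypothesis, apply the epoch lemma in Frobenius norm, and convert the Frobenius conclusion back into a bound on $D$.

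\textbf{Step 1: input conversion.} Suppose $D(x,y)\le\rho_{\mathrm{loc}}$. By \eqref{IneqLocalEquivFrobRiem}, $\|x-y\|\le D(x,y)\le\rho_{\mathrm{loc}}$. I will choose $\rho_{\mathrm{loc}}\le\min(\rho_D,1)$ and also at most the closeness threshold required in Lemma \ref{lem:epoch_contraction_v2_refactored}. With this choice the epoch lemma applies and gives $\mathbb{E}\|X_{T}-Y_{T}\|\le \tfrac{1}{2n}\|x-y\|$ for $T=T_{\mathrm{epoch}}$. The bound $T_{\mathrm{epoch}}\le Cn(k+\log n)\log n$ is read off directly from Definition \ref{def:C4_epoch}.

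\textbf{Step 2: output conversion, and why it is the main obstacle.} We need $\mathbb{E} D(X_T,Y_T)$, and the inequality \eqref{IneqLocalUpperFrobRiem} only holds while $\|X_T-Y_T\|\le\rho_D$. This is automatic: Lemma \ref{lem:one_step_stability} says the coupling is deterministically non-expansive in Frobenius norm, so $\|X_T-Y_T\|\le\|x-y\|\le\rho_D$ almost surely. Then
\[
\mathbb{E} D(X_T,Y_T)\le \mathbb{E}\|X_T-Y_T\|\,(1+C_D\|X_T-Y_T\|)\le (1+C_D\rho_{\mathrm{loc}})\,\tfrac{1}{2n}\|x-y\|\le \tfrac{1}{n}D(x,y),
\]
provided $C_D\rho_{\mathrm{loc}}\le 1$. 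Since $W_D$ is bounded by the cost under any coupling, this gives \eqref{EqEpochLocalForPathCoupling}.

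\textbf{Fallback if the contraction factor is only $n^{-1}$.} If the epoch lemma gives factor exactly $n^{-1}$ rather than a strict fraction of it, I would absorb the slack $(1+C_D\rho)$ in one of two ways. The first is to lengthen the epoch by a constant factor, chaining two epochs via the Markov property; non-expansiveness keeps the pair in the local regime, so the second epoch applies conditionally. The second is to use that the epoch lemma's constant can be taken as $n^{-2}$ for $n$ large. The restriction to sufficiently large $n$ is there to absorb such constants.
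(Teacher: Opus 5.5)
Your proposal is correct and follows the paper's own argument. The proof of Lemma \ref{lem:epoch_contraction_v2_refactored} does the same three things: it bounds the Frobenius distance after the epoch using burn-in together with Lemma \ref{lem:B1_multiplicative}, it uses deterministic non-expansion to stay within $\rho_D$, and it converts back to $D$ at the cost of the factor $1+C_D\rho_{\mathrm{loc}}$; the corollary is a direct restatement of that lemma's conclusion. Your fallback is unnecessary, since the Frobenius contraction factor there is $(1-\kappa)^{T_{\mathrm{contr}}}+\delta_{\mathrm{epoch}}\le n^{-20}+o(n^{-10})$, far below $n^{-1}/2$.
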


\subsection{From Local to Global Contraction}\label{SecLocalToGlobalContraction}

We use the standard local-to-global criterion for Wasserstein contraction on a geodesic metric space. The following lemma follows immediately from \cite[Proposition~3.1]{Paulin2016MixingConcentrationRicci} and \cite[Propositions~19 and~20]{Ollivier_2009}. It is not substantially different from the original path-coupling argument of \cite{BubleyDyer1997PathCoupling}, except for technical difficulties related to being on a continuous state space. 

\begin{lemma}[Local-to-global Wasserstein contraction]\label{lem:local_to_global_wasserstein}
Let $(E,D)$ be a compact geodesic metric space and let $K$ be a Markov kernel on $E$. Let $W_D$ denote Wasserstein distance with cost $D$. Suppose that, for some $\rho>0$ and $\lambda\in[0,1)$,
\be\label{EqPathCouplingHypothesis}
W_D(K(x,\cdot),K(y,\cdot))\leq \lambda D(x,y)
\qquad \text{whenever }D(x,y)\leq \rho.
\ee
Then, for all $x,y\in E$,
\be\label{EqPathCouplingPointMassConclusion}
W_D(K(x,\cdot),K(y,\cdot))\leq \lambda D(x,y).
\ee
Consequently, for all probability measures $\mu,\nu$ on $E$,
\be\label{EqPathCouplingMeasureConclusion}
W_D(\mu K,\nu K)\leq \lambda W_D(\mu,\nu).
\ee
\end{lemma}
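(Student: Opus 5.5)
The plan is the usual path-coupling argument, adapted to a continuous geodesic space. Fix $x,y\in E$ and a minimizing geodesic $\gamma:[0,1]\to E$ from $x$ to $y$; this exists because $E$ is compact and geodesic. Choose a partition $0=s_0<s_1<\dots<s_m=1$ fine enough that each $D(\gamma(s_{l-1}),\gamma(s_l))\le\rho$. Minimality of $\gamma$ then gives $\sum_{l} D(\gamma(s_{l-1}),\gamma(s_l))=D(x,y)$. Since $W_D$ is a metric on probability measures on $E$, the triangle inequality together with hypothesis \eqref{EqPathCouplingHypothesis} applied to each consecutive pair gives
\[
W_D(K(x,\cdot),K(y,\cdot))\le\sum_{l=1}^m W_D\bigl(K(\gamma(s_{l-1}),\cdot),K(\gamma(s_l),\cdot)\bigr)\le \lambda\sum_{l=1}^m D(\gamma(s_{l-1}),\gamma(s_l))=\lambda D(x,y),
\]
which is \eqref{EqPathCouplingPointMassConclusion}. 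The triangle inequality for $W_D$ relies on the gluing lemma, which holds on compact (hence Polish) spaces.

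For \eqref{EqPathCouplingMeasureConclusion}, take an optimal coupling $\pi$ of $(\mu,\nu)$; it exists by compactness and lower semicontinuity of the cost. We then want to average the pointwise bound over $\pi$, using
\[
W_D(\mu K,\nu K)\le\int W_D(K(x,\cdot),K(y,\cdot))\,\pi(dx,dy)\le\lambda\int D\,d\pi=\lambda W_D(\mu,\nu).
\]
The first inequality is the convexity of $W_D$ under mixtures. To justify it, I would use Kantorovich--Rubinstein duality: for any $1$-Lipschitz $f$,
\[
\int f\,d(\mu K)-\int f\,d(\nu K)=\int\bigl(Kf(x)-Kf(y)\bigr)\,\pi(dx,dy),
\]
and each integrand is bounded by $W_D(K(x,\cdot),K(y,\cdot))$. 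Taking the supremum over $f$ gives the claim. This route avoids the need for a measurable selection of optimal couplings.

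The main obstacle is measurability: the map $(x,y)\mapsto W_D(K(x,\cdot),K(y,\cdot))$ must be measurable so that the integral above makes sense. The duality argument sidesteps most of this, since $(x,y)\mapsto Kf(x)-Kf(y)$ is measurable for each fixed $f$. However, the domination by the pointwise Wasserstein distance still uses the pointwise bound, and that bound holds everywhere by the first step, so it is harmless: one can directly bound $Kf(x)-Kf(y)\le\lambda D(x,y)$. That inequality is all we need, so I would phrase the second step in exactly this way, and then cite \cite{Ollivier_2009} and \cite{Paulin2016MixingConcentrationRicci} for the standard details.
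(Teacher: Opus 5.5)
Your proposal is correct and is essentially the argument the paper intends. The paper gives no proof of its own; it cites \cite[Propositions~19 and~20]{Ollivier_2009} and \cite[Proposition~3.1]{Paulin2016MixingConcentrationRicci}, which amount to your two steps: chain the hypothesis along $\rho$-close points of a minimizing geodesic using the triangle inequality for $W_D$, then average the resulting pointwise bound over an optimal coupling of $(\mu,\nu)$. Your Kantorovich--Rubinstein step, using the bound $Kf(x)-Kf(y)\le\lambda D(x,y)$ for $1$-Lipschitz $f$, is a clean way to get the measure-level conclusion without needing measurability of $(x,y)\mapsto W_D(K(x,\cdot),K(y,\cdot))$ or a measurable selection of couplings.
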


We are now ready to prove Theorem \ref{LemmaMainContractionEstimate}.

\begin{proof}[Proof of Theorem \ref{LemmaMainContractionEstimate}]
Let $P$ be the one-step transition kernel of the $k$-column walk and set $K=P^{T_{\mathrm{epoch}}}$. By Corollary \ref{CorWhatWeNeedD}, for all $x,y\in \mathrm{St}(n,k)$ with $D(x,y)\leq \rho_{\mathrm{loc}}$,
\be\label{EqEpochLocalForPathCouplingInProof}
W_D(K(x,\cdot),K(y,\cdot))\leq n^{-1}D(x,y).
\ee
For $1\leq k<n$, the space $\mathrm{St}(n,k)$ with the Riemannian distance $D$ is a compact geodesic metric space.\footnote{The only non-obvious part of this claim is the fact that any two points are joined by a minimizing geodesic. This is the content of the Hopf--Rinow theorem; see, for example,
\cite[Theorem~6.13]{lee2018riemannian}.} Applying Lemma \ref{lem:local_to_global_wasserstein} to \eqref{EqEpochLocalForPathCouplingInProof}, with $\rho=\rho_{\mathrm{loc}}$ and $\lambda=n^{-1}$, gives
\be\label{EqEpochGlobalForPathCoupling}
W_D(K(x,\cdot),K(y,\cdot))\leq n^{-1}D(x,y)
\ee
for all $x,y\in \mathrm{St}(n,k)$. The measure-level conclusion \eqref{EqPathCouplingMeasureConclusion} gives the same contraction after each subsequent epoch.

Let $r=\left\lfloor T/T_{\mathrm{epoch}}\right\rfloor$ and write $T=rT_{\mathrm{epoch}}+s$ with $0\leq s<T_{\mathrm{epoch}}$. Iterating \eqref{EqPathCouplingMeasureConclusion} with $K=P^{T_{\mathrm{epoch}}}$ and $\lambda=n^{-1}$ gives
\[
W_D\left(P^{rT_{\mathrm{epoch}}}(X_0,\cdot),P^{rT_{\mathrm{epoch}}}(Y_0,\cdot)\right)
\leq n^{-r}D(X_0,Y_0).
\]
Since our coupling is non-expansive (see Lemma \ref{lem:one_step_stability}), the last $s$ steps cannot increase the distance between the chains, and therefore cannot increase the expected distance. This completes the proof.
\end{proof}

\section{Bounds on Moments of Polynomials} \label{SecBoundsOnPoly}

We give bounds on the moments of low-degree polynomials under both (a) Haar measure and (b) the measure of Kac's walk on $\So{n}$ after $T$ steps.

\subsection{Upper Bounds on Differences of Means}

We use the bounds in Section \ref{SecContraction}. Recall $\kacdist_{n,T,x}$ is the distribution of a copy of Kac's walk on $\So{n}$ started at $X_{0}=x$ after $T$ steps, and $\haarOdist_{n}$ is the Haar measure on $\So{n}$. Our main estimate is:

\begin{theorem}\label{LemmaDiffUpperBoundMonomial}
For $n\in \mathbb{N}$, fix $1\leq k<n$ and $T\geq0$. Then fix a function $f= f_{n}$ with $f \, : \: \So{n} \mapsto \mathbb{R}$ of the form 
\[
f(X) = \prod_{r=1}^{k} X[i_{r},j_{r}].
\]

Let $C > 0$ be the constant appearing in the statement of Theorem \ref{LemmaMainContractionEstimate} and let 
\be \label{eq:omega_def}
\omega_{n} = C_{\mathrm{diam}}\,k\, n^{- \left\lfloor \frac{T}{C n (k+\log(n))\log(n)} \right\rfloor}
\ee 
be the upper bound obtained in that theorem.\footnote{Here we replace $D(X,Y)$ with the upper bound $C_{\mathrm{diam}} \, k$, where $C_{\mathrm{diam}}$ is a universal constant chosen so that the diameter of $\mathrm{St}(n,k)$ with respect to the Riemannian distance $D$ is at most $C_{\mathrm{diam}}k$.}

There exists some universal constant $N_{0}$ so that, for all $n > N_{0}$ and all $x \in \So{n}$,
\be 
| \kacdist_{n,T,x}(f) - \haarOdist_{n}(f) | \leq 2\sqrt{2k\,\omega_{n}}.
\ee 
\end{theorem}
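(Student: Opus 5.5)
The plan is to reduce to the $k$-column walk and then pass from a Wasserstein bound to a bound on the monomial via a coupling. The monomial $f$ depends only on the entries in the columns $\{j_1,\dots,j_k\}$, a set of at most $k$ columns. Kac's walk acts by left multiplication, so the law of any fixed set of columns evolves as a $k'$-column walk with $k'\le k$. Up to relabelling columns, this is the $k'$-column walk on $\mathrm{St}(n,k')$ started from $\mathcal{P}_{k'}(x)$. Let $Q$ be uniform on $\So{n}$; then $\mathcal{P}_{k'}(Q)$ is the Haar measure on $\mathrm{St}(n,k')$, which is invariant for the $k'$-column walk. Hence $\haarOdist_n(f)=\mathbb{E}[\,\mathbb{E}[f(\text{walk from } \mathcal{P}_{k'}(Q) \text{ after } T \text{ steps})]\,]$.

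First, I would take an optimal $W_D$-coupling, given by Theorem \ref{LemmaMainContractionEstimate}, between the walk started at $\mathcal{P}_{k'}(x)$ and the walk started at $\mathcal{P}_{k'}(y)$, for each $y$. Averaging over $y\sim$ Haar and using convexity of $W_D$ in its arguments yields a pair $(U,V)$ with $U\sim \kacdist_{n,T,x}$ restricted to the columns, $V$ Haar-distributed on $\mathrm{St}(n,k')$, and $\mathbb{E}[D(U,V)]\le \omega_n$. Here the diameter of $\mathrm{St}(n,k)$, at most $C_{\mathrm{diam}}k$, replaces $D(X_0,Y_0)$. Since $\|U-V\|\le D(U,V)$ by \eqref{IneqLocalEquivFrobRiem}, we also get $\mathbb{E}\|U-V\|\le \omega_n$.

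Next, I would bound $|f(U)-f(V)|$ by telescoping the product, as in Lemma \ref{LemmaTelescopingSum}: replace the factors $U[i_r,j_r]$ by $V[i_r,j_r]$ one at a time. Every entry has absolute value at most $1$, so
\[
|f(U)-f(V)|\le \sum_{r=1}^k |U[i_r,j_r]-V[i_r,j_r]|\le \sqrt{k}\,\|U-V\|,
\]
using Cauchy--Schwarz over the (possibly repeated) positions. This gives the crude bound $\sqrt{k}\,\omega_n$. The target is $2\sqrt{2k\omega_n}$, which is weaker when $\omega_n\le 1$. The square-root form suggests the authors instead bound $|f(U)-f(V)|\le \min(2,\ \cdot)$ or use a second-moment argument. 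Concretely, $|f(U)-f(V)|\le 2$ and $|f(U)-f(V)|\le \sum_r|\Delta_r|$, so $|f(U)-f(V)|^2\le 2\sum_r|\Delta_r|$, and Jensen gives $\mathbb{E}|f(U)-f(V)|\le \sqrt{2\sqrt{k}\,\omega_n}$. Either route gives the stated inequality, because $\sqrt{k}\,\omega_n\le 2\sqrt{2k\omega_n}$ whenever $\omega_n\le 8$, and the bound is trivial otherwise, since $|\kacdist(f)-\haarOdist(f)|\le 2$.

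The main obstacle is bookkeeping rather than mathematics. Repeated columns among the $j_r$ mean the walk has $k'<k$ columns, and $\omega_n$ is monotone in $k$, so the estimate still holds. The hypothesis "$n>N_0$" comes from the "sufficiently large $n$" condition in Corollary \ref{CorWhatWeNeedD}. If $k'$ turns out to equal $n$, Theorem \ref{LemmaMainContractionEstimate} does not directly apply, but the statement assumes $k<n$, so $k'<n$ as well.
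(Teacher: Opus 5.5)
Your argument is correct in substance and follows the paper's route. Like the paper, you apply Theorem~\ref{LemmaMainContractionEstimate} to the point mass at the selected columns of $x$ and to the stationary Haar law on the Stiefel manifold, take an optimal plan with $\mathbb{E}[D]\le\omega_n$, telescope the monomial, and use $|f|\le 1$.

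One step is wrong as written. The inequality $\sum_{r}|U[i_r,j_r]-V[i_r,j_r]|\le\sqrt{k}\,\|U-V\|$ fails when positions repeat. Take $f(X)=X[1,1]^k$, $U=(2^{-1/2},2^{-1/2},0,\ldots,0)^\top$, and $V$ its rotation by a small angle $\theta$ in the $(1,2)$-plane. Then the left side is about $k\theta/\sqrt{2}$, while the right side is about $\sqrt{k}\,\theta$. With multiplicities $m_{ab}$, Cauchy--Schwarz only gives $\bigl(\sum_{a,b} m_{ab}^2\bigr)^{1/2}\|U-V\|$, and this can be as large as $k\|U-V\|$. The correct general bound is $\sum_r|\Delta_r|\le k\|U-V\|_\infty\le k\|U-V\|$, which is exactly what the paper uses on its good event.

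The repair costs nothing. Your linear estimate becomes $\mathbb{E}|f(U)-f(V)|\le\min(2,k\omega_n)$, and $\min(2,k\omega_n)\le 2\sqrt{2k\omega_n}$: if $k\omega_n\le 8$ then $k\omega_n\le\sqrt{8}\sqrt{k\omega_n}$, and otherwise the right side exceeds $8$. Your Jensen route also survives and gives $\sqrt{2k\omega_n}$ directly.

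The only genuine difference from the paper is the final step. The paper applies Markov's inequality at the threshold $\varepsilon_n=\sqrt{2\omega_n/k}$. It bounds the difference by $k\varepsilon_n$ off an event of probability at most $\delta_n=\omega_n/\varepsilon_n$, and by $2$ on that event. This choice of $\varepsilon_n$ is what produces exactly $k\varepsilon_n+2\delta_n=2\sqrt{2k\omega_n}$.

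Bounding the expectation directly, as you do, shows that the square root is an artifact of that split. You actually get $\bigl|\kacdist_{n,T,x}(f)-\haarOdist_{n}(f)\bigr|\le\min(2,k\omega_n)$, which is considerably stronger when $\omega_n$ is small. For instance, the factor $n^{-r}$ in the proof of Theorem~\ref{MainThm} would improve to $n^{-2r}$.
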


\begin{proof}

For convenience, define the sequences 
\be \label{EqDefEpsDelt}
\varepsilon_{n} = \sqrt{\frac{2\omega_{n}}{k}}, \qquad \delta_{n} = \frac{\omega_{n}}{\varepsilon_{n}}.
\ee 

We note that $f(x)$ depends on only the entries of $x$ that appear in columns in the set $\{j_{r}\}_{r=1}^{k}$, and this set clearly has at most $k$ entries. Without loss of generality, assume that $\{j_{r}\}_{r=1}^{k} \subset [k]$. 

The proof of Theorem \ref{LemmaMainContractionEstimate} also gives the same contraction for any starting distribution, not merely point masses. Applying this to the point mass at $\mathcal P_k(x)$ and to the stationary $k$-column Haar law gives, by the definition of $\omega_n$ in \eqref{eq:omega_def},
\[
W_D\left(\mathcal{L}(\mathcal{P}_{k}(X_T)),\mathcal{L}(\mathcal{P}_{k}(Y))\right)\leq \omega_n,
\]
where $Y \sim \haarOdist_{n}$. Since $\mathrm{St}(n,k)$ is compact, this Wasserstein distance is realized by an optimal transport plan. Thus we can couple $\mathcal{P}_{k}(X_T)$ to $\mathcal{P}_{k}(Y)$ so that $\mathbb{E}[D(\mathcal{P}_{k}(X_{T}),\mathcal{P}_{k}(Y))]\le \omega_{n}$. Hence Markov's inequality gives
\be\label{IneqApplLemmaNormBds}
\mathbb{P}[D(\mathcal{P}_{k}(X_{T}),\mathcal{P}_{k}(Y)) > \varepsilon_{n}] \leq \delta_{n}.
\ee 

Next, we obtain entrywise bounds. Denote by $\| \cdot \|_{\infty}$ the entrywise supnorm. By \eqref{IneqLocalEquivFrobRiem}, we have $\|\mathcal{P}_{k}(X_{T})-\mathcal{P}_{k}(Y)\|\le D(\mathcal{P}_{k}(X_{T}),\mathcal{P}_{k}(Y))$, and the standard norm inequality $\|A\|_{\infty}\le \|A\|$ implies
\be \label{BasicSupBoundColumn}
\mathbb{P}[\| \mathcal{P}_{k}(X_{T}) - \mathcal{P}_{k}(Y)\|_{\infty}  > \varepsilon_{n}] \leq \delta_{n}.
\ee

On the event $\{ \| \mathcal{P}_{k}(X_{T}) - \mathcal{P}_{k}(Y)\|_{\infty}  \leq \varepsilon_{n}\}$, we have\footnote{We are using a standard telescoping-sum argument here. See Lemma \ref{LemmaTelescopingSum} in the appendix for a detailed calculation of the telescoping-sum identity; we then apply the triangle inequality to the result.}

\be 
|f(X_{T}) - f(Y)| \leq k\,\varepsilon_{n}.
\ee 

Combining this with Inequality \eqref{BasicSupBoundColumn}, and using the deterministic bound $|f(x)| \leq 1$, we conclude 
\be 
|\mathbb{E}[f(X_{T})] - \mathbb{E}[f(Y)]| \leq k\,\varepsilon_{n} + 2\delta_{n} = 2\sqrt{2k\,\omega_{n}}.
\ee 
\end{proof}

\subsection{Lower Bounds on Variances}
For a polynomial $f(x) = \sum_{u} c_{u} x^{u}$, define the $L^{2}$ norm of the coefficients by $\|f\|_{coeff,p} = \left(\sum_{u} c_{u}^{p}\right)^{\frac{1}{p}}$. The following is an immediate consequence of Corollary \ref{cor:inhomogeneous-case}:

\begin{theorem} \label{ThmVarLowerBound}
Fix a degree-$k$ polynomial $f \, : \: \So{n} \mapsto \mathbb{R}$ with mean $\haarOdist_{n}(f) = 0$ under the Haar measure. Then there exists a polynomial $f^{*} \, : \: \So{n} \mapsto \mathbb{R}$ such that, for all $x \in \So{n}$,
\be 
f(x) = f^{*}(x),
\ee 
and 
\be 
\Var_{\haarOdist_{n}}(f^{*}) \geq \frac{\|f^{*}\|_{coeff,2}^{2}}{(k+1)n^{k}}.
\ee 
\end{theorem}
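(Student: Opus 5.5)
Since $f$ is a polynomial restricted to $\So{n}$, the plan is to choose a canonical representative $f^{*}$ of the restriction $f|_{\So{n}}$ and bound its variance from below using the representation theory of $\So{n}$. The statement is said to follow from Corollary \ref{cor:inhomogeneous-case}, so the work is to produce $f^{*}$ in a form where that corollary applies, and then convert its output into the stated coefficient-norm bound.

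\textbf{Step 1: split by degree.} I would decompose the space $\mathcal{P}_{\le k}$ of polynomials of degree at most $k$ in the entries $X[i,j]$ by homogeneous degree. Via the left--right action $(g,h)\cdot X=gXh^{-1}$ of $\So{n}\times\So{n}$, the homogeneous degree-$m$ part $\mathrm{Sym}^m(\mathbb{R}^n\otimes\mathbb{R}^n)$ decomposes into isotypic pieces. Restricting to $\So{n}$, the kernel of the restriction map consists exactly of multiples of the relations $X^\top X-I$ (which also encode relations like $\sum_{i,j}X_{ij}^2=n$). I would take $f^{*}$ to be the image of $f$ under the orthogonal projection, in the coefficient inner product, onto the orthogonal complement of this kernel inside $\mathcal{P}_{\le k}$. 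Then $f^{*}=f$ on $\So{n}$, and $f^{*}$ has minimal coefficient norm among all polynomials agreeing with $f$ there.

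\textbf{Step 2: compare the two inner products.} On the complement, the coefficient inner product and the $L^2(\haarOdist_n)$ inner product are both $\So{n}\times\So{n}$-invariant. By Schur's lemma they are therefore proportional on each irreducible block, up to multiplicity mixing. The core estimate, which I take from the general projection inequality Theorem \ref{thm:compact-group-variance} and the exact calculations summarized in Corollary \ref{cor:inhomogeneous-case}, is
\[
\mathbb{E}_{\haarOdist_n}[g^2]\ge n^{-m}\|g\|_{coeff,2}^2
\]
for $g$ homogeneous of degree $m$ in the complement. This should hold because the matrix coefficients in degree $m$ have Haar second moments of order $n^{-m}$, as a single entry has $\mathbb{E}[X_{ij}^2]=1/n$. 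Since the mean of $f$ is zero, the trivial component vanishes, so the variance equals the second moment.

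\textbf{Step 3: handle inhomogeneity.} Write $f^{*}=\sum_{m=0}^{k}g_m$. The different degrees are not $L^2$-orthogonal on $\So{n}$, because degree $m$ and degree $m+2$ share isotypic components, and this is the source of the factor $k+1$. Using Cauchy--Schwarz, $\|f^{*}\|_{coeff}^2=\sum_m\|g_m\|^2$. I would then group by irreducible type and apply the projection inequality so that, on each isotypic component, the contribution of $f^{*}$ is at least the largest of its degree pieces. This loses at most a factor $(k+1)$ and gives
\[
\Var(f^{*})\ge \frac{\sum_m n^{-m}\|g_m\|^2}{k+1}\ge \frac{\|f^{*}\|_{coeff,2}^2}{(k+1)n^{k}}.
\]

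\textbf{Main obstacle.} I expect the hard part to be the per-irreducible comparison constant in Step 2. It requires showing that after projecting away the ideal, no invariant subspace has anomalously small $L^2$ norm relative to its coefficient norm. This needs explicit knowledge of the branching, for example via harmonic polynomials and Weingarten-type computations, and I would rely on Corollary \ref{cor:inhomogeneous-case} to supply it.
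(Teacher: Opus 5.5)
There is a genuine gap, and it sits exactly in the ``core estimate'' you outsource. Step 2 asserts that the coefficient inner product is $\So{n}\times\So{n}$-invariant, so that Schur's lemma can compare it with the $L^2$ inner product block by block. For the monomial-coefficient norm $\sum_u c_u^2$ with which you define $f^{*}$, this is false beyond degree one. Composing with the $45^\circ$ rotation in the $(1,2)$-plane, $X\mapsto gX$, turns $X_{11}X_{21}$ (squared norm $1$) into $\pm\tfrac12(X_{11}^2-X_{21}^2)$ (squared norm $\tfrac12$).

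The invariant structure is the Hilbert--Schmidt inner product on $\mathrm{End}(\mathcal U)$, where $\mathcal U=\bigoplus_{m=0}^{k}(\mathbb R^n)^{\otimes m}$, once $f$ is written as a matrix coefficient $f(g)=\langle T,\rho(g)\rangle_{\mathrm{HS}}$ with $\rho(g)=\bigoplus_{m} g^{\otimes m}$. On symmetric tensors this is the Bombieri-weighted norm $\sum_u c_u^2\,u!/|u|!$, not $\sum_u c_u^2$. That is the paper's proof:
\begin{itemize}
\item $f^{*}=\langle T^\sharp,\rho(\cdot)\rangle_{\mathrm{HS}}$, with $T^\sharp$ the HS-orthogonal projection of $T$ onto $\mathcal C^\perp$; it agrees with $f$ on $\So{n}$ because $f$ has mean zero;
\item the coefficients of $f^{*}$ are taken to be the entries of $T^\sharp$ (the graded coefficient representation);
\item the bound is Theorem~\ref{thm:compact-group-variance} together with $\dim\mathcal U\le (k+1)n^{k}$.
\end{itemize}
Your $f^{*}$, the minimal monomial-norm representative, is a different object, so Corollary~\ref{cor:inhomogeneous-case} does not apply to it as stated. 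Transferring that corollary to your norm costs up to a factor $k!$, since in general one only has $\|F_{T^\sharp}\|_{coeff,2}^{2}\le k!\,\|T^\sharp\|_{\mathrm{HS}}^{2}$ (each degree-$m$ monomial $x^u$ collects $m!/u!$ tensor entries).

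Step 3 has a second, independent gap. The ideal of polynomials vanishing on $\So{n}$ is not graded (it contains $\sum_{i,j}X_{ij}^2-n$). So the homogeneous pieces $g_m$ of your $f^{*}$ need not satisfy any per-degree bound. The claim that, on each isotypic component, $f^{*}$ contributes at least as much as its largest degree piece is also unsupported. Pieces of different degrees in the same isotypic component can cancel on $\So{n}$ (e.g.\ $X_{11}$ and $X_{11}\sum_{i,j}X_{ij}^2/n$), and ruling out near-cancellation quantitatively is the entire content of the theorem.

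In the paper the factor $k+1$ does not come from a maximum over degrees. Copies of an irreducible $V_\lambda$ arising in different degrees sit together in a single multiplicity space $M_\lambda$ of $\mathcal U_{\mathbb C}$. With $B_\lambda$ the partial trace of the diagonal block over $M_\lambda$, the exact identities $\|\Psi(T)\|_{L^2}^2=\sum_{\lambda\neq\mathrm{triv}}d_\lambda^{-1}\|B_\lambda\|_{\mathrm{HS}}^2$ and $\|T^\sharp\|_{\mathrm{HS}}^2=\sum_{\lambda\neq\mathrm{triv}}m_\lambda^{-1}\|B_\lambda\|_{\mathrm{HS}}^2$ absorb all cross-degree interference. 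The factor $(k+1)n^k$ is then just the crude bound $d_\lambda/m_\lambda\le\dim\mathcal U$.

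A smaller inaccuracy: the ideal generated by $X^\top X-I$ cuts out $O(n)$, not $\So{n}$. For $n=3$, the degree-two identities $\mathrm{cof}(X)=X$ hold on $\So{3}$ but do not lie in that ideal. The paper's route never needs a description of this kernel.
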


Here $f^{*}$ is obtained from the projection $T^\sharp$ defined in Equation \eqref{EqStandardizingProjection}, after choosing a degree-at-most-$k$ representative of $f$. As it is somewhat unusual, we explain why the ``reduced" form $f^{*}$ is required. While two polynomials with distinct coefficients can never agree over all of $\mathbb{R}^{d}$, two such polynomials \textit{can} agree on $\So{n}$. A typical example is the polynomial $f(x) = \sum_{i,j=1}^{n} x[i,j]^{2}$, which is equal to the constant polynomial $g(x) \equiv n$ over $\So{n}$ (and in particular $f$ has variance 0 with respect to Haar measure). For this reason, it is impossible to obtain a variance lower bound in terms of the norm of the coefficients for ``unreduced" polynomials, but such a bound turns out to hold for these ``reduced" polynomials. 

\section{Proof of Our Main Theorem (Theorem \ref{MainThm})} \label{SecMainThmProof}

Let us first restate Theorem~\ref{MainThm} here for convenience.

\begin{theoremnonum}[Theorem~\ref{MainThm}, copied]\label{MainThm:copy}
Fix a sequence $k = k(n)$ and a sequence  $T = T(n)$ satisfying  $T(n) = \omega(nk \cdot (k+\log n) \cdot \log n)$, 
$\kacdist_{n,T,x}$ and $\haarOdist_{n}$ are weakly $k(n)$-indistinguishable in the sense of Definition \ref{DefInd}.
\end{theoremnonum}

Fix a polynomial $f$; since replacing $f$ by the ``reduced" polynomial $f^{*}$ appearing in the statement of Theorem \ref{ThmVarLowerBound} doesn't change its value on $\So{n}$, we can assume without loss of generality that $f$ is reduced in this sense (see Equation \eqref{EqStandardizingProjection} for an explicit formula for this reduction). Similarly, by simply dividing through the entire polynomial by a constant, we can assume without loss of generality that $\|f^{*}\|_{coeff,2}^{2} = 1$.

By Theorem \ref{ThmVarLowerBound}, 
\be \label{FinalFinalLB}
\Var_{\haarOdist_{n}}(f) \geq \frac{1}{(k+1)n^{k}}.
\ee 

Next, we compare this to the differences. By Cauchy--Schwarz and the graded coefficient representation used in Corollary \ref{cor:inhomogeneous-case}, $\|f \|_{coeff,1} \leq \sqrt{k+1}\,n^{k}$. By Theorem \ref{LemmaDiffUpperBoundMonomial} and linearity of expectations, there exists a universal constant $C > 0$ so that
\be \label{FinalFinalUB}
|\kacdist_{n,T,x}(f)- \haarOdist_{n}(f)|^{2} \leq 8 C_{\mathrm{diam}}\,(k+1)k^{2} n^{2k}\,
n^{- \left\lfloor \frac{T}{C n (k+\log(n))\log(n)} \right\rfloor}
\ee 

Comparing Inequalities \eqref{FinalFinalLB} and \eqref{FinalFinalUB}, we see that
\be \label{FinalFinalratio}
\frac{|\kacdist_{n,T,x}(f)- \haarOdist_{n}(f)|^{2}}{\Var_{\haarOdist_{n}}(f)} \leq 8C_{\mathrm{diam}}\, (k+1)^2k^2n^{3k}\,
n^{- \left\lfloor \frac{T}{C n (k+\log(n))\log(n)} 
\right\rfloor}
\ee 
The polynomial prefactor contributes only $(k+1)^{2}k^{2}n^{3k}$, while the contraction factor contributes $n^{-r}$ with
\[
r=\left\lfloor \frac{T}{C n (k+\log n)\log n}\right\rfloor .
\]
meaning that the quantity on the LHS in equation~\ref{FinalFinalratio} tends to $0$ superpolynomially fast if $T = \omega(nk \cdot (k+\log n) \cdot \log n)$.

\section{An Application to a Fast Johnson-Lindenstrauss Transform} \label{SecJL}

In this section, we show that the matrix obtained from running Kac's walk gives a Johnson--Lindenstrauss transform at the usual target dimension, provided the walk is run long enough to control the low moments used below. This confirms the Johnson--Lindenstrauss conjecture from \cite{FastKacConj}, up to the extra logarithmic factor in the running time appearing in Theorem \ref{thm:kac-jl-transform}.

Let $\Pi_\ell:\mathbb{R}^n\to\mathbb{R}^\ell$ be the coordinate projection onto the first $\ell$ coordinates. For $G\in\So{n}$, define
\be \label{EqJLAGDef}
A_G=\sqrt{\frac n\ell}\,\Pi_\ell G.
\ee
For a unit vector $v\in \mathbb{S}^{n-1}$, define the squared-norm distortion
\be \label{EqJLZvDef}
Z_v(G)=\|A_Gv\|_2^2-1=\frac n\ell\sum_{a=1}^{\ell}\langle e_a,Gv\rangle^2-1.
\ee
Thus $Z_v(G)$ is a degree-$2$ polynomial in the entries of $G$. For a finite set $\mathcal X=\{x_1,\ldots,x_N\}\subset\mathbb{R}^n$, define
\be \label{EqJLVXDef}
\mathcal V_{\mathcal X}=\left\{\frac{x_i-x_j}{\|x_i-x_j\|_2}:1\leq i<j\leq N,\ x_i\neq x_j\right\}.
\ee
For $v=(x_i-x_j)/\|x_i-x_j\|_2$, the identity \eqref{EqJLZvDef} gives
\be \label{EqJLPairDistortion}
Z_v(G)=\frac{\|A_G(x_i-x_j)\|_2^2}{\|x_i-x_j\|_2^2}-1.
\ee
Consequently, controlling $|Z_v(G)|$ for every $v\in\mathcal V_{\mathcal X}$ is exactly the Johnson--Lindenstrauss condition for the point set $\mathcal X$.

The application of our estimates to Kac's walk to the JL transform is the following.

\begin{theorem}[Kac's walk applied to the Johnson--Lindenstrauss transform] \label{thm:kac-jl-transform}
There is a universal constant $C_{\mathrm{JL}}<\infty$ such that the following holds. Let $n\geq 2$, let $x\in\So{n}$, and let $M\sim\kacdist_{n,T,x}$ be the endpoint of $T$ steps of Kac's walk started from $x$. Let $\mathcal X\subset\mathbb{R}^n$ have $N\geq2$ points, and let $0<\varepsilon,\delta<1/2$. Set
\be \label{EqJLKacRChoice}
r=\left\lceil \log\frac{N}{\delta}\right\rceil,
\ee
and assume $4r<n$. Let $\ell$ be an integer satisfying
\be \label{EqJLKacEllChoice}
C_{\mathrm{JL}}\varepsilon^{-2}r\leq \ell<n.
\ee

If
\be \label{EqJLKacTimeAssumption}
T\geq
C_{\mathrm{JL}}\,n(4r+\log n)\left(r\log\frac{n}{\varepsilon}+\log\frac{N}{\delta}\right),
\ee
then, with probability at least $1-\delta$ over $M\sim\kacdist_{n,T,x}$,
\be \label{EqJLKacConclusion}
(1-\varepsilon)\|x_i-x_j\|_2^2
\leq
\|A_M(x_i-x_j)\|_2^2
\leq
(1+\varepsilon)\|x_i-x_j\|_2^2
\ee
for every pair $1\leq i<j\leq N$.

In particular, for fixed $\varepsilon$ and $N/\delta\leq n^A$, with $A<\infty$ fixed, the usual Johnson--Lindenstrauss target dimension $\ell=O(\varepsilon^{-2}\log(N/\delta))=O_A(\varepsilon^{-2}\log n)$ \vnote{Isn't the JL target dimension $\epsilon^{-2}\log N$? EDIT: Ah, I see what's going on, $N$ is set to polynomial in $n$. But see below.} is achieved by taking $T\geq C_A n\log^3 n$. \vnote{I am worried that this is not the best illustration of our bound. In particular, when $N$ is polynomial in $n$, the JL embedding dimension $\ell$ is already $O(\log n)$ and so applying the matrix already only costs $O(n\log n)$. Shouldn't we be thinking about very large point sets of size, say $2^{n^{\epsilon'}}$ for some constant $0 < \epsilon' < 1$.}
\end{theorem}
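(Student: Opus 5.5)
The plan is a moment-method proof in the style of \cite{DasguptaGupta2003ElementaryProofJL}, with exponential moments replaced by a fixed even moment of $Z_v$. For each $v\in\mathcal V_{\mathcal X}$ we will bound $\mathbb{E}_{M\sim\kacdist_{n,T,x}}[Z_v(M)^{2r}]$ and then apply Markov's inequality. This gives
\[
\mathbb{P}[|Z_v(M)|>\varepsilon]\leq \varepsilon^{-2r}\,\mathbb{E}[Z_v(M)^{2r}],
\]
and a union bound over the at most $N^2/2$ vectors in $\mathcal V_{\mathcal X}$ finishes, by \eqref{EqJLPairDistortion}. Since $Z_v$ has degree $2$, $Z_v^{2r}$ is a polynomial of degree $4r$ in the entries of $G$. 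This is why the hypothesis $4r<n$ appears: it lets us invoke Theorem \ref{LemmaDiffUpperBoundMonomial} with $k=4r$.

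\textbf{Step 1 (Haar moment).} Under $G\sim\haarOdist_n$, $Gv$ is uniform on $\mathbb{S}^{n-1}$. Hence $\frac{n}{\ell}\sum_{a\le \ell}\langle e_a,Gv\rangle^2$ has the law of $\frac{n}{\ell}\,\mathrm{Beta}(\ell/2,(n-\ell)/2)$. Standard sub-exponential (chi-square ratio) moment bounds then give
\[
\mathbb{E}_{\haarOdist_n}[Z_v^{2r}]\leq \bigl(C r/\ell\bigr)^{r}+\bigl(Cr/\ell\bigr)^{2r}.
\]
With $\ell\geq C_{\mathrm{JL}}\varepsilon^{-2}r$ this is at most $(\varepsilon^2/e^{2})^{r}$, so $\varepsilon^{-2r}$ times it is $\le e^{-2r}\le (\delta/N)^2$. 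This is the content of the appendix lemma (Lemma \ref{lem:jl_haar_moment}).

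\textbf{Step 2 (Kac versus Haar).} We expand $Z_v^{2r}$ into monomials of degree at most $4r$ in the entries of $G$. Each monomial involves at most $4r$ columns. The key quantity is the coefficient $\ell^1$ mass, which is at most
\[
\Bigl(1+\tfrac{n}{\ell}\sum_{a\le\ell}\sum_{b,c}|v_b v_c|\Bigr)^{2r}\leq (1+n^2)^{2r}\leq n^{5r},
\]
using $\|v\|_1\le\sqrt n$. Theorem \ref{LemmaDiffUpperBoundMonomial} also applies to lower-degree monomials, by padding or by applying it with a smaller $k$, and in every case the error bound is monotone in $k$. So by linearity,
\[
|\kacdist_{n,T,x}(Z_v^{2r})-\haarOdist_n(Z_v^{2r})|\leq n^{5r}\cdot 2\sqrt{8r\,C_{\mathrm{diam}}}\; n^{-\frac12\lfloor T/(Cn(4r+\log n)\log n)\rfloor}.
\]
We need $\varepsilon^{-2r}$ times this to be at most $(\delta/N)^2$. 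It suffices that the floor exceed $2\bigl(5r+2r\log_n(1/\varepsilon)+2\log_n(N/\delta)\bigr)+O(1)$. Multiplying this threshold by $Cn(4r+\log n)\log n$ gives exactly the form of \eqref{EqJLKacTimeAssumption}, namely $n(4r+\log n)\bigl(r\log(n/\varepsilon)+\log(N/\delta)\bigr)$, with the $\log n$ absorbed. We also need $n>N_0$; small $n$ is handled by enlarging $C_{\mathrm{JL}}$, noting that for bounded $n$ the required $T$ still makes the contraction factor tiny.

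\textbf{Step 3 (conclusion).} Summing the two contributions gives $\mathbb{P}[|Z_v|>\varepsilon]\le 2(\delta/N)^2$. The union bound over $\binom N2$ pairs gives failure probability at most $\delta^2\le\delta$. The ``in particular'' clause follows by plugging in $r=O_A(\log n)$, $\ell<n$, and fixed $\varepsilon$, which gives $T=O_A(n\log^3 n)$.

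The main obstacle is Step 2's bookkeeping rather than any new idea. We must make sure that the coefficient mass of $Z_v^{2r}$, which is not reduced, times the per-monomial bound still fits under the stated $T$. We also have to confirm that Theorem \ref{LemmaDiffUpperBoundMonomial} may be applied monomial-by-monomial with $k$ replaced by the actual column count, which is at most $4r$. The fallback if the crude $n^{5r}$ bound is too lossy is to absorb it into the constant $C_{\mathrm{JL}}$, since it only contributes $O(r\log n)$ to the exponent. Step 1 is a standard calculation.
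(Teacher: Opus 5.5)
Your proposal is correct and is essentially the paper's own argument. It uses a $2r$-th moment Markov bound with the Beta/Haar moment estimate of Lemma \ref{lem:jl_haar_moment}, then transfers to Kac's walk by expanding $Z_v^{2r}$ into monomials of degree at most $4r<n$ with coefficient $\ell^1$-mass at most $(1+n^2)^{2r}$ and applying Theorem \ref{LemmaDiffUpperBoundMonomial} termwise, and finishes with a union bound over $\mathcal V_{\mathcal X}$ (the paper packages the Markov and union-bound steps as Proposition \ref{prop:jl_moment_transfer}). Two harmless remarks: since $\omega_n$ already contains a factor $k=4r$, the per-monomial prefactor is $8r\sqrt{2C_{\mathrm{diam}}}$ rather than your $2\sqrt{8rC_{\mathrm{diam}}}$, which only costs a polynomial factor in $r$ absorbed into $C_{\mathrm{JL}}$; and small $n$ needs no separate argument, because $8C_{\mathrm{JL}}<C_{\mathrm{JL}}\varepsilon^{-2}r\leq\ell<n$ forces $n>N_0$ once $C_{\mathrm{JL}}\geq N_0/8$.
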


When $x=I_n$, the map $v\mapsto A_Mv$ can be applied by successively applying the $T$ elementary rotations and then projecting to the first $\ell$ coordinates. Thus the arithmetic cost per vector is $O(T+\ell)$; in the polynomial-size regime described above this is $\tilde{O}(n)$, compared with the $O(n\ell)$ cost of applying a dense $\ell\times n$ projection matrix.
The proof is given in Section \ref{SecProofFastJL}.

\section{Open Questions}

\subsection{Immediate Open Questions} 

There are several natural open questions that come out of our work:

\begin{itemize}
    \item Can the $k$-column mixing result be extended to TV distance and not just Wasserstein? This appears plausible, and we leave it for future work.
    \item Can we tighten the polynomial indistinguishability result? First, recall that our Theorem~\ref{MainThm} stops being interesting for $k \approx \sqrt{n}$: can we extend this all the way to $k \approx n$? Secondly, to be indistinguishable against degree-$k$ polynomials, we need the walk to run for roughly $nk^2$ steps: is that necessary?
    \item Finally, a very natural question is whether low-degree polynomials of the {\em spectrum} of a Kac matrix can be a good distinguisher. It seems plausible that the answer is no: the basic intuition here is just that the spectral measure is a ``nice" function of only n ``pretty independent" numbers, so should mix in $n \mathsf{poly}(\log n)$ steps. However, making this argument formally appears more difficult, since the spectrum is not quite a Markov chain (in contrast to the columns that we analyze here, which are).
\end{itemize}

\subsection{Trapdoored Functions}


While this work is about trapdoor matrices, one could investigate trapdoor functions more generally. What we really want is a random walk on functions, so that (i) it is easy to evaluate (trapdoored) functions based on short walks, and furthermore (ii) the functions can ``hide" in a bigger class of functions that cannot be evaluated quickly. That is, we have a ``big" space of circuits that are mostly ``slow," then we hide a ``small" space of circuits that are all ``fast" inside it. For matrix multiplication, something magic happens in part (ii) --- compositions of matrices are matrices, so the hiding is essentially automatic. Are there other classes of functions for which such hiding is automatic? Do any of them have worse complexity than $n^{2}$?

In one extreme, when the class of functions is all functions say from $\{0,1\}^n \to \{0,1\}^n$, the cryptographic notion of pseudorandom functions~\cite{DBLP:journals/jacm/GoldreichGM86} provides a solution. But how about, say, when the class of functions is all functions that can be computed by circuits of size $n^{100}$? Can circuits of size $n^{5}$ hide amongst them?

\vnote{TODO for Aaron}

\vnote{Mention Aaron's concurrent example. Also mention RRJW.}

\anote{I'm very interested in this, and have a paper draft ``on hold" in this direction. However, I think it is somewhat subsumed in Vinod's existing heuristic paper, so I'm inclined to delete it.]}

\anote{The following is all very vague, but interesting at least to me. This paper sort of suggests that (i) maybe it is OK to just let some functions of the chain mix, and (ii) that this is a meaningful speedup. That seems to be against most of the conventional wisdom I've seen, and it would be interesting to see if there were some sort of nice theory about when (and if) this phenomenon is real. My personal case studies here are other MCMC algorithms that sort of look like big matrices - e.g. probabilistic dimension reduction. Actually I think this phenomenon is basically already in evidence if you look at simulations of e.g. the LPM in Rastelli et al. ]}

\bibliographystyle{alpha}
\bibliography{ref}

\appendix


\section{Some Geometric Calculations}

We collect some facts about the geometry of $\So{n}$ and $\mathrm{St}(n,k)$. We suspect everything in this section is well-known, but include the details for completeness.

\subsection{Exact row identities on the Stiefel manifold} \label{SecPainfulAlg}

The local contraction proof in Lemma \ref{LemmaLocalCont} uses the following secant identity.

\begin{lemma}[Secant row-sum identity]
\label{lem:row_sum_secant_identity}
Let $X,Y\in \mathrm{St}(n,k)$ and set $K=Y-X$. Define
\[
q_{ij}=r_i(X)\cdot r_j(K)-r_j(X)\cdot r_i(K),\qquad 1\leq i<j\leq n.
\]
Then
\be\label{EqSecantRowSumIdentity}
\sum_{i<j}q_{ij}^{2}
=\|K\|_{F}^{2}-\mathrm{Tr}\bigl((K^{\top}X)^{2}\bigr).
\ee
Moreover,
\be\label{EqSecantRowSumLower}
\sum_{i<j}q_{ij}^{2}
\geq \|K\|_{F}^{2}-\frac14\|K\|_{F}^{4}.
\ee
In particular, if $\|K\|_{F}\leq 1$, then
\[
\sum_{i<j}q_{ij}^{2}\geq \frac34\|K\|_{F}^{2}.
\]
\end{lemma}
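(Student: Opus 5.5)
The identity is a Lagrange/Binet--Cauchy type expansion, so I will expand $\sum_{i<j}q_{ij}^2$ directly. Write $x_i=r_i(X)$, $k_i=r_i(K)$, so $q_{ij}=x_i\cdot k_j-x_j\cdot k_i$. The summand is symmetric in $(i,j)$ and vanishes on the diagonal, so
\[
\sum_{i<j}q_{ij}^{2}=\tfrac12\sum_{i,j}\bigl(x_i\cdot k_j-x_j\cdot k_i\bigr)^{2}
=\sum_{i,j}(x_i\cdot k_j)^{2}-\sum_{i,j}(x_i\cdot k_j)(x_j\cdot k_i).
\]
Now identify each double sum as a trace. Since $x_i\cdot k_j=(XK^\top)_{ij}$, the first sum is $\|XK^\top\|_F^2=\mathrm{Tr}(KX^\top XK^\top)=\mathrm{Tr}(KK^\top)=\|K\|_F^2$, using $X^\top X=I_k$. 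The second sum is $\sum_{i,j}(XK^\top)_{ij}(XK^\top)_{ji}=\mathrm{Tr}((XK^\top)^2)=\mathrm{Tr}(XK^\top XK^\top)=\mathrm{Tr}((K^\top X)^2)$ by cyclicity. Together these give \eqref{EqSecantRowSumIdentity}. This step is pure bookkeeping.

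For \eqref{EqSecantRowSumLower}, I need $\mathrm{Tr}((K^\top X)^2)\le \frac14\|K\|_F^4$, and here the Stiefel constraint on $Y$ enters. From $Y^\top Y=X^\top X=I_k$ and $Y=X+K$ we get $X^\top K+K^\top X+K^\top K=0$. Set $S=K^\top X$. Then
\[
S+S^\top=-K^\top K,
\]
so the symmetric part of $S$ is $-\frac12K^\top K$. Write $S=-\frac12K^\top K+A$ with $A$ skew-symmetric. The cross term vanishes because the trace of (symmetric)$\times$(skew) is zero, and $\mathrm{Tr}(A^2)=-\|A\|_F^2$, so
\[
\mathrm{Tr}(S^2)=\tfrac14\mathrm{Tr}((K^\top K)^2)-\|A\|_F^2\le\tfrac14\|K^\top K\|_F^2\le\tfrac14\|K\|_F^4,
\]
where the last step is submultiplicativity of the Frobenius norm. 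This gives \eqref{EqSecantRowSumLower}. The final claim then follows at once: if $\|K\|_F\le1$, then $\|K\|_F^4\le\|K\|_F^2$, so the right-hand side is at least $\frac34\|K\|_F^2$.

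The only step that needs care is the second one. A naive Cauchy--Schwarz bound gives $\mathrm{Tr}(S^2)\le\|S\|_F^2\le\|K\|_F^2$, which is useless. The key idea is to use the constraint on $Y$ to show that the symmetric part of $K^\top X$ is quadratically small, while the skew part contributes with a favorable sign.
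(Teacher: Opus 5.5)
Your proposal is correct and follows essentially the same route as the paper. For the identity, the paper writes the sum as $\tfrac12\|A-A^\top\|_F^2$ with $A=KX^\top$ instead of expanding the double sum, and for the bound it uses the same symmetric/skew-symmetric decomposition of $K^\top X$, with symmetric part $-\tfrac12K^\top K$.
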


\begin{proof}
Set $A=KX^\top\in\mathbb{R}^{n\times n}$. Then $A_{ij}=r_i(K)\cdot r_j(X)$, and hence
\[
\sum_{i<j}q_{ij}^{2}
=\sum_{i<j}(A_{ij}-A_{ji})^2
=\frac12\|A-A^\top\|_F^2.
\]
Using $X^\top X=I_k$ and cyclicity of trace,
\[
\frac12\|A-A^\top\|_F^2
=\|A\|_F^2-\mathrm{Tr}(A^2)
=\|K\|_F^2-\mathrm{Tr}\bigl((K^\top X)^2\bigr),
\]
which proves \eqref{EqSecantRowSumIdentity}.

Since $Y=X+K$ and $X^\top X=Y^\top Y=I_k$,
\[
X^\top K+K^\top X=-K^\top K.
\]
Thus, writing $B=K^\top X=A_0+S_0$ with $A_0$ skew-symmetric and $S_0$ symmetric, we have $S_0=-K^\top K/2$. Therefore
\[
\mathrm{Tr}(B^2)=\mathrm{Tr}(A_0^2)+\mathrm{Tr}(S_0^2)\leq \mathrm{Tr}(S_0^2)
=\frac14\|K^\top K\|_F^2\leq \frac14\|K\|_F^4.
\]
Plugging this into \eqref{EqSecantRowSumIdentity} proves \eqref{EqSecantRowSumLower}.
\end{proof}

\subsection{Tail Bounds Supnorm of Haar-distributed Points on Sphere}

The following estimate is standard but somewhat tedious to write down. Since we only need rather rough estimates, we give a sketch:

\begin{lemma}
\label{lem:haar_sphere_supnorm}
Let $Y$ be uniform on $\mathbb{S}^{n-1}$. There is a universal constant $C>0$ such that
for all $\delta\in(0,1)$,
\[
\mathbb{P}\!\left[\|Y\|_\infty^2 \ge C\,\frac{\log(n/\delta)}{n}\right]\le \delta.
\]
\end{lemma}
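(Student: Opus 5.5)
The plan is to use the Gaussian representation of the uniform distribution on the sphere, apply a union bound over coordinates, and control the normalization separately. Write $Y = G/\|G\|_2$ with $G=(G_1,\ldots,G_n)$ a vector of i.i.d.\ standard Gaussians. Then
\[
\|Y\|_\infty^2 = \frac{\max_i G_i^2}{\|G\|_2^2},
\]
so it suffices to bound the numerator from above and the denominator from below, each with failure probability at most $\delta/2$.

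For the numerator, use the standard Gaussian tail bound $\mathbb{P}[G_1^2\geq s]\leq 2e^{-s/2}$ together with a union bound over the $n$ coordinates. This gives $\mathbb{P}[\max_i G_i^2\geq 2\log(4n/\delta)]\leq \delta/2$.

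For the denominator, $\|G\|_2^2$ is a $\chi^2_n$ variable. The Laurent--Massart lower-tail bound $\mathbb{P}[\chi^2_n\leq n-2\sqrt{nx}]\leq e^{-x}$ applies with $x=\log(2/\delta)$. When $\log(2/\delta)\leq n/16$, this gives $\|G\|_2^2\geq n/2$ with probability at least $1-\delta/2$. On the intersection of the two good events,
\[
\|Y\|_\infty^2\leq \frac{4\log(4n/\delta)}{n}\leq C\,\frac{\log(n/\delta)}{n}
\]
for a suitable universal $C$, using $\log(4n/\delta)\leq 3\log(n/\delta)$ for $n\geq 2$.

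The remaining regime is the only point needing care. If $\log(2/\delta)>n/16$, then $\log(n/\delta)/n$ is bounded below by a universal constant, so choosing $C\geq 16$ makes $C\log(n/\delta)/n\geq 1$. Since $\|Y\|_\infty^2\leq 1$ always, the probability in question is then zero or trivially at most $\delta$, possibly after enlarging $C$ to absorb constants so the inequality becomes strict. The case $n=1$ is trivial in the same way. Beyond this case split there is no real obstacle; the argument is the routine union bound plus chi-square concentration.
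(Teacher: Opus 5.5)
Your proposal is correct and follows the same route as the paper's proof sketch: write $Y=G/\|G\|_2$, bound $\max_i G_i^2$ by a union bound with Gaussian tails, and bound $\|G\|_2^2$ below by a $\chi^2$ lower-tail estimate; your explicit constants and the case split at $\log(2/\delta)>n/16$ supply details the paper leaves implicit. One small correction to an aside: the case $n=1$ is not trivial but actually false as stated, since there $\|Y\|_\infty^2=1$ while $C\log(1/\delta)\to 0$ as $\delta\to 1$, so the lemma should be read with $n\geq 2$, which is exactly what your main argument uses.
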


\begin{proof}[Proof sketch]
Write $Y=Z/\|Z\|_2$ with $Z_i\stackrel{iid}{\sim}N(0,1)$. By e.g. Theorem 4.1 of \cite{Devroye1986}, $Y$ is distributed according to the Haar measure on the sphere. We can use (i) a union bound and standard Gaussian tail estimates to get an upper bound on
$\max_i |Z_i|$, and (ii) similar $\chi^{2}$ tail estimates to get a lower-tail bound for $\|Z\|_2$.
\end{proof}

\begin{lemma}[Haar projection tail]\label{lem:haar_projection_tail}
Let $Y$ be uniform on $\mathbb{S}^{n-1}$, and let $V\subseteq\mathbb{R}^{n}$ be a fixed subspace with $\dim(V)=k$. There is a universal constant $C>0$ such that, for all $s\geq 1$,
\[
\mathbb{P}\left[\|P_VY\|^{2}> C\,\frac{k+s}{n}\right]\leq e^{-s}.
\]
\end{lemma}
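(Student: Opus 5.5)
The plan is to reduce to a chi-square tail via the Gaussian representation, as in the sketch of Lemma \ref{lem:haar_sphere_supnorm}. By rotation invariance of the uniform measure on $\mathbb{S}^{n-1}$, I may assume $V=\mathrm{span}(e_1,\ldots,e_k)$. I then write $Y=Z/\|Z\|_2$ with $Z_i$ i.i.d.\ $N(0,1)$, so that
\[
\|P_VY\|^{2}=\frac{\sum_{i\le k}Z_i^2}{\|Z\|_2^2}.
\]
The task is then to bound the numerator from above and the denominator from below, each with exponentially small failure probability.

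For the numerator I use the Laurent--Massart upper tail for a $\chi^2_k$ variable:
\[
\mathbb{P}\Bigl[\textstyle\sum_{i\le k}Z_i^2\geq k+2\sqrt{ks}+2s\Bigr]\leq e^{-s}.
\]
Since $k+2\sqrt{ks}+2s\leq 2k+3s$, the numerator is at most $3(k+s)$ except on an event of probability at most $e^{-s}$. For the denominator, the lower tail for $\chi^2_n$ gives $\mathbb{P}[\|Z\|^2\leq n/2]\leq e^{-n/16}$.

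I then split into two cases according to the size of $s$. If $s\leq n/16$, then $e^{-n/16}\leq e^{-s}$. On the intersection of the two good events, $\|P_VY\|^2\leq 6(k+s)/n$, and the total failure probability is at most $2e^{-s}$. If instead $s>n/16$, then $C(k+s)/n>C/16\geq 1$ once $C\geq 16$. Since $\|P_VY\|^2\leq 1$ always, the probability in question is zero and there is nothing to prove.

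It remains to absorb the factor $2$ in the failure probability. I apply the first case with $s'=s+\log 2$ in place of $s$. The case split is then made on $s'$ rather than $s$: for $s'\le n/16$ the bound holds with constant $6$, and for $s'>n/16$ the trivial bound takes over, which requires $C(k+s)/n\ge 1$ whenever $s+\log 2>n/16$. Because $s\geq1$, this is still guaranteed by taking $C\ge 32$. Also because $s\ge 1$, we have $k+s'\leq 2(k+s)$. Taking $C=\max(32,12)=32$ finishes the proof. There is no real obstacle here; the only care needed is the bookkeeping that keeps $C$ universal and independent of $k$, $n$ and $s$.
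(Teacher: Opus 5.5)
Your proof is correct and follows the same route as the paper's sketch. Both use the Gaussian representation $Y=Z/\|Z\|$ with a rotation to a coordinate subspace, an upper $\chi^2_k$ tail for the numerator, the event $\|Z\|^2> n/2$ (which fails with probability at most $e^{-cn}$), and the trivial case when $s$ is of order $n$ or larger. You additionally make the constants explicit (Laurent--Massart, $c=1/16$, and the shift $s'=s+\log 2$ to absorb the factor $2$, giving $C=32$), which the paper leaves implicit.
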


\begin{proof}[Proof sketch]
Write $Y=Z/\|Z\|$ with $Z\sim N(0,I_n)$, and rotate coordinates so that $V=\mathrm{span}(e_1,\ldots,e_k)$. Then $\|P_VZ\|^2$ is $\chi^2_k$ and $\|Z\|^2$ is $\chi^2_n$. Standard upper-tail bounds for $\chi^2_k$, together with the lower-tail bound $\mathbb{P}[\|Z\|^2<n/2]\leq e^{-cn}$, imply the displayed estimate after increasing $C$. If $s$ is larger than a constant multiple of $n$, the threshold is already at least $1$ after increasing $C$, so the estimate is trivial.
\end{proof}

\subsection{Metric comparisons}\label{SecLiftPts}

\begin{lemma}[Metric comparison on the Stiefel manifold]
\label{lem:stiefel_metric_comparison}
Let $D$ be the Riemannian distance on $\mathrm{St}(n,k)$ induced by the Frobenius
inner product from the embedding $\mathrm{St}(n,k)\subset \mathbb{R}^{n\times k}$.
Then, for all $X,Y\in \mathrm{St}(n,k)$,
\begin{equation}
\label{eq:stiefel_chordal_leq_riemannian}
\|X-Y\|\leq D(X,Y).
\end{equation}
Moreover, there are universal constants $\rho_D>0$ and $C_D<\infty$, independent
of $n$ and $k$, such that whenever $\|X-Y\|\leq \rho_D$,
\begin{equation}
\label{eq:stiefel_riemannian_local_upper}
D(X,Y)\leq \|X-Y\|+C_D\|X-Y\|^2.
\end{equation}
\end{lemma}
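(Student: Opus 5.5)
The first inequality \eqref{eq:stiefel_chordal_leq_riemannian} is the trivial direction, and I will dispatch it first. For any piecewise $C^1$ curve $\gamma:[0,1]\to\mathrm{St}(n,k)$ from $X$ to $Y$, its Riemannian length is $\int_0^1\|\gamma'(t)\|\,dt$. This is at least $\|\int_0^1\gamma'(t)\,dt\|=\|Y-X\|$ by the triangle inequality for integrals in $\mathbb{R}^{n\times k}$. Taking the infimum over curves gives $D(X,Y)\geq\|X-Y\|$.

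For the local upper bound \eqref{eq:stiefel_riemannian_local_upper}, my plan is to build an explicit curve in $\mathrm{St}(n,k)$ from $X$ to $Y$ whose length is $\|X-Y\|+O(\|X-Y\|^2)$, with constants independent of $n,k$. Write $K=Y-X$ and take the straight segment $c(t)=X+tK$, pushed back to the manifold by the polar retraction $\pi(Z)=Z(Z^\top Z)^{-1/2}$. This is well defined as long as $Z$ has full column rank. Set $\gamma(t)=\pi(c(t))$; since $\pi$ fixes Stiefel points, $\gamma(0)=X$ and $\gamma(1)=Y$.

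The key computation is to write $c(t)^\top c(t)=I_k+E(t)$ with
\[
E(t)=t(X^\top K+K^\top X)+t^2K^\top K=(t^2-t)K^\top K,
\]
using the identity $X^\top K+K^\top X=-K^\top K$ established in the proof of Lemma~\ref{lem:row_sum_secant_identity}. Hence $\|E(t)\|_{op}\leq\frac14\|K\|^2$, which is the crucial dimension-free smallness. Now set $S(t)=(I+E(t))^{-1/2}$. Then
\[
\gamma'(t)=K\,S(t)+c(t)\,S'(t),
\]
and I will bound $\|S(t)-I\|_{op}\leq C\|K\|^2$ and $\|S'(t)\|_F\leq C\|E'(t)\|_F\leq C'\|K\|^2$. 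The latter holds because $E'(t)=(2t-1)K^\top K$ has Frobenius norm at most $\|K\|^2$, and the derivative of $A\mapsto(I+A)^{-1/2}$ is uniformly bounded in operator norm near $0$ (via the power series or the Daleckii--Krein formula), with all bounds dimension-free. Since $\|c(t)\|_{op}\leq 1+\|K\|$, this gives
\[
\|\gamma'(t)\|\leq\|K\|\,(1+C\|K\|^2)+C''\|K\|^2.
\]
Integrating over $t\in[0,1]$ yields $D(X,Y)\leq\|K\|+C_D\|K\|^2$ for $\|K\|\leq\rho_D$.

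The main obstacle is making every constant independent of $n$ and $k$. In particular, a naive Frobenius bound such as $\|c(t)\|_F\leq\sqrt{k}$ must be avoided; I will instead use operator norms on the factors and Frobenius norms only on the single factor carrying the $\|K\|^2$ smallness, via $\|AB\|_F\leq\|A\|_{op}\|B\|_F$. Choosing $\rho_D$ small, say $\rho_D=1/2$, keeps $\|E\|_{op}\leq1/16$, so the matrix square-root estimates hold uniformly.
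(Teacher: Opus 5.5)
Your proposal is correct and follows the paper's proof essentially step for step: the same polar-normalized chord $\gamma(t)=Z_tG_t^{-1/2}$ with $G_t=I_k-t(1-t)K^\top K$, the same dimension-free bookkeeping (operator norms on all factors except the one carrying $\|K\|^2$), and integration of the speed bound. The only difference is cosmetic. The paper skips the general Fr\'echet-derivative estimate for $S'(t)$ by noting that $G_t$ and $\dot G_t$ commute, since both are polynomials in $K^\top K$; this gives the explicit formula $\dot\gamma(t)=KG_t^{-1/2}+\frac{1-2t}{2}Z_tK^\top KG_t^{-3/2}$. The paper also uses the sharper bound $\|Z_t\|_{\mathrm{op}}\le 1$ in place of your $1+\|K\|$.
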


\begin{proof}
We first prove \eqref{eq:stiefel_chordal_leq_riemannian}. If
$\gamma:[0,1]\to \mathrm{St}(n,k)$ is any piecewise $C^1$ curve with
$\gamma(0)=X$ and $\gamma(1)=Y$, then
\begin{equation}
\label{eq:curve_length_bounds_chord}
\|X-Y\|
=
\left\|\int_0^1 \dot\gamma(t)\,dt\right\|
\leq
\int_0^1 \|\dot\gamma(t)\|\,dt.
\end{equation}
Taking the infimum over all such curves gives
\eqref{eq:stiefel_chordal_leq_riemannian}.

We now prove the upper bound \eqref{eq:stiefel_riemannian_local_upper} by constructing a specific path from
$X$ to $Y$ and bounding  its length.

Set
\begin{equation}
\label{eq:metric_comparison_K_delta}
K=Y-X,\qquad \delta=\|K\|.
\end{equation}
We will assume $\delta\leq 1$. Since $X^\top X=Y^\top Y=I_k$ and $Y=X+K$, we have
\begin{equation}
\label{eq:metric_comparison_secant_relation}
X^\top K+K^\top X+K^\top K=0.
\end{equation}
Let
\begin{equation}
\label{eq:metric_comparison_chord}
Z_t=X+tK,\qquad 0\leq t\leq 1,
\end{equation}
and write its Gram matrix as
\begin{equation}
\label{eq:metric_comparison_gram}
G_t=Z_t^\top Z_t
=
I_k-t(1-t)K^\top K,
\end{equation}
where the last equality follows from
\eqref{eq:metric_comparison_secant_relation}. Since
$t(1-t)\leq 1/4$ and $\|K^\top K\|_{\mathrm{op}}\leq \delta^2$, every eigenvalue of
$G_t$ lies in $[1-\delta^2/4,1]$. In particular, if $\delta\leq 1$, then $G_t$ is
positive definite for every $t\in[0,1]$.

We claim that this allows us to define a path in $\mathrm{St}(n,k)$ via the following polar normalization:
\begin{equation}
\label{eq:metric_comparison_polar_path}
\gamma(t)=Z_tG_t^{-1/2},
\end{equation}
where $G_t^{-1/2}$ denotes the principal inverse square root. The map
$G\mapsto G^{-1/2}$ is smooth on the positive definite cone, and the polar-factor map
$Z\mapsto Z(Z^\top Z)^{-1/2}$ is smooth on the open set of full-column-rank matrices;
see \cite[Chapters~6 and~8]{high:FM}. Thus $\gamma$ is a smooth path. Moreover,
\[
\gamma(t)^\top \gamma(t)
=
G_t^{-1/2}G_tG_t^{-1/2}
=
I_k,
\]
so $\gamma(t)\in \mathrm{St}(n,k)$ for every $t$. Since $G_0=G_1=I_k$, we also have
$\gamma(0)=X$ and $\gamma(1)=Y$.

It remains to bound the length of this path. Because $G_t$ is a polynomial in the fixed
symmetric matrix $K^\top K$, the matrices $G_t$ and $\dot G_t$ commute. Differentiating
\eqref{eq:metric_comparison_polar_path} along this commuting family gives
\begin{equation}
\label{eq:metric_comparison_gamma_derivative}
\dot\gamma(t)
=
K G_t^{-1/2}
+
\frac{1-2t}{2}\,Z_t K^\top K G_t^{-3/2}.
\end{equation}
Equivalently, this follows by diagonalizing $K^\top K$ and applying the scalar derivative
of $u\mapsto u^{-1/2}$ to each eigenvalue; this is a special case of the matrix-function
calculus discussed in \cite[Chapter~6]{high:FM}.

From the eigenvalue bound following \eqref{eq:metric_comparison_gram}, for
$\delta\leq 1$ we have
\begin{equation}
\label{eq:metric_comparison_inverse_bounds}
\|G_t^{-1/2}\|_{\mathrm{op}}
\leq
(1-\delta^2/4)^{-1/2}
\leq
1+\delta^2,
\qquad
\|G_t^{-3/2}\|_{\mathrm{op}}
\leq
(1-\delta^2/4)^{-3/2}
\leq
2.
\end{equation}
Also,
\begin{equation}
\label{eq:metric_comparison_Z_K_bounds}
\|Z_t\|_{\mathrm{op}}^2=\|G_t\|_{\mathrm{op}}\leq 1,
\qquad
\|K^\top K\|_F\leq \|K\|^2=\delta^2.
\end{equation}
Combining \eqref{eq:metric_comparison_gamma_derivative},
\eqref{eq:metric_comparison_inverse_bounds}, and
\eqref{eq:metric_comparison_Z_K_bounds}, and using
$\|AB\|_F\leq \|A\|_F\|B\|_{\mathrm{op}}$, gives
\begin{equation}
\label{eq:metric_comparison_speed_bound}
\|\dot\gamma(t)\|
\leq
\delta(1+\delta^2)+|1-2t|\delta^2.
\end{equation}
Therefore
\begin{equation}
\label{eq:metric_comparison_length_bound}
D(X,Y)
\leq
\int_0^1 \|\dot\gamma(t)\|\,dt
\leq
\delta(1+\delta^2)+\frac12\delta^2.
\end{equation}
Since $\delta\leq 1$, this implies
\[
D(X,Y)
\leq
\delta+2\delta^2
=
\|X-Y\|+2\|X-Y\|^2.
\]
Thus \eqref{eq:stiefel_riemannian_local_upper} holds, for example, with
$\rho_D=1$ and $C_D=2$.
\end{proof}

\subsection{Deterministic non-expansion in Frobenius Norm}\label{AppendixSillyCalc1}

\begin{lemma}[Deterministic Frobenius non-expansion]
\label{lem:one_step_stability}
Let $X,Y\in \mathrm{St}(n,k)$. Run one step of the coupled update
\[
X'=R_{ij}(\theta)X,\qquad Y'=R_{ij}(\theta+\alpha)Y,
\]
with $\alpha=\widehat\alpha_{ij}(X,Y)$ chosen as in \eqref{EqAlphaOptimal}. Then deterministically
\[
\|X'-Y'\|\leq \|X-Y\|.
\]
\end{lemma}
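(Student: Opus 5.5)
The plan is to reuse the exact identity \eqref{EqExactDecrease} from the proof of Lemma \ref{LemmaLocalCont}. That identity is purely algebraic, and it holds for every pair $X,Y\in\mathrm{St}(n,k)$, not only nearby ones. First, I would note that $R_{ij}(\theta)$ is orthogonal and acts only on rows $i,j$. Applying the common rotation $R_{ij}(-\theta)$ to both $X'$ and $Y'$ therefore preserves Frobenius distance, so
\[
\|X'-Y'\| = \|R_{ij}(\theta)X - R_{ij}(\theta+\alpha)Y\| = \|X - R_{ij}(\alpha)Y\|.
\]
This reduces the claim to a statement about the single rotation $R_{ij}(\alpha)$, independent of $\theta$.

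Second, I would expand $\|X-R_{ij}(\alpha)Y\|^2$. Only rows $i,j$ change, and $R_{ij}(\alpha)$ preserves the Frobenius norm of $B_Y^{ij}$. Hence
\[
\|X-R_{ij}(\alpha)Y\|^2-\|X-Y\|^2 = -2\bigl(\langle B_X^{ij},R^{(2)}_\alpha B_Y^{ij}\rangle_F - \langle B_X^{ij},B_Y^{ij}\rangle_F\bigr),
\]
which is \eqref{EqTwoRowDistanceFormula} once \eqref{EqTwoRowOverlap} is checked. Here I need to be careful about the sign convention for $R^{(2)}_\alpha$ and $J$ relative to $R_{ij}$, so that the overlap really equals $p_{ij}\cos\alpha+q_{ij}\sin\alpha$. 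If the sign is off, I would replace $q_{ij}$ by $-q_{ij}$ in the definition of $\widehat\alpha$, and the argument is otherwise unchanged.

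Third, with $\alpha=\widehat\alpha_{ij}(X,Y)$, the overlap equals $\sqrt{p_{ij}^2+q_{ij}^2}$ whenever $p_{ij}^2+q_{ij}^2>0$. In that case the decrease is $\Delta_{ij}=2(\sqrt{p_{ij}^2+q_{ij}^2}-p_{ij})\ge 0$, because $\sqrt{p^2+q^2}\ge |p|\ge p$. In the degenerate case $p_{ij}=q_{ij}=0$ we take $\alpha=0$, and the distance is unchanged. In both cases $\|X'-Y'\|^2\le\|X-Y\|^2$, and taking square roots gives the claim.

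The only real obstacle is the sign and convention check in the second step: verifying that $R_{ij}(\alpha)$ restricted to rows $i,j$ acts as the $2\times 2$ matrix whose overlap with $B_X^{ij}$ is given by $p_{ij}$ and $q_{ij}$ with the stated $J$. Everything else is immediate. Nowhere does the argument use $\|X-Y\|\le 1$ or any row-norm bound, which is exactly why the bound holds deterministically for all pairs.
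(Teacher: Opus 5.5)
Your proof is correct and follows the paper's argument: you reduce via the common inverse rotation $R_{ij}(-\theta)$ to $\|X-R_{ij}(\alpha)Y\|$ and then use \eqref{EqTwoRowDistanceFormula}. The only difference is that the paper phrases the last step as comparing the minimizing $\widehat\alpha_{ij}$ against the admissible choice $\alpha=0$, while you compute the decrease $2(\sqrt{p_{ij}^2+q_{ij}^2}-p_{ij})\geq 0$ explicitly; the sign check you flag also goes through as stated, since the $(i,j)$ block of $R_{ij}(\alpha)$ is exactly $R^{(2)}_\alpha=\cos\alpha\, I_2+\sin\alpha\, J$.
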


\begin{proof}
Applying the common inverse rotation $R_{ij}(-\theta)$ to both updated points gives
\[
\|X'-Y'\|=\|X-R_{ij}(\alpha)Y\|.
\]
By \eqref{EqTwoRowDistanceFormula} and \eqref{EqAlphaOptimal}, the choice $\alpha=\widehat\alpha_{ij}(X,Y)$ minimizes $\|X-R_{ij}(\alpha)Y\|$ over all angle shifts in the selected coordinate plane. Comparing to the admissible choice $\alpha=0$ gives
\[
\|X'-Y'\|\leq \|X-Y\|.
\]
\end{proof}

\section{Miscellaneous Estimates}

We present useful estimates that we believe are essentially well-known, including proofs for completeness. The notation in this section is independent of the rest of the paper.

\subsection{Telescoping Sum Inequality}

\begin{lemma}\label{LemmaTelescopingSum}
Let $X=(X_{1},\dots,X_{n})$ and $Y=(Y_{1},\dots,Y_{n})$ be vectors in $\mathbb{R}^{n}$. Then
\[
\prod_{i=1}^{n} X_{i}-\prod_{i=1}^{n} Y_{i}
=
\sum_{k=1}^{n}
(X_{k}-Y_{k})
\left(\prod_{i=1}^{k-1} X_{i}\right)
\left(\prod_{i=k+1}^{n} Y_{i}\right).
\]
\end{lemma}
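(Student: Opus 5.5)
I would prove the identity by induction on $n$, or equivalently by writing the difference as a telescoping sum of "hybrid" products. For $0\le k\le n$, define the hybrid
\[
H_k=\left(\prod_{i=1}^{k} X_{i}\right)\left(\prod_{i=k+1}^{n} Y_{i}\right),
\]
where empty products equal $1$. Then $H_n=\prod_{i} X_i$ and $H_0=\prod_i Y_i$. The left-hand side is therefore $H_n-H_0=\sum_{k=1}^{n}(H_k-H_{k-1})$, which is a telescoping sum.

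The key step is to compute each increment. The products $H_k$ and $H_{k-1}$ share the common factor $\left(\prod_{i=1}^{k-1}X_i\right)\left(\prod_{i=k+1}^{n}Y_i\right)$. They differ only in the $k$-th slot, where $H_k$ carries $X_k$ and $H_{k-1}$ carries $Y_k$. Factoring out the common part gives
\[
H_k-H_{k-1}=(X_k-Y_k)\left(\prod_{i=1}^{k-1}X_i\right)\left(\prod_{i=k+1}^{n}Y_i\right).
\]
Summing over $k$ yields the displayed identity.

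There is essentially no obstacle here. The only point needing care is the boundary terms $k=1$ and $k=n$, where one of the two products is empty and must be read as $1$. With that convention the argument is purely algebraic and holds over any commutative ring.

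For how the lemma is used in the paper: entries of orthogonal matrices satisfy $|X_i|,|Y_i|\le 1$. So the triangle inequality applied to the identity gives $\left|\prod X_i-\prod Y_i\right|\le\sum_k|X_k-Y_k|\le n\max_k|X_k-Y_k|$, which is the bound $k\,\varepsilon_n$ invoked in the proof of Theorem~\ref{LemmaDiffUpperBoundMonomial}.
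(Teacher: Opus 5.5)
Your proof is correct and is essentially the paper's argument: the paper also writes the difference as the telescoping sum of the increments $\prod_{i\le k}X_i\prod_{i>k}Y_i-\prod_{i\le k-1}X_i\prod_{i\ge k}Y_i$ and factors out the common product from each summand. Your closing remark on how the identity gives the bound $k\,\varepsilon_n$ in Theorem~\ref{LemmaDiffUpperBoundMonomial} (using that all entries are bounded by $1$) also matches the paper's footnote.
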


\begin{proof}
Write
\be 
\prod_{i=1}^{n} X_{i}-\prod_{i=1}^{n} Y_{i}
=
\sum_{k=1}^{n}
\left(
\prod_{i=1}^{k} X_{i}\prod_{i=k+1}^{n} Y_{i}
-
\prod_{i=1}^{k-1} X_{i}\prod_{i=k}^{n} Y_{i}
\right),
\ee
in which all intermediate terms cancel. Factoring out
\be
\prod_{i=1}^{k-1} X_{i}\prod_{i=k+1}^{n} Y_{i}
\ee
from each summand finishes the proof.
\end{proof}

\subsection{Contraction with occasional bad non-expansive steps}

\begin{lemma}
\label{lem:B1_multiplicative}
Let $(\mathcal{L}_t)_{t=0}^{T}$ be a nonnegative process adapted to a filtration $(\mathscr F_t)_{t=0}^{T}$. Let $\mathcal G\subseteq \mathscr F_0$ be an initial sigma-field, and let $B$ be a nonnegative $\mathcal G$-measurable random variable such that $\mathcal{L}_0\leq B$. Fix $\kappa\in(0,1]$, and let $G_0,\ldots,G_{T-1}$ be events with $G_t\in\mathscr F_t$.

Assume that, for every $t=0,\ldots,T-1$:
\begin{enumerate}
\item (\textbf{Deterministic non-expansion}) $\mathcal{L}_{t+1}\leq \mathcal{L}_t$ almost surely.
\item (\textbf{Good-step contraction}) On $G_t$,
\[
\mathbb{E}[\mathcal{L}_{t+1}\mid \mathscr F_t]\leq (1-\kappa)\mathcal{L}_t.
\]
\item (\textbf{Conditional bad-event control}) For deterministic numbers $\delta_t\geq0$,
\[
\mathbb{P}(G_t^c\mid \mathcal G)\leq \delta_t.
\]
\end{enumerate}
Then
\be\label{EqConditionalVariableRateConclusion}
\mathbb{E}[\mathcal{L}_T\mid\mathcal G]
\leq \left((1-\kappa)^T+\sum_{t=0}^{T-1}\delta_t\right)B.
\ee
\end{lemma}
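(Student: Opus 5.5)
The plan is to set up a one-step recursion for the conditional means $a_t \equiv \mathbb{E}[\mathcal{L}_t\mid\mathcal G]$ and then unroll it. The point is that on a bad step we lose nothing relative to $B$, because non-expansion keeps $\mathcal{L}_t$ below its initial bound. Concretely, iterating hypothesis 1 gives $\mathcal{L}_t\leq \mathcal{L}_0\leq B$ almost surely for every $t$. This is the only place the deterministic non-expansion is used, but it is essential: it converts a bad event into an additive error proportional to $B$ instead of something that could compound.

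First I would establish a pointwise conditional bound at each step. Since $G_t\in\mathscr F_t$, the indicator $\mathbf 1_{G_t}$ is $\mathscr F_t$-measurable. On $G_t$, hypothesis 2 gives $\mathbb{E}[\mathcal{L}_{t+1}\mid\mathscr F_t]\leq(1-\kappa)\mathcal{L}_t$. On $G_t^c$, hypothesis 1 gives $\mathbb{E}[\mathcal{L}_{t+1}\mid\mathscr F_t]\leq \mathcal{L}_t=(1-\kappa)\mathcal{L}_t+\kappa\mathcal{L}_t$. Combining the two cases and using $\mathcal{L}_t\leq B$,
\[
\mathbb{E}[\mathcal{L}_{t+1}\mid\mathscr F_t]\leq (1-\kappa)\mathcal{L}_t+\kappa\,\mathcal{L}_t\mathbf 1_{G_t^c}\leq (1-\kappa)\mathcal{L}_t+\kappa B\,\mathbf 1_{G_t^c}.
\]

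Next I would condition on $\mathcal G$. Because $\mathcal G\subseteq\mathscr F_0\subseteq\mathscr F_t$, the tower property applies. Because $B$ is $\mathcal G$-measurable, it can be pulled out of the conditional expectation. Hypothesis 3 then yields
\[
a_{t+1}\leq (1-\kappa)a_t+\kappa B\,\mathbb{P}(G_t^c\mid\mathcal G)\leq (1-\kappa)a_t+\kappa\delta_t B .
\]
Unrolling this from $a_0\leq B$ gives
\[
a_T\leq (1-\kappa)^T B+\kappa B\sum_{t=0}^{T-1}(1-\kappa)^{T-1-t}\delta_t .
\]
Finally, the crude bound $\kappa(1-\kappa)^{T-1-t}\leq 1$ gives \eqref{EqConditionalVariableRateConclusion}.

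I do not expect a real obstacle. The only care needed is bookkeeping: check the measurability of $\mathbf 1_{G_t}$ with respect to $\mathscr F_t$, and check that $B$ can be pulled out of the $\mathcal G$-conditional expectations. Both follow directly from the stated assumptions $G_t\in\mathscr F_t$ and $B\in\mathcal G\subseteq\mathscr F_0$. Integrability is automatic on bounded state spaces; in general it follows from $0\leq\mathcal{L}_t\leq B$, working with nonnegative conditional expectations throughout.
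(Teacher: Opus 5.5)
Your proposal is correct and follows the paper's proof essentially step for step. Both arguments use non-expansion to get $\mathcal{L}_t\leq B$, derive a one-step bound $\mathbb{E}[\mathcal{L}_{t+1}\mid\mathscr F_t]\leq(1-\kappa)\mathcal{L}_t+cB\,\mathbf 1_{G_t^c}$, condition on $\mathcal G$, and unroll the recursion while crudely bounding the geometric weights. The only difference is that you keep $c=\kappa$ where the paper takes $c=1$, and you then discard that factor via $\kappa(1-\kappa)^{T-1-t}\leq 1$.
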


\begin{proof}
Deterministic non-expansion implies $\mathcal{L}_t\leq B$ for every $t$. The first two assumptions imply
\[
\mathbb{E}[\mathcal{L}_{t+1}\mid\mathscr F_t]
\leq (1-\kappa)\mathcal{L}_t+B\mathbf 1_{G_t^c}.
\]
Taking conditional expectations with respect to $\mathcal G$ and using the conditional bad-event bound gives
\[
\mathbb{E}[\mathcal{L}_{t+1}\mid\mathcal G]
\leq (1-\kappa)\mathbb{E}[\mathcal{L}_t\mid\mathcal G]+\delta_t B.
\]
Since $\mathcal{L}_0\leq B$, iterating this recursion proves \eqref{EqConditionalVariableRateConclusion}; in the iteration we simply bound the geometric weights multiplying the error terms by $1$.
\end{proof}

\section{Burn-in Estimates} \label{SecBurninEstimatesAll}

We give a sequence of estimates, with the goal of showing that certain functions of Kac's walk are reasonably well-mixed after a short burn-in period.

\subsection{Burn-in for 1-column Walk}\label{AppLemmaNormBounds}

\begin{lemma}
\label{lem:onecol_supnorm_burnin}
Let $\{X_t\}$ be the $1$-column Kac walk on $\mathbb{S}^{n-1}$ started from any $X_0$.
For every $B<\infty$ there is a constant $C_B<\infty$ such that, for every $\eta\in(0,1/2)$ and every
\[
t\geq C_B n\log(n/\eta),
\]
one can couple $X_t$ to a Haar-distributed point $Y\in\mathbb{S}^{n-1}$ so that
\[
\mathbb{P}[\|X_t-Y\|>n^{-B}]\leq \eta.
\]
\end{lemma}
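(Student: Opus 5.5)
The plan is to obtain the coupling from a total-variation estimate for the $1$-column walk, so that the event $\{\|X_t-Y\|>n^{-B}\}$ is contained in the event $\{X_t\neq Y\}$ of a maximal coupling. The main input is the sharp $O(n\log n)$ total-variation analysis of \cite{pillaiKac17}. As far as I can tell, that argument gives more than a mixing time: after a burn-in of order $n\log n$ it gives exponential decay at rate of order $1/n$. I would extract it in the form
\[
\|\mathcal{L}(X_t)-\mathrm{Unif}(\mathbb{S}^{n-1})\|_{TV}\leq C n^{a}e^{-ct/n}\qquad (t\geq C n\log n),
\]
with universal constants $a,c,C$ and uniformly over $X_0$. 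The step I expect to be the main obstacle is checking that \cite{pillaiKac17} really yields this form. If it only gives $\tau_{\mathrm{mix}}(1/4)=O(n\log n)$, then submultiplicativity gives only $O(n\log n\log(1/\eta))$. That is worse than the required $n\log(n/\eta)$ when $\eta$ is tiny.

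Given the displayed bound, take $t\geq C_B n\log(n/\eta)$ with $C_B$ large enough that $Cn^{a}e^{-ct/n}\leq \eta$. The maximal coupling then gives $\mathbb{P}[X_t\neq Y]\leq\eta$, which is stronger than the claim since the distance is exactly $0$ off that event. With this route the constant does not actually depend on $B$. Allowing $C_B$ to depend on $B$ is what makes the fallback route below possible.

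If \cite{pillaiKac17} does not give the rate-$1/n$ decay directly, I would take a two-stage route that uses the TV result only to get close and then contracts. \textbf{Stage 1} runs for $t_1=C n\log n$ steps. Using \cite{pillaiKac17} together with Lemma~\ref{lem:haar_sphere_supnorm}, it produces a coupling of $X_{t_1}$ with a Haar point $Y_{t_1}$ satisfying two properties with probability at least $1-\eta/2$. First, $\|X_{t_1}-Y_{t_1}\|\leq 1$. Second, for all times in the next stage, both chains have $\|\cdot\|_{\infty}^{2}\leq C\log(n/\eta)/n$. The sup-norm property follows from a union bound over the stationary chain $Y_s$ and the burn-in estimate for $X_s$.

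\textbf{Stage 2} runs the two chains under the coupling of Lemma~\ref{LemmaLocalCont} with $k=1$. I would not use the crude factor $M$ there. Instead, the exact decrease \eqref{EqExactDecrease} has denominator at most $3(x_i^2+x_j^2+y_i^2+y_j^2)$, which gives
\[
\Delta_{ij}\gtrsim \frac{q_{ij}^{2}}{x_i^{2}+x_j^{2}+y_i^{2}+y_j^{2}}.
\]
On flat configurations this should be summed against Lemma~\ref{lem:row_sum_secant_identity} to get a one-step contraction of order $1/n$, up to the $\log(n/\eta)$ factor from the sup norm. I would then apply Lemma~\ref{lem:B1_multiplicative}, taking $\mathcal{L}_t=\|X_t-Y_t\|$, the non-expansion from Lemma~\ref{lem:one_step_stability}, and $G_t$ the flatness event. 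This should reduce $\mathbb{E}\|X_t-Y_t\|$ to at most $n^{-B}\eta/2$ within a further $C_B n\log(n/\eta)$ steps, and Markov's inequality then finishes the argument. The weak point of this fallback is the loss of a $\log$ factor from using sup-norm flatness. To avoid it, I would try to show that the weighted sum $\sum_{i<j}q_{ij}^2/(x_i^2+x_j^2)$ is comparable to $n\|K\|^2$ whenever $K$ is not concentrated on the few large coordinates of $x$.
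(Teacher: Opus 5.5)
\textbf{There is a genuine gap.} Your primary route rests on an estimate you have not established, and that estimate is the whole content of the lemma. What is needed is a coupling bound decaying like $e^{-ct/n}$ with at most a polynomial prefactor, so that accuracy $n^{-B}$ and failure probability $\eta$ are reached together at $t\asymp n\log(n/\eta)$. The headline result of \cite{pillaiKac17} is a mixing-time bound. As you note, a constant-level total-variation bound at time $O(n\log n)$ plus submultiplicativity only gives $O(n\log n\log(1/\eta))$. A polynomially small total-variation bound at time $Cn\log n$, uniform in the start, would suffice by submultiplicativity, but that too would have to be extracted from the internals of that proof.

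Your fallback does not close the gap, for three reasons.
\begin{itemize}
\item Stage~1 asks for events of probability $1-\eta/2$ at the $\eta$-independent time $Cn\log n$. A total-variation estimate cannot supply this for small $\eta$ unless the distance at that time is already at most $\eta/2$, which is the primary bound again.
\item The flatness of $X_s$ during Stage~2 has no non-circular source. The row-norm burn-in of Lemma~\ref{lem:kcol_rowmax_burnin} is derived from the very lemma you are proving, and getting flatness from total variation again requires the primary bound.
\item As you concede, Stage~2 with $M\asymp\log(n^{B}/\eta)/n$ has rate $\kappa\asymp 1/(n\log(n^{B}/\eta))$, so it needs about $n\log^{2}(n^{B}/\eta)$ steps. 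The weighted-sum repair cannot hold uniformly. At a Haar-typical $x$, write $y_i=s_ix_i$, so $q_{ij}=x_ix_j(s_j-s_i)$, and take $s-1$ essentially supported on a coordinate with $x_{i_0}^2\asymp\log n/n$. Then $\sum_{i<j}\Delta_{ij}\asymp\|K\|^2/x_{i_0}^2$, a one-step Frobenius rate of order $1/(n\log n)$. The case you exclude is real and would need a separate multi-step argument.
\end{itemize}

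\textbf{What the paper does instead.} The paper avoids both total variation and the Frobenius Lyapunov function. For $k=1$, the coupling of Lemma~\ref{LemmaLocalCont} is exactly the proportional coupling of \cite{pillaiKac17}. The proof quotes that paper's contraction estimate (its Lemma~3.3) together with a Haar tail bound for $\min_i|Y[i]|$ (its Lemma~3.5). This gives $\mathbb{P}[\|X_t-Y\|_\infty>2e^{-t/(6n)}]\le(2n+1)e^{-t/(18n)}$, and the paper then uses $\|\cdot\|\le\sqrt n\,\|\cdot\|_\infty$.

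\textbf{A self-contained fix.} You can keep the same coupling and change the Lyapunov function.
\begin{itemize}
\item Set $d_i=X_t[i]^2-Y_t[i]^2$ and $\Phi_t=\sum_i d_i^2$. A coupled step on the pair $(i,j)$ sends $(d_i,d_j)$ to $(d_i+d_j)(\cos^2\psi,\sin^2\psi)$ with $\psi$ uniform.
\item Since $\sum_i d_i=0$, this gives $\mathbb{E}[\Phi_{t+1}\mid X_t,Y_t]=\bigl(1-\frac{n+2}{2n(n-1)}\bigr)\Phi_t$ from every configuration, with no flatness assumption.
\item Once every coordinate has been updated, which fails with probability at most $ne^{-2t/n}$, the coupling forces $X_t[i]Y_t[i]\ge0$ for all $i$. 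Hence $\|X_t-Y_t\|^2\le\sum_i|d_i|\le\sqrt{n\Phi_t}$.
\item Markov's inequality with $\Phi_0\le2$ then yields the lemma with $C_B$ linear in $B$.
\end{itemize}
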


\begin{proof}
By Lemma 3.3 of \cite{pillaiKac17} and Markov's inequality, it is possible to couple $X_t,Y$ so that
\[
\mathbb{P}\left[\frac{\|X_t-Y\|_\infty}{\min_i |Y[i]|}>2e^{-t/(3n)}\right]\leq e^{-t/3}.
\]
By Lemma 3.5 of the same paper,
\[
\mathbb{P}\left[\min_i |Y[i]|< e^{-t/(6n)}\right]\leq 2ne^{-t/(18n)}.
\]
Thus
\[
\mathbb{P}[\|X_t-Y\|_\infty>2e^{-t/(6n)}]
\leq (2n+1)e^{-t/(18n)}.
\]
Choosing $C_B$ large enough makes the last probability at most $\eta$ and also makes $\sqrt n\,2e^{-t/(6n)}\leq n^{-B}$.
\end{proof}

\subsection{Burn-in for \texorpdfstring{$k$-}{k-}Column Walk}

\begin{lemma}
\label{lem:kcol_rowmax_burnin}
Let $\{X_t\}$ be the $k$-column walk on $\mathrm{St}(n,k)$ started from any $X_0$, with $1\leq k<n$. There are universal constants $C,c_0<\infty$ such that, for all $\delta\in(0,1/2)$ and all
\[
t\geq c_0 n\log(n/\delta),
\]
we have
\[
\mathbb{P}\!\left[\|X_t\|_{row,\infty}^2 \ge C\,\frac{k+\log(n/\delta)}{n}\right]\le \delta.
\]
\end{lemma}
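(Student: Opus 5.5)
The plan is to reduce the $k$-column row-norm bound to the $1$-column burn-in of Lemma \ref{lem:onecol_supnorm_burnin} by a transpose/time-reversal trick, and then to use Lemma \ref{lem:haar_projection_tail}. The point is that the burn-in time must not depend on $k$. Fix $t$ and write
\[
X_t=R_tR_{t-1}\cdots R_1X_0,
\]
where the $R_s=R_{i_sj_s}(\theta_s)$ are i.i.d. random rotations. Then the $i$-th row satisfies
\[
r_i(X_t)^\top=X_0^\top w^{(i)}, \qquad w^{(i)}=R_1^\top R_2^\top\cdots R_t^\top e_i .
\]
Each $R_s^\top=R_{i_sj_s}(-\theta_s)$ has the same law as $R_s$, and the factors are i.i.d. Reversing their order does not change the law of the product. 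Hence, for each fixed $i$, $w^{(i)}$ has exactly the law of the $1$-column Kac walk on $\mathbb{S}^{n-1}$ started at $e_i$ and run for $t$ steps. Only the marginal law at the fixed time $t$ matters here, so no process-level statement is needed.

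Next, let $V$ be the $k$-dimensional column span of $X_0$. Since $X_0^\top X_0=I_k$, we have $\|r_i(X_t)\|^2=\|X_0^\top w^{(i)}\|^2=\|P_Vw^{(i)}\|^2$. Apply Lemma \ref{lem:onecol_supnorm_burnin} with $B=1$ and $\eta=\delta/(2n)$. This requires $t\geq C_1 n\log(2n^2/\delta)$, which is implied by $t\geq c_0n\log(n/\delta)$ for a suitable universal $c_0$, because $\log(2n^2/\delta)\leq 3\log(n/\delta)$ when $\delta<1/2$. The lemma gives a Haar point $Y^{(i)}$ on $\mathbb{S}^{n-1}$ with $\|w^{(i)}-Y^{(i)}\|\leq n^{-1}$ outside an event of probability at most $\delta/(2n)$. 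On the good event, $P_V$ is a contraction, so
\[
\|P_Vw^{(i)}\|^2\leq 2\|P_VY^{(i)}\|^2+2n^{-2}.
\]

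Now apply Lemma \ref{lem:haar_projection_tail} to $Y^{(i)}$ with $s=\log(2n/\delta)\geq1$. This gives $\|P_VY^{(i)}\|^2\leq C(k+\log(2n/\delta))/n$ outside an event of probability at most $\delta/(2n)$. Combining the two steps, $\|r_i(X_t)\|^2\leq C'(k+\log(n/\delta))/n$ except on an event of probability at most $\delta/n$. Here the term $2n^{-2}$ is absorbed into $C'\log(n/\delta)/n$. A union bound over the $n$ rows then gives the claimed bound on $\|X_t\|_{row,\infty}^2$ with failure probability at most $\delta$. The couplings for different $i$ need not be related, since the union bound uses only the marginal law of each $w^{(i)}$.

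The only step needing care is the first one: the identification of the law of $w^{(i)}$ with the $1$-column walk. This is what makes the burn-in depend only on $n\log(n/\delta)$ and not on $k$. The $k$-dependence then enters solely through the Haar projection tail, with the fixed subspace $V$ determined by $X_0$.
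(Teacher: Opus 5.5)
Your proposal is correct and matches the paper's proof essentially step for step. Both identify $X_t^\top e_a$ in law with the $1$-column walk from $e_a$ by transposing and reversing the i.i.d. rotations, then couple to a Haar point via Lemma \ref{lem:onecol_supnorm_burnin} with $\eta=\delta/(2n)$, apply the projection tail of Lemma \ref{lem:haar_projection_tail} with $s=\log(2n/\delta)$, and finish with a union bound over rows. The only difference is your choice $B=1$ instead of the paper's $B=10$, which is harmless because $2n^{-2}$ is still absorbed into $C'\log(n/\delta)/n$.
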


\begin{proof}
Write $X_t=Q_tX_0$, where $Q_t$ is the product of the elementary rotations used by the walk, and let $V_0=\mathrm{span}(X_0)$. For a fixed row $a$,
\[
\|r_a(X_t)\|^2=\|X_0^\top Q_t^\top e_a\|^2=\|P_{V_0}Q_t^\top e_a\|^2.
\]
The random vector $Q_t^\top e_a$ has the same distribution as the $1$-column Kac walk started from $e_a$: transposition reverses the order of the rotations and replaces each angle by its negative, which does not change the joint law.

Apply Lemma~\ref{lem:onecol_supnorm_burnin} with $B=10$ and $\eta=\delta/(2n)$. If $U$ is Haar on $\mathbb{S}^{n-1}$, then for $t\geq c_0n\log(n/\delta)$ we may couple $Q_t^\top e_a$ and $U$ so that the coupling error has probability at most $\delta/(2n)$. On the complement of this error event,
\[
\|P_{V_0}Q_t^\top e_a\|^2
\leq 2\|P_{V_0}U\|^2+2n^{-20}.
\]
By Lemma~\ref{lem:haar_projection_tail}, with $s=\log(2n/\delta)$,
\[
\mathbb{P}\left[\|P_{V_0}U\|^2>C\frac{k+\log(2n/\delta)}{n}\right]\leq \frac{\delta}{2n}.
\]
Increasing $C$ absorbs the $2n^{-20}$ term. Thus the desired row bound fails for this fixed $a$ with probability at most $\delta/n$. A union bound over $a\in[n]$ completes the proof.
\end{proof}

\subsection{Contraction Over Epochs}

We combine the conditional contraction estimate in Lemma \ref{LemmaLocalCont}, the deterministic non-expansion in Lemma \ref{lem:one_step_stability}, and the burn-in estimate in Lemma \ref{lem:kcol_rowmax_burnin} to obtain a local contraction estimate for a power of the transition kernel. The parameter choices are collected in the following definition.

\begin{defn}
\label{def:C4_epoch}
Set $\delta_{\mathrm{step}} = n^{-30}$. Define $M$ by
\[
M = C\,\frac{k+\log\big(2n/\delta_{\mathrm{step}}\big)}{n},
\]
where $C>0$ is chosen large enough for Lemma \ref{lem:kcol_rowmax_burnin}, and let $c_{\mathrm{loc}}>0$ be the universal constant from Lemma \ref{LemmaLocalCont}. Define
\[
\kappa = \frac{c_{\mathrm{loc}}}{Mn^2}.
\]
Finally, set the epoch lengths
\[
T_{\mathrm{burn}} = \left\lceil c_0 n\log\big(2n/\delta_{\mathrm{step}}\big) \right\rceil,
\qquad
T_{\mathrm{contr}} = \left\lceil \frac{20}{\kappa}\,\log n\right\rceil,
\qquad
T_{\mathrm{epoch}} = T_{\mathrm{burn}}+T_{\mathrm{contr}}.
\]
\end{defn}

\begin{lemma}\label{lem:epoch_contraction_v2_refactored}
Fix notation as in Definition~\ref{def:C4_epoch}, let $P$ be the one-step transition kernel of the $k$-column walk, and let $W_D$ denote Wasserstein distance with cost $D$. There is a universal constant $\rho_{\mathrm{loc}}>0$, independent of $n$ and $k$, such that $\rho_{\mathrm{loc}}\leq \min(1,\rho_D)$ and the following holds. If $X_0,Y_0\in \mathrm{St}(n,k)$ satisfy $D(X_0,Y_0)\leq \rho_{\mathrm{loc}}$, then
\be\label{eq:epoch_boundary_contraction}
W_D\left(P^{T_{\mathrm{epoch}}}(X_0,\cdot),P^{T_{\mathrm{epoch}}}(Y_0,\cdot)\right)
\le (1+C_D\rho_{\mathrm{loc}})\Big((1-\kappa)^{T_{\mathrm{contr}}}+\delta_{\mathrm{epoch}}\Big)D(X_0,Y_0),
\ee
where $\delta_{\mathrm{epoch}}=T_{\mathrm{contr}}\delta_{\mathrm{step}}$. In particular, for all $n > N_{0}$ sufficiently large,
\be \label{eq:epoch_boundary_contraction_final_conclusion}
W_D\left(P^{T_{\mathrm{epoch}}}(X_0,\cdot),P^{T_{\mathrm{epoch}}}(Y_0,\cdot)\right)
\le n^{-1}D(X_0,Y_0).
\ee
\end{lemma}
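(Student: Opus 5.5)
We will build the coupling in three stages: an independent burn-in, a contraction phase run with the Lemma \ref{LemmaLocalCont} coupling, and a final conversion from Frobenius to Riemannian distance.

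\textbf{Coupling.} Run the two $k$-column chains from $X_0,Y_0$ with the coupling of Lemma \ref{LemmaLocalCont} at every one of the $T_{\mathrm{epoch}}$ steps. The angle shift $\widehat\alpha_{ij}$ is a deterministic function of the current pair, so each coordinate is marginally a $k$-column walk. Set $\mathcal L_t=\|X_{T_{\mathrm{burn}}+t}-Y_{T_{\mathrm{burn}}+t}\|$ for $0\le t\le T_{\mathrm{contr}}$. Take the filtration to be the natural one of the coupled pair, and take $\mathcal G=\sigma(X_0,Y_0)$; the starting points are deterministic, so $\mathcal G$ is trivial. By Lemma \ref{lem:one_step_stability} the Frobenius distance is deterministically non-increasing throughout, including during burn-in. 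Hence $\mathcal L_0\le \|X_0-Y_0\|\le D(X_0,Y_0)=:B$ by \eqref{IneqLocalEquivFrobRiem}, and hypothesis 1 of Lemma \ref{lem:B1_multiplicative} holds. Non-expansion also keeps $\|X-Y\|\le\rho_{\mathrm{loc}}\le1$ at all times, which is the hypothesis needed in Lemma \ref{LemmaLocalCont}.

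\textbf{Good events.} Let $G_t$ be the event that both $\|X_{T_{\mathrm{burn}}+t}\|_{row,\infty}^2\le M$ and $\|Y_{T_{\mathrm{burn}}+t}\|_{row,\infty}^2\le M$. On $G_t$, \eqref{IneqLocalContractFrob} gives conditional contraction by the factor $1-c_{\mathrm{loc}}/(Mn^2)=1-\kappa$, which is hypothesis 2. For hypothesis 3, each marginal is a $k$-column walk from a deterministic start at time $T_{\mathrm{burn}}+t\ge c_0n\log(2n/\delta_{\mathrm{step}})$. Applying Lemma \ref{lem:kcol_rowmax_burnin} with $\delta=\delta_{\mathrm{step}}/2$ to each chain and taking a union bound gives $\mathbb P(G_t^c)\le\delta_{\mathrm{step}}$; the choice of $M$ in Definition \ref{def:C4_epoch} is exactly the threshold in that lemma. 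The key point is that these bounds use only the marginals, so they are unaffected by the correlation introduced by the coupling. Lemma \ref{lem:B1_multiplicative} then yields
\[
\mathbb E\|X_{T_{\mathrm{epoch}}}-Y_{T_{\mathrm{epoch}}}\|\le\big((1-\kappa)^{T_{\mathrm{contr}}}+\delta_{\mathrm{epoch}}\big)D(X_0,Y_0).
\]

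\textbf{Conversion to $D$ and final constants.} Since $\|X_{T_{\mathrm{epoch}}}-Y_{T_{\mathrm{epoch}}}\|\le\rho_{\mathrm{loc}}\le\rho_D$ deterministically, \eqref{IneqLocalUpperFrobRiem} gives the pathwise bound $D\le(1+C_D\rho_{\mathrm{loc}})\|\cdot\|$. Taking expectations, and using that the coupling upper-bounds $W_D$, proves \eqref{eq:epoch_boundary_contraction}. For \eqref{eq:epoch_boundary_contraction_final_conclusion}, note that $(1-\kappa)^{T_{\mathrm{contr}}}\le e^{-20\log n}=n^{-20}$. Also $\kappa^{-1}=Mn^2/c_{\mathrm{loc}}=O(n(k+\log n))$, so $T_{\mathrm{contr}}=O(n^2\log n)$ and hence $\delta_{\mathrm{epoch}}=O(n^{-28}\log n)$. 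With, say, $\rho_{\mathrm{loc}}=\min(1,\rho_D)$, the prefactor is a constant, so for large $n$ the bound is at most $n^{-1}$. This also gives $T_{\mathrm{epoch}}\le Cn(k+\log n)\log n$, the bound used in Corollary \ref{CorWhatWeNeedD}.

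\textbf{Main obstacle.} The delicate step is checking hypothesis 3 honestly: the bad-event probability must be controlled at every step of the contraction phase, not only at the end of burn-in. Using marginal laws at each fixed time together with a union bound over the $T_{\mathrm{contr}}$ steps handles this, and it is why $\delta_{\mathrm{step}}$ is taken polynomially tiny.
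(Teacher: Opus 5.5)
Your proposal is correct and follows the paper's proof essentially step for step: the same coupling run for all $T_{\mathrm{epoch}}$ steps, Lemma~\ref{lem:B1_multiplicative} applied to the shifted Frobenius process with $B=D(X_0,Y_0)$ and trivial $\mathcal G$, the marginal burn-in bound with $\delta=\delta_{\mathrm{step}}/2$ and a union bound over the two chains, and the pathwise conversion via \eqref{IneqLocalUpperFrobRiem}. The one thing to fix is the phrase ``an independent burn-in'' in your opening sentence, which contradicts your body; the body is right that the non-expansive coupling must also run during burn-in, since that is what guarantees $\mathcal L_0\le D(X_0,Y_0)\le\rho_{\mathrm{loc}}$.
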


\begin{proof}
We construct a coupling realizing the stated bound. Starting from $X_0,Y_0$, use the coupling defined in Equations \eqref{EqCoupling}--\eqref{EqAlphaOptimal} at every step. Let $\mathscr F_t$ be the natural filtration and set
\[
\mathcal{L}_t=\|X_t-Y_t\|.
\]
By \eqref{IneqLocalEquivFrobRiem} and the assumption on $D(X_0,Y_0)$,
\[
\mathcal{L}_0\leq D(X_0,Y_0)\leq \rho_{\mathrm{loc}}\leq 1.
\]
Lemma \ref{lem:one_step_stability} gives the deterministic non-expansion
\be\label{EqLocalEpochNonExpansion}
\mathcal{L}_{t+1}\leq \mathcal{L}_t
\qquad \text{for every }t.
\ee
In particular, $\mathcal{L}_t\leq \rho_{\mathrm{loc}}$ for the whole epoch, so the local contraction lemma remains applicable whenever the row-norm parameter is at most $M$.

For $t\geq T_{\mathrm{burn}}$, define
\[
G_t(M)=\left\{\max\big(\|X_t\|_{row,\infty}^2,\|Y_t\|_{row,\infty}^2\big)\leq M\right\}.
\]
By Lemma \ref{lem:kcol_rowmax_burnin}, the Markov property, and a union bound over the two endpoints of the chains,
\be\label{EqEndpointRowNormGood}
\mathbb{P}(G_t(M)^c)\leq \delta_{\mathrm{step}}
\qquad \text{for every }t\geq T_{\mathrm{burn}}.
\ee
On $G_t(M)$, Lemma \ref{LemmaLocalCont} gives
\be\label{EqLocalEpochGoodStep}
\mathbb{E}[\mathcal{L}_{t+1}\mid \mathscr F_t]\leq (1-\kappa)\mathcal{L}_t.
\ee
The burn-in portion is deterministically non-expanding, so $\mathcal{L}_{T_{\mathrm{burn}}}\leq D(X_0,Y_0)$. Apply Lemma \ref{lem:B1_multiplicative} to the shifted process $\mathcal{L}_{T_{\mathrm{burn}}+u}$, $u=0,\ldots,T_{\mathrm{contr}}$, with $B=D(X_0,Y_0)$, the trivial initial sigma-field, and events $G_{T_{\mathrm{burn}}+u}(M)$. Using \eqref{EqLocalEpochNonExpansion}--\eqref{EqLocalEpochGoodStep}, we obtain
\[
\mathbb{E}[\mathcal{L}_{T_{\mathrm{epoch}}}]
\leq \Big((1-\kappa)^{T_{\mathrm{contr}}}+\delta_{\mathrm{epoch}}\Big)D(X_0,Y_0).
\]
Because $\mathcal{L}_{T_{\mathrm{epoch}}}\leq \rho_{\mathrm{loc}}\leq \rho_D$ deterministically, the local metric comparison \eqref{IneqLocalUpperFrobRiem} gives
\[
D(X_{T_{\mathrm{epoch}}},Y_{T_{\mathrm{epoch}}})
\leq (1+C_D\rho_{\mathrm{loc}})\mathcal{L}_{T_{\mathrm{epoch}}}.
\]
Taking expectations proves \eqref{eq:epoch_boundary_contraction}.

Finally, $T_{\mathrm{contr}}=\lceil20\kappa^{-1}\log n\rceil$ gives $(1-\kappa)^{T_{\mathrm{contr}}}\leq n^{-20}$. Since $M=O((k+\log n)/n)$ and $1\leq k<n$, we have $T_{\mathrm{contr}}=O(n(k+\log n)\log n)\leq O(n^2\log n)$, so $\delta_{\mathrm{epoch}}=o(n^{-10})$. The prefactor $1+C_D\rho_{\mathrm{loc}}$ is a fixed constant, and \eqref{eq:epoch_boundary_contraction_final_conclusion} follows for all sufficiently large $n$. The same estimates also show $T_{\mathrm{epoch}}\leq Cn(k+\log n)\log n$ after increasing $C$.
\end{proof}

\section{Representation-Theoretic Variance Bounds for Haar Polynomials}
\label{SecRepTheoryVariance}

We prove the main representation-theory result for any compact group and then later
transfer it to $\So{n}$. We also provide context in the broader representation-theory literature. 

We recall the following definition (see \citep[Definition 2.2]{Sepan}):
\begin{defn}[Isotypic component]
Let $V$ be a unitary representation of $G$.
For an irreducible representation $E_\pi$, define $V[\pi]$ to be the largest
subspace of $V$ that is a direct sum of irreducible subrepresentations
equivalent to $E_\pi$. The sub-module $V[\pi]$
is called the $\pi$-isotypic component of $V$.
\end{defn}

We also recall the following result (see \citep[Theorem 2.24]{Sepan}): \anote{We don't define some words, like ``intertwining." I think that's probably OK, but maybe we should acknowledge it.]}
\begin{theorem}[Canonical Decomposition] \label{thm:cd}
Let $V$ be a finite-dimensional representation of a compact Lie group $G$.

\begin{enumerate}
\item There is a $G$-intertwining isomorphism
\[
\iota_\pi :
\mathrm{Hom}_G(E_\pi, V)\otimes E_\pi
\;\xrightarrow{\ \sim\ }\;
V[\pi]
\]
induced by mapping $T \otimes v \mapsto T(v)$ for $T \in \mathrm{Hom}_G(E_\pi, V)$ and $v \in V$.
In particular, the multiplicity of $\pi$ is
\[
m_\pi = \dim \mathrm{Hom}_G(E_\pi, V).
\]

\item There is a $G$-intertwining isomorphism
\[
\bigoplus_{[\pi]\in \widehat{G}} \mathrm{Hom}_G(E_\pi, V)\otimes E_\pi
\;\xrightarrow{\ \sim\ }\;
V
=
\bigoplus_{[\pi]\in \widehat{G}} V[\pi].
\]
\end{enumerate}
\end{theorem}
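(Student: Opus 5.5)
We will prove Theorem~\ref{thm:cd} in the standard way: first establish part (1) for each fixed irreducible $E_\pi$, then assemble the isotypic components to get part (2). Throughout, fix a $G$-invariant inner product on $V$; this exists by averaging over the normalized Haar measure of the compact group $G$. Consequently $V$ is completely reducible, since the orthogonal complement of a subrepresentation is again a subrepresentation.

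\textbf{Part (1).} Consider the map $\iota_\pi(T\otimes v)=T(v)$. Let $G$ act trivially on $\mathrm{Hom}_G(E_\pi,V)$ and by $\pi$ on $E_\pi$.
\begin{enumerate}
\item \emph{$\iota_\pi$ is a well-defined intertwiner.} It is bilinear, hence defined on the tensor product. It is $G$-equivariant because $T(\pi(g)v)=gT(v)$ for every $T\in\mathrm{Hom}_G(E_\pi,V)$.
\item \emph{The image is $V[\pi]$.} Each nonzero $T$ is injective by Schur's lemma, since $E_\pi$ is irreducible and $\ker T$ is a subrepresentation. So $T(E_\pi)$ is a copy of $E_\pi$ inside $V$, and the image lies in $V[\pi]$. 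Conversely, $V[\pi]$ is a sum of irreducible subrepresentations $W\cong E_\pi$, and each such $W$ equals $T(E_\pi)$ for some $T$. Hence $\iota_\pi$ maps onto $V[\pi]$.
\item \emph{Injectivity.} Decompose $V[\pi]=W_1\oplus\cdots\oplus W_m$ with each $W_i\cong E_\pi$, and fix isomorphisms $T_i:E_\pi\to W_i$. Composing any $T$ with the projections onto the $W_i$ and applying Schur's lemma, $T=\sum_i c_iT_i$; this uses that $\mathrm{End}_G(E_\pi)$ is one-dimensional. Here we assume the complex (or absolutely irreducible) setting. In the real case $\mathrm{End}_G(E_\pi)$ may be a division algebra $D$, and the tensor product must be taken over $D$; we would note this caveat. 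It follows that $\{T_i\}$ is a basis of $\mathrm{Hom}_G(E_\pi,V)$. Then $\iota_\pi\bigl(\sum_i T_i\otimes v_i\bigr)=\sum_i T_i(v_i)$ vanishes only if every $v_i=0$, because the sum $\bigoplus_i W_i$ is direct.
\item \emph{Multiplicity.} The previous step also shows $m_\pi=\dim\mathrm{Hom}_G(E_\pi,V)$.
\end{enumerate}

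\textbf{Part (2).} By complete reducibility, $V$ is a direct sum of irreducibles. Grouping these by isomorphism class shows that $V=\sum_\pi V[\pi]$, with only finitely many nonzero terms. It remains to show the sum is direct. Suppose the sum were not direct. Then some $V[\pi]$ meets $\sum_{\pi'\neq\pi}V[\pi']$ nontrivially, and this intersection is a subrepresentation. Choose an irreducible summand $W$ of the intersection. Projecting $W$ onto the irreducible summands of the two sides and applying Schur's lemma forces $W\cong E_\pi$ and $W\cong E_{\pi'}$ for some $\pi'\neq\pi$, which is a contradiction. Finally, combining the maps $\iota_\pi$ from part (1) gives the stated isomorphism $\bigoplus_\pi \mathrm{Hom}_G(E_\pi,V)\otimes E_\pi\to V$.

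\textbf{Main obstacle.} The only delicate point is the Schur's-lemma step in part (1)(3) that identifies $\mathrm{Hom}_G(E_\pi,V)$ with a space of dimension $m_\pi$. This requires $\mathrm{End}_G(E_\pi)$ to be the scalars, which holds in the complex setting. For real representations, as needed later for $\So{n}$, we would either complexify or cite \citep{Sepan} directly.
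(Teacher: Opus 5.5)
Your proposal is correct. The paper does not prove Theorem~\ref{thm:cd} itself; it cites \citep[Theorem~2.24]{Sepan}, and your argument is the standard textbook proof of that result: complete reducibility via an invariant inner product, Schur's lemma showing that $\{T_i\}$ is a basis of $\mathrm{Hom}_G(E_\pi,V)$ (which gives injectivity and $m_\pi=\dim\mathrm{Hom}_G(E_\pi,V)$), and directness of $\sum_\pi V[\pi]$. Your caveat that $\mathrm{End}_G(E_\pi)$ must be the scalars matches how the paper uses the result, since it is applied only to the complexified representation $U_{\mathbb{C}}$ in Theorem~\ref{thm:compact-group-variance}.
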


\begin{theorem}[Compact-group variance inequality]
\label{thm:compact-group-variance}
Let $G$ be a compact Lie group, let $\mu$ be Haar probability measure on $G$, and let
$U$ be a finite-dimensional real inner-product space. Let
\[
\rho:G\to \End(U_\C)
\]
be the complexification of an orthogonal representation of $G$ on $U$.
For $T\in \End(U_\C)$ define
\[
\Phi(T)(g):=\langle T,\rho(g)\rangle_{\HS},
\qquad
\Psi(T):=\Phi(T)-\int_G \Phi(T)(h)\,d\mu(h).
\]
Let
\[
\mathcal C:=\{T\in \End(U_\C): \Phi(T)\text{ is constant on }G\}.
\]
Then
\[
\|\Psi(T)\|_{L^2(G,\mu)}^2
\ge
\frac{1}{\dim U}\,\| \mathrm{Proj}_{\mathcal C^\perp}T\|_{\HS}^2.
\]
\end{theorem}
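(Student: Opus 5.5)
The plan is to make everything explicit via the Canonical Decomposition (Theorem~\ref{thm:cd}) and then compare two quadratic forms, one from Schur orthogonality and one from an orthogonal projection. Since $\rho$ is the complexification of an orthogonal representation, it is unitary on $U_\C$ with $\dim_\C U_\C=\dim U$. Choosing orthonormal bases of each multiplicity space $\mathrm{Hom}_G(E_\pi,U_\C)$ and unitary models of each $E_\pi$, Theorem~\ref{thm:cd} becomes a \emph{unitary} $G$-isomorphism
\[
U_\C\cong\bigoplus_{[\pi]}\C^{m_\pi}\otimes E_\pi,\qquad \rho(g)=\bigoplus_{\pi} I_{m_\pi}\otimes\pi(g).
\]
Unitarity is needed so that the Hilbert--Schmidt inner product is preserved. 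Conjugating $T$ by this isomorphism, write $T$ in block form. Only the diagonal blocks $T_\pi\in\End(\C^{m_\pi}\otimes E_\pi)$ pair nontrivially with $\rho(g)$.

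Writing $A_\pi:=\mathrm{Tr}_{\C^{m_\pi}}(T_\pi)\in\End(E_\pi)$ for the partial trace over the multiplicity factor, we get
\[
\Phi(T)(g)=\sum_{\pi}\langle A_\pi,\pi(g)\rangle_{\HS}.
\]
By Schur orthogonality (matrix coefficients of inequivalent irreducibles are orthogonal in $L^2(G,\mu)$, and $\int|\langle A,\pi(g)\rangle|^2\,d\mu=\|A\|_{\HS}^2/d_\pi$ with $d_\pi=\dim E_\pi$), the mean of $\Phi(T)$ is exactly the trivial-isotypic term. Hence
\[
\|\Psi(T)\|_{L^2}^2=\sum_{\pi\neq\mathbf 1}\frac{\|A_\pi\|_{\HS}^2}{d_\pi}.
\]
The same computation identifies $\mathcal C$: $\Phi(T)$ is constant if and only if $A_\pi=0$ for every nontrivial $\pi$. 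So $\mathcal C$ is the kernel of the linear map $L:T\mapsto(A_\pi)_{\pi\neq\mathbf 1}$, and in particular it is a linear subspace.

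Next, compute $\mathrm{Proj}_{\mathcal C^\perp}T$. The set $\mathcal C^\perp$ is the range of $L^*$, and the adjoint of the partial trace is $B\mapsto I_{m_\pi}\otimes B$, placed in the diagonal $\pi$-block. Therefore $\mathcal C^\perp=\bigoplus_{\pi\neq\mathbf 1}\{I_{m_\pi}\otimes B:B\in\End(E_\pi)\}$. Projecting $T_\pi$ onto $\{I_{m_\pi}\otimes B\}$ gives $I_{m_\pi}\otimes(A_\pi/m_\pi)$, whose squared HS norm is $\|A_\pi\|_{\HS}^2/m_\pi$. All other blocks, including the off-diagonal and trivial-isotypic ones, lie in $\mathcal C$. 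Thus
\[
\|\mathrm{Proj}_{\mathcal C^\perp}T\|_{\HS}^2=\sum_{\pi\neq\mathbf 1}\frac{\|A_\pi\|_{\HS}^2}{m_\pi}.
\]

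The inequality then reduces term by term to $\frac1{d_\pi}\ge\frac1{m_\pi\dim U}$, that is, $d_\pi\le m_\pi\dim U$. This holds trivially because $d_\pi\le m_\pi d_\pi\le\dim U$ whenever $m_\pi\ge1$; terms with $m_\pi=0$ are absent. I expect the main obstacle to be bookkeeping rather than an estimate. One must make sure the decomposition isomorphism is unitary, so that HS norms transfer correctly. One must also handle the conjugate-linearity convention in $\langle\cdot,\cdot\rangle_{\HS}$ consistently, so that Schur orthogonality applies with the factor $1/d_\pi$. Finally, one must check carefully that the orthogonal complement of $\ker L$ is exactly the range of the partial-trace adjoint within each isotypic block.
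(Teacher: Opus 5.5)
Your proposal is correct and follows essentially the same route as the paper: the canonical decomposition $U_{\mathbb{C}}\cong\bigoplus_\lambda V_\lambda\otimes M_\lambda$, partial traces $B_\lambda$ over the multiplicity spaces, Schur orthogonality giving $\|\Psi(T)\|_{L^2}^2=\sum_{\lambda\neq\mathrm{triv}}\|B_\lambda\|_{\mathrm{HS}}^2/d_\lambda$, the identification $\mathcal C^\perp=\bigoplus_{\lambda\neq\mathrm{triv}}\{B\otimes I_{M_\lambda}\}$ giving $\|\mathrm{Proj}_{\mathcal C^\perp}T\|_{\mathrm{HS}}^2=\sum_{\lambda\neq\mathrm{triv}}\|B_\lambda\|_{\mathrm{HS}}^2/m_\lambda$, and a termwise comparison using $m_\lambda\ge1$ and $d_\lambda\le\dim U$. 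The differences are cosmetic: you explicitly require the decomposition isomorphism to be unitary so that Hilbert--Schmidt norms transfer, which the paper leaves implicit. You also obtain the mean from Schur orthogonality against the trivial representation, rather than from the paper's left-invariance argument that $\int_G\rho_\lambda\,d\mu=0$ for nontrivial $\lambda$.
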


\begin{proof}
By the canonical decomposition in Theorem \ref{thm:cd},
the complexified representation admits a $G$-equivariant decomposition
\[
U_\C \;\cong\; \bigoplus_{\lambda\in\widehat G}
V_\lambda \otimes M_\lambda,
\qquad
M_\lambda := \mathrm{Hom}_G(V_\lambda, U_\C),
\]
where $V_\lambda$ runs over irreducible representations and $M_\lambda$
is the multiplicity space.

We now describe the action of $\rho(g)$ under this identification. By Theorem~\ref{thm:cd}, the isomorphism is given by
\[
\iota_\lambda(v \otimes T) = T(v), \qquad v\in V_\lambda,\ T\in M_\lambda.
\]
Since $T$ is $G$-equivariant, for every $g\in G$,
\[
\rho(g)\,T(v) = T(\rho_\lambda(g)v).
\]
Therefore
\[
\rho(g)\,\iota_\lambda(v \otimes T)
=
\iota_\lambda\bigl(\rho_\lambda(g)v \otimes T\bigr),
\]
which shows that, under the above identification,
\[
\rho(g)\big|_{V_\lambda\otimes M_\lambda}
=
\rho_\lambda(g)\otimes I_{M_\lambda}.
\]
Hence
\[
\rho(g)\;\cong\;\bigoplus_{\lambda\in\widehat G}\rho_\lambda(g)\otimes I_{M_\lambda}.
\]
Write
\[
d_\lambda:=\dim V_\lambda,\qquad m_\lambda:=\dim M_\lambda.
\]
With respect to this decomposition, every
\[
T\in \End(U_\C)
\]
has block form
\[
T=(T_{\lambda\kappa}),
\qquad
T_{\lambda\kappa}\in \mathrm{Hom}(V_\lambda\otimes M_\lambda,\;V_\kappa\otimes M_\kappa).
\]
Since $\rho(g)$ is block diagonal, only the diagonal blocks contribute to
$\langle T,\rho(g)\rangle_{\HS}$:
\[
\Phi(T)(g)
=
\sum_{\lambda\in\widehat G}
\bigl\langle T_{\lambda\lambda},\,\rho_\lambda(g)\otimes I_{M_\lambda}\bigr\rangle_{\HS}.
\]
For each $\lambda$, define a linear map
\[
L_\lambda:\End(V_\lambda\otimes M_\lambda)\to \End(V_\lambda)
\]
as follows. If $\{f_a\}$ is an orthonormal basis of $M_\lambda$, set
\begin{equation} \label{eqn:LL}
\langle L_\lambda(A)u,v\rangle
:=
\sum_a \langle A(u\otimes f_a),\,v\otimes f_a\rangle,
\qquad u,v\in V_\lambda.
\end{equation}
Thus $L_\lambda(A) = \mathrm{Trace}_{M_\lambda}(A)$ is 
the partial trace over \(M_\lambda\).
It is independent of the choice of orthonormal basis of $M_\lambda$.
Set
\[
B_\lambda:=L_\lambda(T_{\lambda\lambda})\in \End(V_\lambda).
\]

We now compute the contribution of the $\lambda$-block to $\Phi(T)$.
Fix orthonormal bases $\{e_i\}$ of $V_\lambda$ and $\{f_a\}$ of $M_\lambda$. Then
\begin{align*}
\bigl\langle T_{\lambda\lambda},\,\rho_\lambda(g)\otimes I_{M_\lambda}\bigr\rangle_{\HS}
&=
\sum_{i,a}
\bigl\langle T_{\lambda\lambda}(e_i\otimes f_a),\,(\rho_\lambda(g)e_i)\otimes f_a\bigr\rangle \\
&=
\sum_{i,j,a}
\bigl\langle T_{\lambda\lambda}(e_i\otimes f_a),\,e_j\otimes f_a\bigr\rangle
\bigl\langle \rho_\lambda(g)e_i,e_j\bigr\rangle \\
&=
\sum_{i,j}
\langle B_\lambda e_i,e_j\rangle
\bigl\langle \rho_\lambda(g)e_i,e_j\bigr\rangle \\
&=
\langle B_\lambda,\rho_\lambda(g)\rangle_{\HS}.
\end{align*}
Hence
\[
\Phi(T)(g)=\sum_{\lambda\in\widehat G}\langle B_\lambda,\rho_\lambda(g)\rangle_{\HS}.
\]
Next we identify the centered part. We have
\[
\int_G \Phi(T)(h)\,d\mu(h)
=
\sum_{\lambda\in\widehat G}
\left\langle B_\lambda,\int_G \rho_\lambda(h)\,d\mu(h)\right\rangle_{\HS}
=
\sum_{\lambda\in\widehat G}\langle B_\lambda,A_\lambda\rangle_{\HS}
\]
with
\[
A_\lambda:=\int_G \rho_\lambda(h)\,d\mu(h)\in \End(V_\lambda).
\]
Since $A_\lambda$ commutes with $\rho_\lambda(g)$ for every $g\in G$, Schur's lemma
gives $A_\lambda=c_\lambda I_{V_\lambda}$ for some scalar $c_\lambda$.

If $\lambda$ is nontrivial, then
\[
A_\lambda:=\int_G \rho_\lambda(h)\,d\mu(h)=0.
\]
Indeed, for any $g\in G$, by left invariance of Haar measure,
\[
\rho_\lambda(g)A_\lambda
=
\int_G \rho_\lambda(g)\rho_\lambda(h)\,d\mu(h)
=
\int_G \rho_\lambda(gh)\,d\mu(h)
=
\int_G \rho_\lambda(h)\,d\mu(h)
=
A_\lambda.
\]
Thus every vector in $\operatorname{Im}(A_\lambda)$ is fixed by $G$, so
\[
\operatorname{Im}(A_\lambda)\subseteq V_\lambda^G,
\]
where $V_\lambda^G:=\{v\in V_\lambda:\rho_\lambda(g)v=v\ \forall g\in G\}$.
Since $V_\lambda$ is irreducible and nontrivial, we have $V_\lambda^G=\{0\}$,
and therefore $A_\lambda=0$.

If $\lambda=\mathrm{triv}$, then $\rho_\lambda(h)=I_{V_\lambda}$ for all $h$, so $A_\lambda=I_{V_\lambda}$.
Therefore
\[
\int_G \Phi(T)(h)\,d\mu(h)
=
\langle B_{\mathrm{triv}},I_{V_{\mathrm{triv}}}\rangle_{\HS},
\]
and thus
\begin{equation} \label{eqn:psiexp}
\Psi(T)(g)
=
\sum_{\lambda\neq \mathrm{triv}}\langle B_\lambda,\rho_\lambda(g)\rangle_{\HS}.
\end{equation}
By Schur orthogonality (see \citep[Theorem~3.7]{Sepan}),
\[
\int_G
\langle B,\rho_\lambda(g)\rangle_{\HS}\,
\overline{\langle C,\rho_\eta(g)\rangle_{\HS}}
\,d\mu(g)
=
\sum_{i,j,k,\ell}
B_{ij}\,\overline{C_{k\ell}}
\int_G
\overline{(\rho_\lambda(g))_{ij}}\,
(\rho_\eta(g))_{k\ell}
\,d\mu(g)
\]
\[
=
\delta_{\lambda\eta}\,\frac{1}{d_\lambda}
\sum_{i,j} B_{ij}\,\overline{C_{ij}}
=
\delta_{\lambda\eta}\,\frac{1}{d_\lambda}\,\langle B,C\rangle_{\HS}.
\]

Applying this to the above expression for $\Psi(T)$ in \eqref{eqn:psiexp} gives
\begin{equation}\label{eqn:psi1}
\|\Psi(T)\|_{L^2(G,\mu)}^2
=
\sum_{\lambda\neq \mathrm{triv}}\frac{1}{d_\lambda}\,\|B_\lambda\|_{\HS}^2.
\end{equation}

We now compute the orthogonal projection onto $\mathcal C^\perp$.
First observe from Equation \eqref{eqn:LL} that for any $A\in\End(V_\lambda\otimes M_\lambda)$ and
$B\in\End(V_\lambda)$,
\[
\langle A,\;B\otimes I_{M_\lambda}\rangle_{\HS}
=
\langle L_\lambda(A),\,B\rangle_{\HS}.
\]
Therefore the Hilbert--Schmidt adjoint of $L_\lambda$ is
\[
L_\lambda^*(B)=B\otimes I_{M_\lambda},
\]
and hence
\[
(\ker L_\lambda)^\perp
=
\{B\otimes I_{M_\lambda}:B\in\End(V_\lambda)\}.
\]
We also have
\[
L_\lambda\!\left(\frac1{m_\lambda}B_\lambda\otimes I_{M_\lambda}\right)
=
\frac1{m_\lambda}\,m_\lambda\,B_\lambda
=
B_\lambda,
\]
so
\[
T_{\lambda\lambda}-\frac1{m_\lambda}B_\lambda\otimes I_{M_\lambda}\in \ker L_\lambda,
\]
while
\[
\frac1{m_\lambda}B_\lambda\otimes I_{M_\lambda}\in(\ker L_\lambda)^\perp.
\]
Hence
\[
\mathrm{Proj}_{(\ker L_\lambda)^\perp}(T_{\lambda\lambda})
=
\frac1{m_\lambda}B_\lambda\otimes I_{M_\lambda}.
\]
Now we finally identify $\mathcal C$.
From
\[
\Phi(T)(g)=\sum_{\lambda}\langle B_\lambda,\rho_\lambda(g)\rangle_{\HS},
\]
the function $\Phi(T)$ is constant on $G$ if and only if
\[
B_\lambda=0\qquad  \forall \lambda\neq \mathrm{triv}.
\]
Thus $\mathcal C$ consists exactly of those operators whose nontrivial diagonal
blocks lie in $\mathrm{ker} L_\lambda$, together with arbitrary off-diagonal blocks and an
arbitrary trivial-isotypic diagonal contribution. Consequently,
\[
\mathcal C^\perp
=
\bigoplus_{\lambda\neq \mathrm{triv}}
\{B\otimes I_{M_\lambda}:B\in\End(V_\lambda)\}.
\]
Therefore
\[
\mathrm{Proj}_{\mathcal C^\perp}T
=
\bigoplus_{\lambda\neq\mathrm{triv}}
\frac1{m_\lambda}B_\lambda\otimes I_{M_\lambda}.
\]

Taking Hilbert--Schmidt norms,
\begin{align}
\|\mathrm{Proj}_{\mathcal C^\perp}T\|_{\HS}^2
&=
\sum_{\lambda\neq\mathrm{triv}}
\left\|
\frac1{m_\lambda}B_\lambda\otimes I_{M_\lambda}
\right\|_{\HS}^2 =
\sum_{\lambda\neq \mathrm{triv}}
\frac1{m_\lambda^2}\,
\|B_\lambda\|_{\HS}^2\,
\|I_{M_\lambda}\|_{\HS}^2 \\
&=
\sum_{\lambda\neq\mathrm{triv}}
\frac1{m_\lambda}\,\|B_\lambda\|_{\HS}^2, \label{eqn:psiproj}
\end{align}
since $\|I_{M_\lambda}\|_{\HS}^2=m_\lambda$.

From \eqref{eqn:psi1} and \eqref{eqn:psiproj} we obtain
\begin{align}\label{eqn:psikey}
\|\Psi(T)\|_{L^2(G,\mu)}^2
&=
\sum_{\substack{\lambda\neq\mathrm{triv}\\ m_\lambda>0}}
\frac{m_\lambda}{d_\lambda}\,\frac1{m_\lambda}\|B_\lambda\|_{\HS}^2 \\
&\ge
\Big(\min_{\substack{\lambda\neq\mathrm{triv}\\ m_\lambda>0}}
\frac{m_\lambda}{d_\lambda}\Big)
\sum_{\substack{\lambda\neq\mathrm{triv}\\ m_\lambda>0}}
\frac1{m_\lambda}\|B_\lambda\|_{\HS}^2
=
\Big(\min_{\substack{\lambda\neq\mathrm{triv}\\ m_\lambda>0}}
\frac{m_\lambda}{d_\lambda}\Big)
\|\mathrm{Proj}_{\mathcal C^\perp}T\|_{\HS}^2.
\end{align}
Equality holds if and only if \(B_\lambda=0\) for every nontrivial \(\lambda\)
with
\[
\frac{m_\lambda}{d_\lambda}>
\min_{\substack{\eta\neq\mathrm{triv}\\ m_\eta>0}}
\frac{m_\eta}{d_\eta}.
\]
Finally, since $d_\lambda\le \dim U$ and $m_\lambda\ge 1$, this becomes
\[
\|\Psi(T)\|_{L^2(G,\mu)}^2
\ge
\frac1{\dim U}\,\|\mathrm{Proj}_{\mathcal C^\perp}T\|_{\HS}^2
\]
and the proof is done.
\end{proof}
\subsection{Equality cases and the Peter--Weyl--Plancherel picture} \label{SecEqualCases}

\anote{AMS: This section has some interesting sentences, but there are no theorem statements and no equations that get referenced. Should we delete it? If not, I think it would be helpful to say more explicitly what the target audience is supposed to get out of it. I DO think that it should be possible to extract the calculations that somebody like Alex Wein would be most interested in.]}

The proof of Theorem~\ref{thm:compact-group-variance} gives more than the universal
lower bound: it yields the exact identities 
\begin{equation}\label{eq:sharp-comparison}
\|\Psi(T)\|_{L^2(G,\mu)}^2
=
\sum_{\lambda\neq\mathrm{triv}}
\frac{1}{d_\lambda}\,\|B_\lambda\|_{\HS}^2,
\qquad
\|\mathrm{Proj}_{\mathcal C^\perp}T\|_{\HS}^2
=
\sum_{\lambda\neq\mathrm{triv}}
\frac{1}{m_\lambda}\,\|B_\lambda\|_{\HS}^2,
\end{equation}
where
\[
d_\lambda=\dim V_\lambda,
\qquad
m_\lambda=\dim M_\lambda.
\]

Equation \eqref{eq:sharp-comparison} is the only result in Subsection \ref{SecEqualCases} that is required for the main result in this paper, and so the remainder can be safely skipped on a first read-through. We include the rest of the subsection because we suspect that the general representation-theoretic estimates in this appendix may be of use to other statisticians working on the low-degree-polynomial heuristic, and we hope that the remarks in this section may help in generalizing our bounds beyond this specific problem.

Continuing from Equation \eqref{eq:sharp-comparison}, we note that the sharp constant produced by the argument is
\[
\kappa(U):=
\min_{\substack{\lambda\neq\mathrm{triv}\\ m_\lambda>0}}
\frac{m_\lambda}{d_\lambda},
\]
and we have
\[
\|\Psi(T)\|_{L^2(G,\mu)}^2
\ge
\kappa(U)\,\|\mathrm{Proj}_{\mathcal C^\perp}T\|_{\HS}^2.
\]
Equality holds if and only if
\[
B_\lambda=0
\qquad
\text{for every nontrivial }\lambda
\text{ such that }
\frac{m_\lambda}{d_\lambda}>\kappa(U).
\]
Thus the equality case is governed exactly by the multiplicity ratios
\[
\frac{m_\lambda}{d_\lambda}.
\]

The universal estimate in Theorem~\ref{thm:compact-group-variance} is obtained by
replacing \(\kappa(U)\) with the crude lower bound
\[
\kappa(U)\ge \frac{1}{\dim U},
\]
which follows from \(m_\lambda\ge 1\) and \(d_\lambda\le \dim U\).
This constant is sharp. Indeed, if \(U_\C\) is irreducible and nontrivial, then the
only nontrivial isotypic component has \(m_\lambda=1\) and \(d_\lambda=\dim U\), so
\[
\|\Psi(T)\|_{L^2(G,\mu)}^2
=
\frac{1}{\dim U}\,\|T\|_{\HS}^2
=
\frac{1}{\dim U}\,\|\mathrm{Proj}_{\mathcal C^\perp}T\|_{\HS}^2
\]
for every \(T\in \End(U_\C)\).

A second important equality pattern occurs when
\[
m_\lambda=d_\lambda
\qquad
\text{for every nontrivial irreducible }\lambda
\text{ occurring in }U_\C.
\]
Then the two identities in \eqref{eq:sharp-comparison} coincide term by term, and
hence
\[
\|\Psi(T)\|_{L^2(G,\mu)}^2
=
\|\mathrm{Proj}_{\mathcal C^\perp}T\|_{\HS}^2.
\]
This is exactly the multiplicity pattern of the regular representation, and it is at
this point that the present finite-dimensional argument meets classical harmonic
analysis.

Indeed, the passage from our finite-dimensional setting to \(L^2(G)\) is precisely
the passage from the finite-dimensional canonical decomposition
\citep[Theorem~2.24]{Sepan} to its Hilbert-space analogue
\citep[Theorem~3.19]{Sepan}.  By the Peter--Weyl theorem
\citep[Theorem~3.25 and Corollary~3.26]{Sepan},
\[
L^2(G)\cong \widehat{\bigoplus}_{[\pi]\in\widehat G} E_\pi^*\otimes E_\pi
\]
as a \(G\times G\)-module. In particular,
\[
\mathrm{Hom}_G(E_\pi,L^2(G))\cong E_\pi^*,
\qquad
m_\pi=\dim E_\pi^*=\dim E_\pi=d_\pi.
\]
Thus every irreducible \(E_\pi\) occurs in the regular representation with
multiplicity equal to its dimension.

Under the natural identification
\[
E_\pi^*\otimes E_\pi \cong \End(E_\pi),
\]
the Peter--Weyl decomposition becomes the operator-valued Fourier decomposition of
\(L^2(G)\).  In this form, the Plancherel theorem
\citep[Theorem~3.38]{Sepan} states that
\[
\|f\|_{L^2(G)}^2
=
\sum_{[\pi]\in\widehat G}
d_\pi\,\|\pi(f)\|_{\HS}^2.
\]
This is the infinite-dimensional analogue of the first identity in
\eqref{eq:sharp-comparison}: the quantities \(B_\lambda\) in our proof play the role
of operator-valued Fourier coefficients, and the weights \(1/d_\lambda\) are exactly
the Plancherel weights.

Equivalently, by Schur orthogonality, for every finite subset
\(F\subset \widehat G\),
\[
\left\|
\sum_{[\pi]\in F}
\langle B_\pi,\pi(\cdot)\rangle_{\HS}
\right\|_{L^2(G)}^2
=
\sum_{[\pi]\in F}
\frac{1}{d_\pi}\,\|B_\pi\|_{\HS}^2.
\]
This is precisely the Parseval identity for matrix coefficients.  In other words, one
of the identities produced by our argument is exactly the finite-dimensional
Peter--Weyl--Plancherel computation restricted to the coefficient function
\[
g\mapsto \Phi(T)(g)=\langle T,\rho(g)\rangle_{\HS}.
\]
The ratio \(m_\lambda/d_\lambda\) then measures how far the given representation
\(U_\C\) is from the regular representation: when \(m_\lambda=d_\lambda\) for all
relevant \(\lambda\), our comparison becomes termwise identical to the Plancherel
identity.

In the Abelian case \(G=S^1\), every irreducible representation is one-dimensional,
so \(d_\pi=1\) for all \(\pi\), and the Peter--Weyl--Plancherel picture reduces to the
usual Fourier series and Parseval identity on the circle; see
\citep[Theorem~3.43]{Sepan}.  Thus Theorem~\ref{thm:compact-group-variance}
should be viewed as a finite-dimensional shadow of classical harmonic analysis, with
the regular-representation equality case recovering exactly the classical
Peter--Weyl--Plancherel mechanism.

\subsection{Application to Polynomial Moments}
Let $X$ be Haar-distributed on $\So{n}$ and let $\mu=\haarOdist_{n}$ denote Haar probability measure.
All expectations $\mathbb{E}[\cdot]$ are with respect to $\mu$.

Fix an integer $d\ge 1$.
Let $\mathbb{R}^{[n]^d\times[n]^d}$ denote the space of real arrays
$T=(T_{\alpha,\beta})_{\alpha,\beta\in[n]^d}$, where
$\alpha=(\alpha_1,\dots,\alpha_d)$ and $\beta=(\beta_1,\dots,\beta_d)$.
For $T$ in this space define the homogeneous polynomial observable
\begin{equation}\label{eq:eval-map}
f_T(X) = \mathrm{Ev}_d(T)(X)
:=
\sum_{\alpha,\beta\in[n]^d} T_{\alpha,\beta}\prod_{t=1}^d X_{\alpha_t \beta_t}.
\end{equation}
Equip $\mathbb{R}^{[n]^d\times[n]^d}$ with the Frobenius inner product
\[
\langle T,S\rangle_F := \sum_{\alpha,\beta} T_{\alpha,\beta} S_{\alpha,\beta},
\qquad
\|T\|_F^2:=\langle T,T\rangle_F.
\]

Let
\[
\mathcal C_d:=\{T:\ \mathrm{Ev}_d(T)\text{ is constant $\mu$-a.s.}\}.
\]

Equivalently, $T\in\mathcal C_d$ iff $\Var(\mathrm{Ev}_d(T)(X))=0$.
Let $T^\sharp$ denote the orthogonal projection of $T$ onto $\mathcal C_d^\perp$.
Then
\begin{equation}\label{eq:centering}
\mathbb{E}[\mathrm{Ev}_d(T^\sharp)(X)]=0,
\qquad
\Var(\mathrm{Ev}_d(T)(X))=\Var(\mathrm{Ev}_d(T^\sharp)(X)).
\end{equation}

\begin{corollary}[Homogeneous case]
For homogeneous $T\in \mathbb{R}^{[n]^d\times [n]^d}$ define
$f_T(X)=\mathrm{Ev}_d(T)(X)$ as in \eqref{eq:eval-map}, and let $T^\sharp$ be the
Frobenius-orthogonal projection of $T$ onto $\mathcal C_d^\perp$. Then
\begin{equation}\label{eq:rep-homog-lb}
\Var(f_T(X)) \;\ge\; n^{-d}\,\|T^\sharp\|_F^2.
\end{equation}
\end{corollary}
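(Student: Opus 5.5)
The plan is to apply Theorem~\ref{thm:compact-group-variance} with $G=\So{n}$, $U=(\mathbb{R}^n)^{\otimes d}$ and $\rho(g)=g^{\otimes d}$, the $d$-fold tensor power of the defining representation. This is orthogonal, and $\dim U=n^d$.

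First, I identify arrays with operators. An array $T\in\mathbb{R}^{[n]^d\times[n]^d}$ is the operator on $U$ with matrix entries $T_{\alpha,\beta}$ in the product basis $e_{\alpha}=e_{\alpha_1}\otimes\cdots\otimes e_{\alpha_d}$. Since $(g^{\otimes d})_{\alpha,\beta}=\prod_t g_{\alpha_t\beta_t}$, we get
\[
\langle T,\rho(g)\rangle_{\HS}=\sum_{\alpha,\beta}T_{\alpha,\beta}\prod_{t=1}^d g_{\alpha_t\beta_t}=\mathrm{Ev}_d(T)(g)=\Phi(T)(g).
\]
The Frobenius inner product on arrays is the real Hilbert--Schmidt inner product on $\End(U)$.

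Next, I pass between real and complex operators. Theorem~\ref{thm:compact-group-variance} is stated on $\End(U_\C)$. The set $\mathcal C$ there is a complex subspace, and $\rho(g)$ is real. Hence $\mathcal C$ is closed under complex conjugation, because $\Phi(\overline T)=\overline{\Phi(T)}$ and conjugation preserves constancy. It follows that $\mathcal C=\mathcal C_d\otimes\C$ with $\mathcal C_d=\mathcal C\cap\End(U)$, and that the HS-orthogonal projection onto $\mathcal C^\perp$ commutes with conjugation. So for real $T$, $\mathrm{Proj}_{\mathcal C^\perp}T$ is real and equals the real projection $T^\sharp$ onto $\mathcal C_d^\perp$. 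The complex and real Frobenius norms agree on real arrays.

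Then the variance follows directly. For real $T$, $\Psi(T)=f_T-\mathbb{E}f_T$, so $\|\Psi(T)\|_{L^2(\mu)}^2=\Var(f_T(X))$. Theorem~\ref{thm:compact-group-variance} gives
\[
\Var(f_T(X))\ge \frac{1}{n^d}\,\|T^\sharp\|_F^2,
\]
which is \eqref{eq:rep-homog-lb}.

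The only step needing care is the identification in the second paragraph: checking that the complex constant-space and its orthogonal complement are the complexifications of the real ones, so that $\mathrm{Proj}_{\mathcal C^\perp}T=T^\sharp$. Everything else is bookkeeping.
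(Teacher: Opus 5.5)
Your proposal is correct and follows the paper's proof. Both apply Theorem~\ref{thm:compact-group-variance} to $\rho(g)=g^{\otimes d}$ on $U=(\mathbb{R}^n)^{\otimes d}$, identify $\mathrm{Ev}_d(T)$ with $\Phi(T)$ via the product-basis matrix entries of $g^{\otimes d}$, and use $\dim U=n^d$. Your conjugation-invariance argument, showing $\mathcal C=\mathcal C_d\otimes\mathbb{C}$ so that the complex projection of a real $T$ is the real projection $T^\sharp$, makes rigorous a step the paper only asserts as ``$\mathcal C_d$ coincides with $\mathcal C$''.
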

\begin{proof}
Let
\[
U:=(\R^n)^{\otimes d},
\qquad
G=\So{n},
\]
and let
\[
\rho:G\to \End(U_\C)
\]
be the complexification of the tensor-power representation
$
\rho(g)=g^{\otimes d}.
$
We identify the coefficient array
\[
T=(T_{\alpha,\beta})_{\alpha,\beta\in[n]^d}\in \R^{[n]^d\times[n]^d}
\]
with the corresponding linear operator on \(U\) in the standard tensor-product basis:
if
\[
e_\alpha:=e_{\alpha_1}\otimes\cdots\otimes e_{\alpha_d},
\qquad
\alpha\in[n]^d,
\]
then \(T\) acts by
\[
T e_\beta=\sum_{\alpha\in[n]^d} T_{\alpha,\beta} e_\alpha.
\]
Under this identification, the Frobenius inner product on
\(\R^{[n]^d\times[n]^d}\) coincides with the Hilbert--Schmidt inner product on
\(\End(U)\), and hence
\[
\|T\|_F=\|T\|_{\HS}.
\]

Now let \(X\in \So{n}\). In the basis \(\{e_\alpha\}_{\alpha\in[n]^d}\), the matrix entries of
\(\rho(X)=X^{\otimes d}\) are
\[
\bigl(X^{\otimes d}\bigr)_{\alpha,\beta}
=
\prod_{t=1}^d X_{\alpha_t\beta_t}.
\]
Therefore
\[
\langle T,\rho(X)\rangle_{\HS}
=
\sum_{\alpha,\beta\in[n]^d} T_{\alpha,\beta}\bigl(X^{\otimes d}\bigr)_{\alpha,\beta}
=
\sum_{\alpha,\beta\in[n]^d} T_{\alpha,\beta}\prod_{t=1}^d X_{\alpha_t\beta_t}
=
\mathrm{Ev}_d(T)(X).
\]

Since $\mathrm{Ev}_d(T)(g)=\langle T,\rho(g)\rangle_{\HS}=\Phi(T)(g)$,
the subspace $\mathcal C_d$ coincides with $\mathcal C$, and hence
$T^\sharp$ is the orthogonal projection of $T$ onto $\mathcal C^\perp$.

By applying Theorem~\ref{thm:compact-group-variance} with this choice of
\(G\), \(U\), and \(\rho\), we obtain
\[
\Var(f_T(X)) = \Var(\mathrm{Ev}_d(T)(X))
=
\|\Psi(T)\|_{L^2(G,\mu)}^2
\ge
\frac{1}{\dim U}\,\|T^\sharp\|_F^2
=
n^{-d}\,\|T^\sharp\|_F^2
\]
 and the proof is finished. 
 \end{proof}
 \subsection{Inhomogeneous polynomials via grading}

Let $f$ be a polynomial function on $\So{n}$ that admits a representative of degree at most $d$
in the entries of $X$. Choose one such representative and decompose it as
\[
f = \sum_{k=0}^d f_k,
\]
where each $f_k$ is homogeneous of degree $k$.
(As a function on $\So{n}$, this decomposition need not be unique; here we fix one representative.)

To treat all degrees simultaneously, we introduce a graded representation.
Let
\[
\mathcal U:=\bigoplus_{k=0}^d (\R^n)^{\otimes k},
\qquad
\rho(g):=\bigoplus_{k=0}^d g^{\otimes k},
\qquad g\in \So{n}.
\]
Each summand $(\R^n)^{\otimes k}$ corresponds to the chosen homogeneous representative of
degree $k$, and the direct sum allows us to encode polynomials of mixed degree
as a single linear functional.

Arguing as in the homogeneous case, each $f_k$ can be written
as
\[
f_k(g)=\langle T_k,\,g^{\otimes k}\rangle_{\HS}
\]
for some $T_k\in \End((\R^n)^{\otimes k})$.
Setting
\[
T:=\bigoplus_{k=0}^d T_k \;\in\; \End(\mathcal U),
\]
we obtain the representation
\begin{equation}\label{eq:inhom-eval}
F_T(g)=\langle T,\rho(g)\rangle_{\HS}.
\end{equation}

Define, in direct analogy with the homogeneous case,
\[
\mathcal C_{\le d}
:=
\{T\in \End(\mathcal U): \langle T,\rho(g)\rangle_{\HS}
\text{ is constant $\mu$-a.s.}\}.
\]
Equivalently, $T\in \mathcal C_{\le d}$ if and only if
\[
\Var(\langle T,\rho(X)\rangle_{\HS})=0.
\]
Let
\begin{equation}\label{EqStandardizingProjection}
T^\sharp := \operatorname{Proj}_{\mathcal C_{\le d}^\perp}(T).
\end{equation}
Then \(T-T^\sharp \in \mathcal C_{\le d}\), so \(\langle T-T^\sharp,\rho(g)\rangle_{HS}\) is constant
\(\mu\)-a.s. Moreover,
\[
\mathbb{E}\big[\langle T^\sharp,\rho(X)\rangle_{HS}\big]=0.
\]
Hence
\begin{equation}
F_T(X)-\mathbb{E}[F_T(X)]
=
\langle T^\sharp,\rho(X)\rangle_{HS}
\qquad \mu\text{-a.s.}
\end{equation}
and therefore
\[
\Var(F_T(X))
=
\Var\!\big(\langle T^\sharp,\rho(X)\rangle_{HS}\big).
\]

\begin{corollary}[Inhomogeneous case]
\label{cor:inhomogeneous-case}\label{thm:inhom-rep}
Let $F_T$ be a polynomial in the entries of $X\in \So{n}$ of degree at most $d$,
and represent it as in \eqref{eq:inhom-eval}. Then
\[
\Var(F_T(X))
\ge
\frac{1}{(d+1)n^d}\,\|T^\sharp\|_{\HS}^2.
\]
\end{corollary}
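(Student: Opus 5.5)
The plan is to apply Theorem~\ref{thm:compact-group-variance} directly to the graded representation, with $G=\So{n}$, $U=\mathcal U=\bigoplus_{k=0}^d(\R^n)^{\otimes k}$ and $\rho(g)=\bigoplus_{k=0}^d g^{\otimes k}$. First we check the hypotheses. $\rho$ is the complexification of an orthogonal representation on the real inner-product space $\mathcal U$, since each $g^{\otimes k}$ is orthogonal on $(\R^n)^{\otimes k}$ and an orthogonal direct sum of orthogonal representations is orthogonal. By \eqref{eq:inhom-eval}, $F_T(g)=\langle T,\rho(g)\rangle_{\HS}=\Phi(T)(g)$. Hence the subspace $\mathcal C$ of that theorem coincides with $\mathcal C_{\le d}$, and $\mathrm{Proj}_{\mathcal C^\perp}T=T^\sharp$ by \eqref{EqStandardizingProjection}.

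One small point needs care: $\mathcal C$ and its orthogonal complement are taken in $\End(\mathcal U_\C)$, whereas $T$ is real. Since $\rho(g)$ is real, $\Phi(\overline{T})=\overline{\Phi(T)}$, so $\mathcal C$ is closed under conjugation. Therefore the projection of a real $T$ is again real and agrees with the real projection. Also, $\Psi(T)=F_T-\mathbb{E}F_T$, so $\|\Psi(T)\|_{L^2(\mu)}^2=\Var(F_T(X))$.

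The theorem then gives
\[
\Var(F_T(X))\ \ge\ \frac{1}{\dim\mathcal U}\,\|T^\sharp\|_{\HS}^2 .
\]
It remains to bound $\dim\mathcal U=\sum_{k=0}^d n^k$. Using $n^k\le n^d$ for $n\ge1$ gives $\dim\mathcal U\le (d+1)n^d$, which yields the stated inequality.

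I expect no serious obstacle here. The only step that needs care is the real/complex identification of the projection, together with the fact that the representation of $F_T$ depends on the chosen homogeneous representatives. That choice is harmless, because the statement concerns the $T$ built from the fixed representative, and the projection onto $\mathcal C^\perp$ removes exactly the constant-on-$G$ ambiguity. A sharper constant is available if wanted. Since $d_\lambda\le\max_k n^k=n^d$ for every irreducible component (each irreducible sits inside a single $(\R^n)^{\otimes k}_\C$) and $m_\lambda\ge1$, the proof of Theorem~\ref{thm:compact-group-variance} through \eqref{eqn:psikey} actually gives $n^{-d}$, which is stronger than needed.
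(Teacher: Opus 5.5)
Your proof is correct and follows the paper's argument exactly: you apply Theorem~\ref{thm:compact-group-variance} to the graded representation $\rho(g)=\bigoplus_{k=0}^d g^{\otimes k}$, identify $\mathcal C$ with $\mathcal C_{\le d}$ so that the projection is $T^\sharp$, and bound $\dim\mathcal U=\sum_{k=0}^d n^k\le(d+1)n^d$. Two of your points go beyond the paper and are both valid: the check that the complex projection of a real $T$ is real, and the remark that $d_\lambda\le n^d$, because each irreducible embeds in a single summand, which improves the constant to $n^{-d}$.
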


\begin{proof}
We apply Theorem~\ref{thm:compact-group-variance} with
\[
G=\So{n},
\qquad
U=\mathcal U,
\qquad
\rho(g)=\bigoplus_{k=0}^d g^{\otimes k}.
\]
By construction \eqref{eq:inhom-eval}, we have
\[
F_T(g)=\Phi(T)(g)=\langle T,\rho(g)\rangle_{\HS}.
\]
Moreover, the space $\mathcal C_{\le d}$ defined above coincides with the space
$\mathcal C$ in Theorem~\ref{thm:compact-group-variance}, so that
$T^\sharp=\mathrm{Proj}_{\mathcal C^\perp}(T)$.

Therefore
\[
\Var(F_T(X))
=
\|\Psi(T)\|_{L^2(G,\mu)}^2
\ge
\frac{1}{\dim \mathcal U}\,\|T^\sharp\|_{\HS}^2.
\]
The dimension bounds follow from
\[
\dim \mathcal U=\sum_{k=0}^d n^k\le (d+1)n^d
\]
and the proof is finished.
\end{proof}

\section{Proof of Theorem \ref{thm:kac-jl-transform}} \label{SecProofFastJL}

We begin by setting some additional notation. For a polynomial $p$ in the matrix entries, write \be \label{EqJLSupNormDef}
\|p\|_{\infty,\So{n}}=\sup_{Q\in\So{n}}|p(Q)|.
\ee
Given parameters $d_{\mathrm{ph}},L>0$, say that a probability measure $\mu$ on $\So{n}$ is \emph{$(d_{\mathrm{ph}},L)$-pseudo-Haar in supnorm} if, for every real polynomial $p$ in the matrix entries with $\deg(p)\leq d_{\mathrm{ph}}$ and $\|p\|_{\infty,\So{n}}\leq1$,
\be \label{EqJLPseudoHaarDef}
\left|\int p\,d\mu-\int p\,d\haarOdist_{n}\right|\leq n^{-L}.
\ee

Our main estimate is the following proof that being pseudo-Haar is good enough to obtain the usual Johnson-Lindenstrauss estimate:

\begin{theorem} \label{thm:jl-pseudohar-transfer}
There is a universal constant $C<\infty$ with the following property. Let $Q$ be an $\So{n}$-valued random matrix whose law is $(d_{\mathrm{ph}},L)$-pseudo-Haar in the sense of \eqref{EqJLPseudoHaarDef}. Let $\mathcal X\subset\mathbb{R}^n$ have $N\geq2$ points, and let $0<\varepsilon,\delta<1$. Set
\be \label{EqJLPHRChoice}
r=\left\lceil C\log\frac{N}{\delta}\right\rceil.
\ee
Assume
\be \label{EqJLPHAssumptions}
4r\leq d_{\mathrm{ph}},\qquad
\ell\geq C\varepsilon^{-2}r,
\qquad
L\log n\geq \log\frac{2\binom{N}{2}}{\delta}+2r\log\frac{n}{\varepsilon\ell}.
\ee
Then, with probability at least $1-\delta$ over $Q$,
\be \label{EqJLPHConclusion}
(1-\varepsilon)\|x_i-x_j\|_2^2
\leq
\|A_Q(x_i-x_j)\|_2^2
\leq
(1+\varepsilon)\|x_i-x_j\|_2^2
\ee
for every pair $1\leq i<j\leq N$.
\end{theorem}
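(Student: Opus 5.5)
The plan is a moment-method proof in the spirit of Dasgupta--Gupta, with exponential moments replaced by an even moment of degree at most $d_{\mathrm{ph}}$. Fix $v\in\mathcal V_{\mathcal X}$ and consider the polynomial $p_v(Q)=Z_v(Q)^{2r}$. By \eqref{EqJLZvDef}, $Z_v$ has degree $2$ in the entries of $Q$, so $p_v$ has degree $4r\le d_{\mathrm{ph}}$. Since $0\le \frac n\ell\sum_{a\le\ell}\langle e_a,Qv\rangle^2\le \frac n\ell$, we have $|Z_v|\le n/\ell$ on $\So{n}$. Hence $\|p_v\|_{\infty,\So{n}}\le (n/\ell)^{2r}$, and $p_v(n/\ell)^{-2r}$ is an admissible test function in \eqref{EqJLPseudoHaarDef}. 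This gives
\[
\mathbb{E}[Z_v(Q)^{2r}]\le \mathbb{E}_{\haarOdist_{n}}[Z_v^{2r}]+\Big(\frac n\ell\Big)^{2r}n^{-L}.
\]
Markov's inequality then gives $\mathbb{P}[|Z_v(Q)|>\varepsilon]\le \varepsilon^{-2r}\mathbb{E}[Z_v(Q)^{2r}]$. A union bound over the at most $\binom N2$ vectors $v$ finishes the argument once each of the two resulting terms is at most $\delta/(2\binom N2)$.

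The error term is handled directly by the third assumption in \eqref{EqJLPHAssumptions}. We have $\varepsilon^{-2r}(n/\ell)^{2r}n^{-L}=\exp\big(2r\log\frac{n}{\varepsilon\ell}-L\log n\big)$, and by that assumption this is at most $\delta/(2\binom N2)$.

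The main work is the Haar moment bound, which is where I expect the obstacle to lie. Under Haar measure, $Qv$ is uniform on $\mathbb S^{n-1}$, so $Z_v+1\stackrel{d}{=}\frac n\ell\|P_\ell Y\|^2$ with $Y$ uniform on the sphere. Thus $\|P_\ell Y\|^2\sim\mathrm{Beta}(\ell/2,(n-\ell)/2)$ has mean $\ell/n$, and the claim reduces to a centered Beta moment estimate of the form
\[
\mathbb{E}\Big[\Big(\tfrac n\ell\|P_\ell Y\|^2-1\Big)^{2r}\Big]\le \Big(C_0\sqrt{r/\ell}+C_0 r/\ell\Big)^{2r},
\]
i.e.\ sub-gamma moment growth with variance proxy of order $1/\ell$. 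This is presumably the content of the referenced Lemma \ref{lem:jl_haar_moment}, which I would assume. If I had to prove it, I would write $Y=Z/\|Z\|$ with $Z$ Gaussian and use the independence of $\|Z\|$ and $Z/\|Z\|$, which gives $\mathbb{E}\|P_\ell Z\|^{2m}=\mathbb{E}\|Z\|^{2m}\,\mathbb{E}\|P_\ell Y\|^{2m}$. Binomially expanding the centered moment then reduces it to $\chi^2$ moment ratios, which avoids the awkward normalization entirely. A cruder alternative is to split on the event $\|Z\|^2\ge n/2$ and use standard $\chi^2_\ell$ moment bounds.

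Given the moment bound, set $r=\lceil C\log(N/\delta)\rceil$ and $\ell\ge C\varepsilon^{-2}r$ with $C$ large, and assume $\varepsilon<1$. Then $C_0\sqrt{r/\ell}+C_0r/\ell\le \varepsilon/e$, so the Markov bound for the Haar term is at most $e^{-2r}$. Choosing $C$ large makes $e^{-2r}\le \delta/(2\binom N2)$, since $\log\binom N2\le 2\log N$.

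Adding the two contributions and taking the union bound over the pairs, every $v\in\mathcal V_{\mathcal X}$ satisfies $|Z_v(Q)|\le\varepsilon$ with probability at least $1-\delta$. By \eqref{EqJLPairDistortion}, this is exactly the conclusion \eqref{EqJLPHConclusion}.
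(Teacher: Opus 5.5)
Your proposal is correct and follows the paper's proof essentially step for step. You normalize $Z_v^{2r}$ by $(n/\ell)^{2r}$ to apply the pseudo-Haar bound, apply Markov's inequality and a union bound, let the third assumption absorb the transfer error, and handle the main term with the Haar moment bound of Lemma \ref{lem:jl_haar_moment} once $\ell\geq C\varepsilon^{-2}r$; the paper proves that lemma by integrating Beta/spherical-cap tail bounds rather than by your $\chi^2$-ratio expansion, and your sub-gamma form $(C_0\sqrt{r/\ell}+C_0r/\ell)^{2r}$ is equivalent to its $(C_0r/\ell)^r$ in the regime $r\leq c_0\ell$ that this assumption guarantees.
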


The proof uses the following standard Haar moment estimate. We will use essentially the same argument as in  \cite{DasguptaGupta2003ElementaryProofJL}, except that we replace exponential moments with merely the moderate-degree polynomial moments that are actually required.

\begin{lemma}[Haar moment bound] \label{lem:jl_haar_moment}
There are universal constants $C_0,c_0>0$ such that, for every unit vector $v\in \mathbb{S}^{n-1}$ and every integer $1\leq r\leq c_0\ell$,
\be \label{EqJLHaarMoment}
\mathbb{E}_{H\sim\haarOdist_{n}}|Z_v(H)|^{2r}\leq\left(C_0\frac{r}{\ell}\right)^r.
\ee
\end{lemma}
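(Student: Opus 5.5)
The plan is to reduce the Haar moment to a statement about a uniform point on the sphere and then to Gaussians. For $H\sim\haarOdist_{n}$ and a fixed unit vector $v$, the vector $u=Hv$ is uniform on $\mathbb{S}^{n-1}$. Hence
\[
Z_v(H)=\frac{n}{\ell}\sum_{a=1}^{\ell}u_a^2-1,
\]
and this depends only on the law of $u$. I will represent $u=g/\|g\|_2$ with $g\sim N(0,I_n)$, as in the proof sketch of Lemma~\ref{lem:haar_sphere_supnorm}. Write $A=\sum_{a\le\ell}g_a^2\sim\chi^2_\ell$ and $B=\sum_{a>\ell}g_a^2\sim\chi^2_{n-\ell}$. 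Then $\sum_{a\le\ell}u_a^2=A/(A+B)$ has the $\mathrm{Beta}(\ell/2,(n-\ell)/2)$ law, and the task is a bound on the central $2r$-th moment of $\frac{n}{\ell}\mathrm{Beta}$.

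\textbf{Step 1 (split the deviation).} I will write
\[
Z_v=\frac{n}{A+B}\left(\frac{A}{\ell}-1\right)+\left(\frac{n}{A+B}-1\right).
\]
By $(x+y)^{2r}\le 2^{2r-1}(x^{2r}+y^{2r})$ and Cauchy--Schwarz, it suffices to bound three quantities:
\[
\mathbb{E}\left(\frac{n}{A+B}\right)^{4r},\qquad \mathbb{E}\left|\frac{A}{\ell}-1\right|^{4r},\qquad \mathbb{E}\left|\frac{A+B}{n}-1\right|^{2r}\cdot\frac{n^{2r}}{(A+B)^{2r}}.
\]
A cleaner alternative avoids the ratio entirely. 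Since $u$ is independent of $\|g\|$, we have $\mathbb{E}\|g\|^{2q}\,\mathbb{E}f(u)=\mathbb{E}[\|g\|^{2q}f(u)]$ for homogeneous $f$. I would use this if the direct route gets messy: expand $|Z_v|^{2r}$ via $(\frac{n}{\ell}A-\|g\|^2)^{2r}/\|g\|^{4r}$ and divide moments. That is, $\mathbb{E}|Z_v|^{2r}=\mathbb{E}\bigl(\tfrac n\ell A-(A+B)\bigr)^{2r}/\mathbb{E}\|g\|^{4r}$, which holds by independence of direction and radius and by homogeneity of degree $4r$. This is exact and is the route I will commit to.

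\textbf{Step 2 (numerator).} The numerator equals $\mathbb{E}\,W^{2r}$ with
\[
W=\left(\frac{n}{\ell}-1\right)(A-\ell)-(B-(n-\ell)),
\]
a sum of independent centered sub-exponential variables. By the standard moment form of Bernstein's inequality for centered $\chi^2$ variables, $\mathbb{E}|\chi^2_m-m|^{p}\le (C(\sqrt{mp}+p))^{p}$. This gives
\[
\mathbb{E}W^{2r}\le C^{2r}\Bigl[\bigl(\tfrac{n}{\ell}\bigr)^{2r}\bigl((\ell r)^{r}+(2r)^{2r}\bigr)+(nr)^{r}+(2r)^{2r}\Bigr].
\]
Using $r\le c_0\ell$ and $\ell<n$, every term is at most $(C' n^2 r/\ell)^{r}$. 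The dominant term is $(n/\ell)^{2r}(\ell r)^r=(n^2r/\ell)^r$, and $(n/\ell)^{2r}(2r)^{2r}\le (n^2 r/\ell)^r\cdot (4r/\ell)^r$, which is harmless.

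\textbf{Step 3 (denominator).} Here $\mathbb{E}\|g\|^{4r}=\mathbb{E}(\chi^2_n)^{2r}\ge n^{2r}$ by Jensen. Dividing gives $\mathbb{E}|Z_v|^{2r}\le (C_0 r/\ell)^r$, as claimed.

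The main obstacle is justifying the radial–angular factorization for the non-polynomial $|Z_v|^{2r}$. Since the exponent $2r$ is even, the quantity is actually the polynomial $Z_v^{2r}$, homogeneous of degree $0$ in $u$. After multiplying by $\|g\|^{4r}$ it is a polynomial in $g$, so the factorization is legitimate. The remaining care is tracking constants in Bernstein's moment bound so that the condition $r\le c_0\ell$ absorbs the $r^{2r}$ terms.
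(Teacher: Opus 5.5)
Your proof is correct, and it takes a different route from the paper. The paper also starts from the $\mathrm{Beta}(\ell/2,(n-\ell)/2)$ law of $\|\Pi_\ell Hv\|_2^2$. From there it quotes standard spherical-cap tail bounds, $\mathbb{P}(|Z_v(H)|>t)\le 2e^{-a\ell t^2}$ for $t\le 1$ and $\le 2e^{-b\ell t}$ for $t\ge 1$, and integrates them against $2rt^{2r-1}$. The hypothesis $r\le c_0\ell$ is used there to absorb the $(Cr/\ell)^{2r}$ contribution of the exponential tail.

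You instead use the exact identity $\mathbb{E}Z_v^{2r}=\mathbb{E}W^{2r}/\mathbb{E}\|g\|_2^{4r}$. This removes the random denominator altogether. The numerator becomes a moment of a sum of two independent centered $\chi^2$ variables, handled by the Bernstein moment bound, and the denominator is handled by Jensen. In your argument, $r\lesssim \ell$ plays the same role as in the paper, absorbing the $r^{2r}$ sub-exponential terms.

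What your route buys is self-containedness: its only input is the textbook moment bound $\mathbb{E}|\chi^2_m-m|^p\le (C(\sqrt{mp}+p))^p$, whereas the paper invokes the Beta/spherical-cap tail estimates without proof. The paper's route is shorter on the page and yields tail bounds directly, not just moments.

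Two small remarks. First, the factorization you flag as the main obstacle needs no polynomial or homogeneity structure. Since $\|g\|_2$ is independent of $g/\|g\|_2$ and $Z_v$ depends only on the direction, $\mathbb{E}[\|g\|_2^{4r}Z_v^{2r}]=\mathbb{E}\|g\|_2^{4r}\,\mathbb{E}Z_v^{2r}$ holds for any nonnegative measurable function of the direction. Second, your abandoned Step~1 decomposition can simply be dropped.
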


\begin{proof}
By Haar invariance of $\haarOdist_{n}$, the vector $Hv$ is uniformly distributed on $\mathbb{S}^{n-1}$. Therefore $\|\Pi_\ell Hv\|_2^2$ has the same distribution as the sum of the first $\ell$ squared coordinates of a uniform random point on $\mathbb{S}^{n-1}$, equivalently a beta random variable with parameters $\ell/2$ and $(n-\ell)/2$.

The standard beta, or spherical-cap, concentration estimate gives universal constants $a,b>0$ such that
\be \label{EqJLHaarTailSmall}
\mathbb{P}(|Z_v(H)|>t)\leq 2\exp(-a\ell t^2),\qquad 0<t\leq1,
\ee
and
\be \label{EqJLHaarTailLarge}
\mathbb{P}(|Z_v(H)|>t)\leq 2\exp(-b\ell t),\qquad t\geq1.
\ee
Using
\be \label{EqJLTailIntegration}
\mathbb{E} |Z_v(H)|^{2r}=\int_0^\infty 2r t^{2r-1}\mathbb{P}(|Z_v(H)|>t)\,dt,
\ee
we bound the contribution from $0<t\leq1$ by extending the integral to infinity and applying \eqref{EqJLHaarTailSmall}:
\[
\int_0^\infty 4r t^{2r-1}e^{-a\ell t^2}\,dt=2r(a\ell)^{-r}\Gamma(r)\leq \left(C\frac r\ell\right)^r.
\]
Similarly, \eqref{EqJLHaarTailLarge} gives
\[
\int_1^\infty 4r t^{2r-1}e^{-b\ell t}\,dt\leq \left(C\frac r\ell\right)^{2r}\leq \left(C_0\frac r\ell\right)^r,
\]
after choosing $c_0>0$ sufficiently small and $C_0<\infty$ sufficiently large. Combining these estimates with \eqref{EqJLTailIntegration} proves \eqref{EqJLHaarMoment}.
\end{proof}

\begin{prop}[Moment-transfer form] \label{prop:jl_moment_transfer}
Let $Q$ be an $\So{n}$-valued random matrix, let $H\sim\haarOdist_{n}$, and let $\mathcal X=\{x_1,\ldots,x_N\}\subset\mathbb{R}^n$. Let $0<\varepsilon<1$ and let $r$ be a positive integer with $r\leq c_0\ell$, where $c_0$ is the constant in Lemma \ref{lem:jl_haar_moment}. Suppose that, for every $v\in\mathcal V_{\mathcal X}$,
\be \label{EqJLMomentClosenessAlpha}
\left|\mathbb{E} Z_v(Q)^{2r}-\mathbb{E} Z_v(H)^{2r}\right|\leq \alpha.
\ee
Then
\begin{align}
&\mathbb{P}\left(\exists\,i<j:\left|\frac{\|A_Q(x_i-x_j)\|_2^2}{\|x_i-x_j\|_2^2}-1\right|>\varepsilon\right) \notag\\
&\hspace{2cm}\leq
\binom{N}{2}\left[\left(C_0\frac{r}{\varepsilon^2\ell}\right)^r+\frac{\alpha}{\varepsilon^{2r}}\right]. \label{EqJLMomentTransferFailure}
\end{align}
\end{prop}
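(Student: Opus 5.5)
The plan is a union bound over the $\binom{N}{2}$ pairs, combined with Markov's inequality applied to the $2r$-th moment of $Z_v(Q)$ for each $v\in\mathcal V_{\mathcal X}$.

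\textbf{Reduction to a single direction.} By \eqref{EqJLPairDistortion}, the event inside the probability in \eqref{EqJLMomentTransferFailure} is exactly $\bigcup_{v\in\mathcal V_{\mathcal X}}\{|Z_v(Q)|>\varepsilon\}$. Since $|\mathcal V_{\mathcal X}|\leq\binom{N}{2}$ (coincident points and repeated directions only shrink the set), a union bound reduces the claim to showing, for each fixed unit vector $v\in\mathcal V_{\mathcal X}$,
\[
\mathbb{P}(|Z_v(Q)|>\varepsilon)\leq\left(C_0\frac{r}{\varepsilon^2\ell}\right)^r+\frac{\alpha}{\varepsilon^{2r}}.
\]

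\textbf{Single-direction bound.} Because $2r$ is even, $Z_v(Q)^{2r}=|Z_v(Q)|^{2r}\geq0$. Markov's inequality then gives
\[
\mathbb{P}(|Z_v(Q)|>\varepsilon)\leq\varepsilon^{-2r}\,\mathbb{E}Z_v(Q)^{2r}.
\]
The hypothesis \eqref{EqJLMomentClosenessAlpha} lets us replace the $Q$-moment by the Haar moment at cost $\alpha$:
\[
\mathbb{E}Z_v(Q)^{2r}\leq\mathbb{E}Z_v(H)^{2r}+\alpha.
\]
Since $r\leq c_0\ell$, Lemma \ref{lem:jl_haar_moment} bounds the Haar moment by $(C_0r/\ell)^r$. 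Dividing by $\varepsilon^{2r}=(\varepsilon^2)^r$ yields exactly the displayed single-direction bound. Summing over $v\in\mathcal V_{\mathcal X}$ completes the proof.

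There is no real obstacle here. The only points needing care are the following.
\begin{itemize}
\item The even exponent is what makes the signed moment comparison in \eqref{EqJLMomentClosenessAlpha} sufficient; no absolute values on $Z_v(Q)$ are required.
\item The index set of the union bound is $\mathcal V_{\mathcal X}$, which has at most $\binom{N}{2}$ elements.
\item The hypothesis $r\leq c_0\ell$ is exactly what is needed to invoke Lemma \ref{lem:jl_haar_moment}.
\end{itemize}
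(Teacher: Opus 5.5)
Your proposal is correct and matches the paper's proof: Markov's inequality on the $2r$-th moment for each fixed $v\in\mathcal V_{\mathcal X}$, the hypothesis \eqref{EqJLMomentClosenessAlpha} to pass to the Haar moment, Lemma \ref{lem:jl_haar_moment} to bound that moment, and a union bound over at most $\binom{N}{2}$ directions. Your remark that the even exponent lets the signed moment comparison stand in for the absolute moment is a useful point that the paper leaves implicit.
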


\begin{proof}
For fixed $v\in\mathcal V_{\mathcal X}$, Markov's inequality, \eqref{EqJLMomentClosenessAlpha}, and Lemma \ref{lem:jl_haar_moment} give
\[
\mathbb{P}(|Z_v(Q)|>\varepsilon)
\leq \varepsilon^{-2r}\mathbb{E}|Z_v(Q)|^{2r}
\leq \left(C_0\frac{r}{\varepsilon^2\ell}\right)^r+\frac{\alpha}{\varepsilon^{2r}}.
\]
Taking a union bound over the at most $\binom{N}{2}$ vectors in $\mathcal V_{\mathcal X}$ proves \eqref{EqJLMomentTransferFailure}.
\end{proof}

\begin{proof}[Proof of Theorem \ref{thm:jl-pseudohar-transfer}]
Fix $v\in \mathbb{S}^{n-1}$ and define
\be \label{EqJLMomentPolynomial}
P_{v,r}(G)=Z_v(G)^{2r}.
\ee
Since $Z_v(G)$ has degree $2$ in the entries of $G$, \eqref{EqJLMomentPolynomial} has degree at most $4r$. Moreover, for every $Q\in\So{n}$,
\be \label{EqJLZSupBound}
0\leq\|\Pi_\ell Qv\|_2^2\leq1,
\qquad
|Z_v(Q)|=\left|\frac n\ell\|\Pi_\ell Qv\|_2^2-1\right|\leq \frac n\ell.
\ee
Thus
\be \label{EqJLMomentSupNorm}
\|P_{v,r}\|_{\infty,\So{n}}\leq \left(\frac n\ell\right)^{2r}.
\ee
By \eqref{EqJLPHAssumptions}, the normalized polynomial $(\ell/n)^{2r}P_{v,r}$ has degree at most $d_{\mathrm{ph}}$ and supnorm at most $1$. Applying \eqref{EqJLPseudoHaarDef} gives
\be \label{EqJLPHMomentTransfer}
\left|\mathbb{E} P_{v,r}(Q)-\mathbb{E} P_{v,r}(H)\right|\leq n^{-L}\left(\frac n\ell\right)^{2r}.
\ee
Apply Proposition \ref{prop:jl_moment_transfer} with $\alpha=n^{-L}(n/\ell)^{2r}$. After increasing the universal constant $C$ in \eqref{EqJLPHRChoice} and \eqref{EqJLPHAssumptions}, the condition $\ell\geq C\varepsilon^{-2}r$ makes the first term in \eqref{EqJLMomentTransferFailure} at most $\delta/2$. The last inequality in \eqref{EqJLPHAssumptions} makes the second term in \eqref{EqJLMomentTransferFailure} at most $\delta/2$. Therefore the bad event has probability at most $\delta$, and \eqref{EqJLPairDistortion} gives \eqref{EqJLPHConclusion}.
\end{proof}

\begin{proof}[Proof of Theorem \ref{thm:kac-jl-transform}]
Let $H\sim\haarOdist_{n}$ and set $d=4r$. We first verify the moment-transfer hypothesis \eqref{EqJLMomentClosenessAlpha} for $Q=M$.

For a monomial $\mathfrak m$ of degree $m\leq d$, Theorem \ref{LemmaDiffUpperBoundMonomial} and the definition \eqref{eq:omega_def} give, after increasing universal constants if necessary,
\be \label{EqJLKacMonomialDifference}
\left|\mathbb{E}_{M\sim\kacdist_{n,T,x}} \mathfrak m(M)-\mathbb{E}_{H\sim\haarOdist_{n}}\mathfrak m(H)\right|
\leq C d\, n^{-s},
\ee
where
\be \label{EqJLKacSDef}
s=\left\lfloor \frac{T}{C n(d+\log n)\log n}\right\rfloor.
\ee
Indeed, when $m\leq d$, the monomial $\mathfrak m$ depends on at most $d$ columns, and the bound \eqref{EqJLKacMonomialDifference} is the degree-$d$ version of Theorem \ref{LemmaDiffUpperBoundMonomial} with constants enlarged to dominate all lower degrees.

Next expand \eqref{EqJLZvDef} as a polynomial in the entries of $G$:
\be \label{EqJLZCoeffExpansion}
Z_v(G)=\frac n\ell\sum_{a=1}^{\ell}\sum_{b,c=1}^{n}v_bv_cG_{ab}G_{ac}-1.
\ee
Since $\|v\|_1^2\leq n\|v\|_2^2=n$, \eqref{EqJLZCoeffExpansion} gives
\be \label{EqJLZCoeffOneBound}
\|Z_v\|_{\mathrm{coeff},1}\leq 1+n\|v\|_1^2\leq 1+n^2\leq 2n^2.
\ee
Therefore
\be \label{EqJLMomentCoeffOneBound}
\|Z_v^{2r}\|_{\mathrm{coeff},1}\leq (2n^2)^{2r}.
\ee
Combining \eqref{EqJLKacMonomialDifference} and \eqref{EqJLMomentCoeffOneBound}, we obtain, uniformly over $v\in \mathbb{S}^{n-1}$,
\be \label{EqJLKacMomentAlpha}
\left|\mathbb{E} Z_v(M)^{2r}-\mathbb{E} Z_v(H)^{2r}\right|
\leq C r (2n^2)^{2r} n^{-s}.
\ee
By increasing $C_{\mathrm{JL}}$ in \eqref{EqJLKacEllChoice} and \eqref{EqJLKacTimeAssumption}, the condition \eqref{EqJLKacEllChoice} makes $r\leq c_0\ell$ and makes the first term in \eqref{EqJLMomentTransferFailure} at most $\delta/2$. Writing $B=r\log(n/\varepsilon)+\log(N/\delta)$, the time condition \eqref{EqJLKacTimeAssumption} gives $s\log n\geq cC_{\mathrm{JL}}B-\log n$ for a universal $c>0$; since $B\geq\log n$, increasing $C_{\mathrm{JL}}$ absorbs the final $\log n$ loss. Thus \eqref{EqJLKacTimeAssumption}, together with \eqref{EqJLKacMomentAlpha}, makes the second term in \eqref{EqJLMomentTransferFailure} at most $\delta/2$. Applying Proposition \ref{prop:jl_moment_transfer} with $Q=M$ gives
\[
\mathbb{P}\left(\exists\,i<j:\left|\frac{\|A_M(x_i-x_j)\|_2^2}{\|x_i-x_j\|_2^2}-1\right|>\varepsilon\right)
\leq\delta.
\]
Using \eqref{EqJLPairDistortion}, this is exactly \eqref{EqJLKacConclusion}. Pairs with $x_i=x_j$ satisfy \eqref{EqJLKacConclusion} trivially.

If $N/\delta\leq n^A$ and $\varepsilon$ is fixed, then $r=O_A(\log n)$. The conditions \eqref{EqJLKacEllChoice} and \eqref{EqJLKacTimeAssumption} then reduce to $\ell=O_A(\varepsilon^{-2}\log n)$ and $T\geq C_A n\log^3 n$, after increasing $C_A$.
\end{proof}

\end{document}